\documentclass[11pt]{article}
\pdfoutput=1

\usepackage{graphicx}
\usepackage{amsmath}
\usepackage{amssymb,amsfonts}
\usepackage{amsthm}
\usepackage{cite}
\usepackage{bm}
\newcommand{\norm}[1]{\left\lVert#1\right\rVert}

\usepackage{mathrsfs}

\usepackage{amssymb}   
\usepackage{multirow}

\usepackage[margin=1in]{geometry}

\newtheorem{lemma}{Lemma}
\newtheorem{theorem}{Theorem}
\newtheorem{proposition}{Proposition}

\theoremstyle{definition}

\newtheorem{example}{Example}


\usepackage{authblk}

\usepackage{hyperref}
\usepackage[nameinlink]{cleveref}

\title{A Superconvergent HDG Method for Distributed Control of Convection Diffusion PDEs}

\author[1]{Weiwei Hu\thanks{weiwei.hu@okstate.edu}}
\author[2]{Jiguang Shen\thanks{shenx179@umn.edu}}
\author[3]{John R.~Singler\thanks{singlerj@mst.edu}}
\author[3]{Yangwen Zhang\thanks{ywzfg4@mst.edu}}
\author[4]{Xiaobo Zheng\thanks{zhengxiaobosc@yahoo.com}}
\affil[1]{Department of Mathematics, Oklahoma State University, Stillwater, OK}
\affil[2]{School  of Mathematics, University of Minnesota, MN}
\affil[3]{Department of Mathematics and Statistics, Missouri University of Science and Technology, Rolla, MO}
\affil[4]{College of Mathematics, Sichuan University, Chengdu, China}

\begin{document}

\maketitle

\begin{abstract}
	We consider a distributed optimal control problem governed by an elliptic convection diffusion PDE, and propose a hybridizable discontinuous Galerkin (HDG) method to approximate the solution.  We use polynomials of degree $k+1$ and $k \ge 0$ to approximate the state, dual state, and their fluxes, respectively. Moreover, we use polynomials of degree $k$ to approximate the numerical traces of the state and dual state on the faces,  which are the only globally coupled unknowns.  We prove optimal a priori error estimates for all variables when $ k > 0 $.  Furthermore, from the point of view of the number of degrees of freedom of the globally coupled unknowns, this method achieves superconvergence for the state, dual state, and control when $k\geq 1$. We illustrate our convergence results with numerical experiments. 
\end{abstract}

\section{Introduction}
\label{intro}
We consider the following  distributed control problem. Let $\Omega\subset \mathbb{R}^{d} $ $ (d\geq 2)$ be a Lipschitz polyhedral domain  with boundary $\Gamma = \partial \Omega$.  The goal is to minimize
\begin{align}
J(u)=\frac{1}{2}\| y- y_{d}\|^2_{L^{2}(\Omega)}+\frac{\gamma}{2}\|u\|^2_{L^{2}(\Omega)}, \quad \gamma>0, \label{cost1}
\end{align}
subject to
\begin{equation}\label{Ori_problem}
\begin{split}
-\Delta y+\bm \beta\cdot\nabla y&=f+u \quad\text{in}~\Omega,\\
y&=g\qquad\quad\text{on}~\partial\Omega,
\end{split}
\end{equation}
where $ f \in L^2(\Omega) $ and the vector field $\bm{\beta}$ satisfies
\begin{align}\label{beta_con}
\nabla\cdot\bm{\beta} \le 0.
\end{align}
It is well known that this optimal control problem is equivalent to the optimality system
\begin{subequations}\label{eq_adeq}
	\begin{align}
	-\Delta y+\bm \beta\cdot\nabla y &=f+u\quad~\text{in}~\Omega,\label{eq_adeq_a}\\
	y&=g\qquad~~~~\text{on}~\partial\Omega,\label{eq_adeq_b}\\
	-\Delta z-\nabla\cdot(\bm{\beta} z) &=y_d-y\quad~\text{in}~\Omega,\label{eq_adeq_c}\\
	z&=0\qquad\quad~~\text{on}~\partial\Omega,\label{eq_adeq_d}\\
	z-\gamma  u&=0\qquad\quad~~\text{in}~\Omega.\label{eq_adeq_e}
	\end{align}
\end{subequations}

Many different numerical methods have been investigated for this type of problem including approaches based on the finite element method \cite{MR2851444,MR2719819,MR2475653,MR2302057,MR3144702,MR2486088,MR2595051,MR3451479,MR2068903}, mixed finite elements \cite{MR2550371,MR2851444,MR2971662}, and discontinuous Galerkin (DG) methods \cite{MR2595051,MR2587414,MR2773301,MR3416418,MR3149415,MR3022208,MR2644299}.  Also, hybridizable discontinuous Galerkin (HDG) methods have recently been explored for various optimal control problems for the Poisson equation \cite{MR3508834,HuShenSinglerZhangZheng_HDG_Dirichlet_control1} and the above convection diffusion equation \cite{HuShenSinglerZhangZheng_HDG_Dirichlet_control5}.

In this earlier work \cite{HuShenSinglerZhangZheng_HDG_Dirichlet_control5}, we used a hybridizable discontinuous Galerkin (HDG) method to approximate the solution of the optimality system \eqref{eq_adeq}.  We used polynomials of degree $k$ to approximate all variables and obtained optimal convergence rates when $ \bm \beta $ is divergence free.

In this work, we investigate a different HDG method for the above problem and prove that it is superconvergent.  Specifically, we use polynomials of degree $k+1$ to approximate the state $y$ and dual state $z$ and polynomials of degree $k \ge 0$ for the fluxes $\bm q = -\nabla  y $ and $ \bm p = -\nabla z$. Moreover, we only use polynomials of degree $k$ to approximate the numerical traces of the state and dual state on the faces,  which are the only globally coupled unknowns.  We describe the method in Section \ref{sec:background}, and then in Section \ref{sec:analysis} we obtain the a priori error bounds
\begin{align*}
&\norm{y-{y}_h}_{0,\Omega}=O( h^{k+1+\min\{k,1\}} ),\quad  \;\norm{z-{z}_h}_{0,\Omega}=O( h^{k+1+\min\{k,1\}} ),\\
&\norm{\bm{q}-\bm{q}_h}_{0,\Omega} = O( h^{k+1} ),\qquad  \qquad\;\; \norm{\bm{p}-\bm{p}_h}_{0,\Omega} = O( h^{k+1} ),
\end{align*}
and
\begin{align*}
&\norm{u-{u}_h}_{0,\Omega} = O( h^{k+1+\min\{k,1\}} ).
\end{align*}
From the point of view of the global degrees of freedom, we obtain superconvergent approximations to $y$, $z$, and $u$ without postprocessing if $k\ge 1$. We demonstrate the performance of the HDG method with numerical experiments in Section \ref{sec:numerics}.

\section{HDG scheme for the optimal control problem}
\label{sec:background}

We begin with notation and a complete description of the HDG method.

	\subsection{Notation}
	
Throughout this work we adopt the standard notation $W^{m,p}(\Omega)$ for Sobolev spaces on $\Omega$ with norm $\|\cdot\|_{m,p,\Omega}$ and seminorm $|\cdot|_{m,p,\Omega}$. We denote $W^{m,2}(\Omega)$ by $H^{m}(\Omega)$ with norm $\|\cdot\|_{m,\Omega}$ and seminorm $|\cdot|_{m,\Omega}$. We also set $H_0^1(\Omega)=\{v\in H^1(\Omega):v=0 \;\mbox{on}\; \partial \Omega\}$ and $ H(\text{div},\Omega) = \{\bm{v}\in [L^2(\Omega)]^d, \nabla\cdot \bm{v}\in L^2(\Omega)\} $.  We denote the $L^2$-inner products on $L^2(\Omega)$ and $L^2(\Gamma)$ by
\begin{align*}
(v,w) &= \int_{\Omega} vw  \quad \forall v,w\in L^2(\Omega),\\
\left\langle v,w\right\rangle &= \int_{\Gamma} vw  \quad\forall v,w\in L^2(\Gamma).
\end{align*}

Let $\mathcal{T}_h$ be a collection of disjoint elements that partition $\Omega$, and let $\partial \mathcal{T}_h$ be the set $\{\partial K: K\in \mathcal{T}_h\}$. For an element $K  \in \mathcal{T}_h$, let $e = \partial K \cap \Gamma$ denote the boundary face of $ K $ if the $d-1$ Lebesgue measure of $e$ is non-zero. For two elements $K^+$ and $K^-$ in $\mathcal{T}_h$, let $e = \partial K^+ \cap \partial K^-$ denote the interior face between $K^+$ and $K^-$ if the $d-1$ Lebesgue measure of $e$ is non-zero. Let $\varepsilon_h^o$ and $\varepsilon_h^{\partial}$ denote the set of interior and boundary faces, respectively, and let $\varepsilon_h$ be the union of  $\varepsilon_h^o$ and $\varepsilon_h^{\partial}$. Furthermore, we introduce
\begin{align*}
(w,v)_{\mathcal{T}_h} = \sum_{K\in\mathcal{T}_h} (w,v)_K,   \quad\quad\quad\quad\left\langle \zeta,\rho\right\rangle_{\partial\mathcal{T}_h} = \sum_{K\in\mathcal{T}_h} \left\langle \zeta,\rho\right\rangle_{\partial K}.
\end{align*}

Let $\mathcal{P}^k(D)$ denote the set of polynomials of degree at most $k$ on a domain $D$.  We use the discontinuous finite element spaces
\begin{align}
\bm{V}_h  &:= \{\bm{v}\in [L^2(\Omega)]^d: \bm{v}|_{K}\in [\mathcal{P}^k(K)]^d, \forall K\in \mathcal{T}_h\},\\
{W}_h  &:= \{{w}\in L^2(\Omega): {w}|_{K}\in \mathcal{P}^{k+1}(K), \forall K\in \mathcal{T}_h\},\\
{M}_h  &:= \{{\mu}\in L^2(\mathcal{\varepsilon}_h): {\mu}|_{e}\in \mathcal{P}^k(e), \forall e\in \varepsilon_h\}.
\end{align}
%
Let  $M_h(o)$ and $M_h(\partial)$ denote the spaces of discontinuous finite element functions of polynomial degree at most $ k $ defined on the set of interior faces $\varepsilon_h^o$ and boundary faces $\varepsilon_h^{\partial}$, respectively.  For any functions $w\in W_h$ and $\bm r\in \bm V_h$, let $\nabla w$ and $ \nabla \cdot \bm r $ denote the piecewise gradient and divergence on each element $K\in \mathcal T_h$.

	\subsection{The HDG Formulation}

For the HDG method, we consider a mixed formulation of the optimality system \eqref{eq_adeq} and approximate the state $ y $, the dual state $ z $, the fluxes $ \bm q = -\nabla y $ and $ \bm p = -\nabla z $, and the numerical traces of $ y $ and $ z $ on the faces.  The approximate optimal distributed control is found directly using a discrete version of the optimality condition \eqref{eq_adeq_e}.  One important feature of HDG methods is the local solver:  The unknowns corresponding to all variables except the numerical traces can be eliminated locally on each element, which leads to a globally coupled system involving only the coefficients of the numerical traces.  This leads to a reduction in the computational cost.  For more information on HDG methods, see, e.g., \cite{MR2485455,MR2772094,MR2513831,MR2558780,MR2796169,MR3626531,MR3522968,MR3463051,MR3452794,MR3343926}.


The mixed weak form of the optimality system \eqref{eq_adeq_a}-\eqref{eq_adeq_e} is given by
\begin{subequations}\label{mixed}
	\begin{align}
	(\bm q,\bm r_1)-( y,\nabla\cdot \bm r_1)+\langle y,\bm r_1\cdot \bm n\rangle&=0,\label{mixed_a}\\
	(\nabla\cdot(\bm q+\bm \beta y),  w_1)-(\nabla\cdot\bm\beta y,w_1)&= ( f+ u, w_1),  \label{mixed_b}\\
	(\bm p,\bm r_2)-(z,\nabla \cdot\bm r_2)+\langle z,\bm r_2\cdot\bm n\rangle&=0,\label{mixed_c}\\
	(\nabla\cdot(\bm p-\bm \beta z), \bm w_2)&= (y_d- y, w_2),  \label{mixed_d}\\
	( z-\gamma u,v)&=0,\label{mixed_e}
	\end{align}
\end{subequations}
for all $(\bm r_1, w_1,\bm r_2, w_2,v)\in H(\text{div},\Omega)\times L^2(\Omega)\times H(\text{div},\Omega)\times L^2(\Omega)\times L^2(\Omega)$.  To approximate the solution of this problem, the HDG method seeks approximate fluxes ${\bm{q}}_h,{\bm{p}}_h \in \bm{V}_h $, states $ y_h, z_h \in W_h $, interior element boundary traces $ \widehat{y}_h^o,\widehat{z}_h^o \in M_h(o) $, and  control $ u_h \in W_h$ satisfying
\begin{subequations}\label{HDG_discrete2}
	\begin{align}
	(\bm q_h,\bm r_1)_{\mathcal T_h}-( y_h,\nabla\cdot\bm r_1)_{\mathcal T_h}+\langle \widehat y_h^o,\bm r_1\cdot\bm n\rangle_{\partial\mathcal T_h\backslash \varepsilon_h^\partial}&=-\langle  g,\bm r_1\cdot\bm n\rangle_{\varepsilon_h^\partial}, \label{HDG_discrete2_a}\\
	-(\bm q_h+\bm \beta y_h,  \nabla w_1)_{\mathcal T_h}-(\nabla\cdot\bm\beta y_h,w_1)_{\mathcal T_h} -  ( u_h, w_1)_{\mathcal T_h}  \quad  \nonumber \\ 
	+\langle\widehat {\bm q}_h\cdot\bm n,w_1\rangle_{\partial\mathcal T_h} +\langle \bm \beta\cdot\bm n\widehat y_h^o,w_1\rangle_{\partial\mathcal T_h\backslash\varepsilon_h^\partial}  &=  ( f, w_1)_{\mathcal T_h} \ \nonumber\\
	&\quad- \langle \bm \beta\cdot\bm n g,w_1\rangle_{\varepsilon_h^\partial}, \label{HDG_discrete2_b}
	\end{align}
	for all $(\bm{r}_1, w_1)\in \bm{V}_h\times W_h$,
	\begin{align}
	(\bm p_h,\bm r_2)_{\mathcal T_h}-(z_h,\nabla\cdot\bm r_2)_{\mathcal T_h}+\langle \widehat z_h,\bm r_2\cdot\bm n\rangle_{\partial\mathcal T_h\backslash\varepsilon_h^\partial}&=0,\label{HDG_discrete2_c}\\
	-(\bm p_h-\bm \beta z_h, \nabla w_2)_{\mathcal T_h}+\langle\widehat{\bm p}_h\cdot\bm n,w_2\rangle_{\partial\mathcal T_h}  \quad  \nonumber\\
	 -\langle\bm \beta\cdot\bm n\widehat z_h^o,w_2\rangle_{\partial\mathcal T_h\backslash \varepsilon_h^\partial} +  ( y_h, w_2)_{\mathcal T_h}&= (y_d, w_2)_{\mathcal T_h},  \label{HDG_discrete2_d}
	\end{align}
	for all $(\bm{r}_2, w_2)\in \bm{V}_h\times W_h$,
	\begin{align}
	\langle\widehat {\bm q}_h\cdot\bm n+\bm \beta\cdot\bm n\widehat y_h^o,\mu_1\rangle_{\partial\mathcal T_h\backslash\varepsilon^{\partial}_h}&=0\label{HDG_discrete2_e},
	\end{align}
	for all $\mu_1\in M_h(o)$,
	\begin{align}
	\langle\widehat{\bm p}_h\cdot\bm n-\bm \beta\cdot\bm n\widehat z_h^o,\mu_2\rangle_{\partial\mathcal T_h\backslash\varepsilon^{\partial}_h}&=0,\label{HDG_discrete2_f}
	\end{align}
	for all $\mu_2\in M_h(o)$, and the optimality condition 
	\begin{align}
	(z_h-\gamma u_h, w_3)_{\mathcal T_h} &= 0\label{HDG_discrete2_g},
	\end{align}
	for all $ w_3\in W_h$.  The numerical traces on $\partial\mathcal{T}_h$ are defined by 
	\begin{align}
	\widehat{\bm{q}}_h\cdot \bm n &=\bm q_h\cdot\bm n+h^{-1} (P_My_h-\widehat y_h^o) + \tau_1(y_h-\widehat y_h^o)   \qquad ~~~ \mbox{on} \; \partial \mathcal{T}_h\backslash\varepsilon_h^\partial, \label{HDG_discrete2_h}\\
	\widehat{\bm{q}}_h\cdot \bm n &=\bm q_h\cdot\bm n+h^{-1} (P_My_h-P_Mg) +  \tau_1(y_h- P_M g)  \quad \mbox{on}\;  \varepsilon_h^\partial, \label{HDG_discrete2_i}\\
	\widehat{\bm{p}}_h\cdot \bm n &=\bm p_h\cdot\bm n+h^{-1}(P_Mz_h-\widehat z_h^o) + \tau_2(z_h-\widehat y_h^o) \qquad~~~ \mbox{on} \; \partial \mathcal{T}_h\backslash\varepsilon_h^\partial,\label{HDG_discrete2_j}\\
	\widehat{\bm{p}}_h\cdot \bm n &=\bm p_h\cdot\bm n+h^{-1} P_Mz_h+\tau_2 z_h\qquad\qquad\qquad\qquad\quad\mbox{on}\;  \varepsilon_h^\partial,\label{HDG_discrete2_k}
	\end{align}
\end{subequations}
where $\tau_1$ and $\tau_2$ are stabilization functions defined on $\partial \mathcal T_h$.  In the next section, we give conditions that the stabilization functions must satisfy in order to guarantee the convergence results.

The implementation of the above HDG method and the local solver is similar to the implementation of another HDG method described in our recent work \cite{HuShenSinglerZhangZheng_HDG_Dirichlet_control5}; therefore, we omit the details.

\section{Error Analysis}
\label{sec:analysis}

Next, we perform an error analysis of the above HDG method.  Throughout this section, we assume $ \Omega $ is a bounded convex polyhedral domain, $ \bm \beta $ is continuous on $ \bar{\Omega} $, $ \bm \beta \in [ W^{1,\infty}(\Omega) ]^d $, and the solution of the optimality system \eqref{eq_adeq} is sufficiently smooth.

We choose the stabilization functions $\tau_1$ and $\tau_2$ so that the following conditions are satisfied:
\begin{description}
	
	\item[\textbf{(A1)}] $\tau_1 = \tau_2 + \bm{\beta}\cdot \bm n$.
	
	\item[\textbf{(A2)}] For any  $K\in\mathcal T_h$, $\min{(\tau_1-\frac 1 2 \bm \beta \cdot \bm n)}|_{\partial K} >0$.
	
\end{description}
Note that \textbf{(A1)} and \textbf{(A2)} imply
\begin{equation}\label{eqn:tau1_condition}
\min{(\tau_2 + \frac 1 2 \bm \beta \cdot \bm n)}|_{\partial K} >0  \quad  \mbox{for any $K\in\mathcal T_h$.}
\end{equation}

Below, we prove the main result:
\begin{theorem}\label{main_res}
	We have
		\begin{align*}
		\|\bm q-\bm q_h\|_{\mathcal T_h}&\lesssim h^{k+1}(|\bm q|_{k+1}+|y|_{k+2}+|\bm p|_{k+1}+|z|_{k+2}),\\
		\|\bm p-\bm p_h\|_{\mathcal T_h}&\lesssim h^{k+1}(|\bm q|_{k+1}+|y|_{k+2}+|\bm p|_{k+1}+|z|_{k+2}),\\
		\|y-y_h\|_{\mathcal T_h}&\lesssim h^{k+1+\min\{k,1\}}(|\bm q|_{k+1}+|y|_{k+2}+|\bm p|_{k+1}+|z|_{k+2}),\\
		\|z-z_h\|_{\mathcal T_h}&\lesssim h^{k+1+\min\{k,1\}}(|\bm q|_{k+1}+|y|_{k+2}+|\bm p|_{k+1}+|z|_{k+2}),\\
		\|u-u_h\|_{\mathcal T_h}&\lesssim h^{k+1+\min\{k,1\}}(|\bm q|_{k+1}+|y|_{k+2}+|\bm p|_{k+1}+|z|_{k+2}).
\end{align*}
\end{theorem}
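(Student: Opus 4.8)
The plan is to follow the projection-based HDG error analysis, adapted to the coupled optimality system by inserting auxiliary intermediate solutions that break the state/adjoint/control feedback. First I would introduce the $L^2$-orthogonal projections $\Pi_V$ onto $\bm V_h$, $\Pi_W$ onto $W_h$, and $P_M$ onto $M_h$, and record their approximation properties $\|\bm q-\Pi_V\bm q\|_{\mathcal T_h}\lesssim h^{k+1}|\bm q|_{k+1}$, $\|y-\Pi_W y\|_{\mathcal T_h}\lesssim h^{k+2}|y|_{k+2}$, and so on. I would then define two auxiliary HDG solutions: $(\bm q_h(u),y_h(u),\widehat y_h^o(u))$, solving the discrete state system \eqref{HDG_discrete2_a}, \eqref{HDG_discrete2_b}, \eqref{HDG_discrete2_e} driven by the \emph{exact} control $u$; and $(\bm p_h(y),z_h(y),\widehat z_h^o(y))$, solving the discrete adjoint system \eqref{HDG_discrete2_c}, \eqref{HDG_discrete2_d}, \eqref{HDG_discrete2_f} driven by the \emph{exact} state $y$. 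Each of these is a single scalar convection--diffusion HDG problem whose analysis is self-contained.

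Next I would establish the estimates for the auxiliary problems. Subtracting the discrete equations from the projected exact equations yields error equations for the projection-of-error quantities $\Pi_V\bm q-\bm q_h(u)$, $\Pi_W y-y_h(u)$, etc. Choosing the test functions equal to these error quantities produces an energy identity in which conditions \textbf{(A1)} and \textbf{(A2)} guarantee that the stabilized form is coercive on the trace-jump terms weighted by $\tau_1-\tfrac12\bm\beta\cdot\bm n$; this gives the flux bound $\|\bm q-\bm q_h(u)\|_{\mathcal T_h}\lesssim h^{k+1}(\cdots)$ together with an $O(h^{k+1})$ control on the jumps $y_h(u)-\widehat y_h^o(u)$. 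To upgrade the state to the superconvergent order $h^{k+1+\min\{k,1\}}$ I would run an Aubin--Nitsche duality argument: take the dual problem with right-hand side $\Pi_W y-y_h(u)$, test the error equations against its HDG solution, and extract the extra factor from the $h^{-1}$-scaled trace penalty in \eqref{HDG_discrete2_h}--\eqref{HDG_discrete2_i} and the degree-$(k+1)$ state space, capping the gain at one order because the dual solution is only $H^2$ on the convex domain. The identical two steps for the adjoint system give the matching bounds for $\bm p$ and $z$.

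The crucial and hardest step is closing the coupled feedback loop to pass from the auxiliary estimates to the true ones. I would split each error through its auxiliary solution, e.g.\ $\bm q-\bm q_h=(\bm q-\bm q_h(u))+(\bm q_h(u)-\bm q_h)$, where only the second piece carries the control error; the second pieces solve \emph{homogeneous} HDG systems whose source terms are proportional to $u-u_h$ (for the state) and to $y_h(u)-y_h$ (for the adjoint). The key algebraic device is the discrete adjoint relation encoded in \textbf{(A1)}: since $\tau_1=\tau_2+\bm\beta\cdot\bm n$, the discrete state and adjoint bilinear forms are mutually adjoint, so testing the state-difference equations with the adjoint-difference solution and vice versa cancels the convection and stabilization contributions exactly. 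Combining this cancellation with the discrete optimality condition \eqref{HDG_discrete2_g}, which expresses $u-u_h$ through $z-z_h$, yields an inequality of the form
\begin{align*}
\gamma\|u-u_h\|_{\mathcal T_h}^2+\|\cdots\|_{\mathcal T_h}^2\lesssim (\text{auxiliary errors})\,\|u-u_h\|_{\mathcal T_h}+(\text{auxiliary errors})^2.
\end{align*}
A Young's inequality absorbs the control error on the left and leaves $\|u-u_h\|_{\mathcal T_h}\lesssim h^{k+1+\min\{k,1\}}(\cdots)$; back-substituting this bound into the split expressions and reusing the duality estimate produces the stated rates for $y$, $z$, $\bm q$, and $\bm p$. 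The main obstacle throughout is the convection term: in contrast to the pure-diffusion case, the stabilization must be chosen through \textbf{(A1)} and \textbf{(A2)} so that both the local coercivity and the state--adjoint duality survive, and the duality argument must absorb the first-order $\bm\beta\cdot\nabla$ contributions without degrading the superconvergent order.
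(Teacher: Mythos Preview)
Your proposal is essentially the paper's approach: $L^2$ projections, an auxiliary HDG problem with the exact control fixed, energy estimates for the flux errors using \textbf{(A2)}, an Aubin--Nitsche duality step for the superconvergent state bound, and then the discrete adjoint identity (the paper's Lemma~\ref{identical_equa}, coming from \textbf{(A1)}) combined with the optimality condition to close the feedback loop.

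The one discrepancy is in how you set up the auxiliary adjoint. You define $(\bm p_h(y),z_h(y),\widehat z_h^o(y))$ as driven by the \emph{exact} state $y$, but later assert that the difference $z_h(y)-z_h$ has source proportional to $y_h(u)-y_h$; with your definition that source is actually $y-y_h$. The paper instead defines a single auxiliary problem in which the adjoint part is driven by $y_h(u)$ (the auxiliary state itself), so that $z_h(u)-z_h$ genuinely has source $-(y_h(u)-y_h)=-\zeta_y$. This choice makes the closing identity one line,
\[
\gamma\|u-u_h\|_{\mathcal T_h}^2+\|\zeta_y\|_{\mathcal T_h}^2=-(z_h(u)-z,\,u-u_h)_{\mathcal T_h},
\]
with a single auxiliary error on the right. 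Your version would produce an extra cross term $(y-y_h(u),\zeta_y)_{\mathcal T_h}$ that also needs to be absorbed by Young's inequality. Both routes close, and the paper's is marginally cleaner; otherwise your outline matches the paper step for step.
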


	\subsection{Preliminary material}


Let $\bm\Pi :  [L^2(\Omega)]^d \to \bm V_h$, $\Pi :  L^2(\Omega) \to W_h$, and $P_M:  L^2(\varepsilon_h) \to M_h$ denote the standard $L^2$ projections, which satisfy
\begin{equation}\label{L2_projection}
\begin{split}
(\bm\Pi \bm q,\bm r)_{K} &= (\bm q,\bm r)_{K} ,\qquad \forall \bm r\in [{\mathcal P}_{k}(K)]^d,\\
(\Pi y,w)_{K}  &= (y,w)_{K} ,\qquad \forall w\in \mathcal P_{k+1}(K),\\
\left\langle P_M m, \mu\right\rangle_{ e} &= \left\langle  m, \mu\right\rangle_{e }, \quad\;\;\; \forall \mu\in \mathcal P_{k}(e).
\end{split}
\end{equation}
We use the following well-known bounds:
%
\begin{subequations}\label{classical_ine}
	\begin{align}
	\norm {\bm q -\bm{\Pi q}}_{\mathcal T_h} &\lesssim h^{k+1} \norm{\bm q}_{k+1,\Omega},\quad \norm {y -{\Pi y}}_{\mathcal T_h} \lesssim h^{k+2} \norm{y}_{k+2,\Omega},\\
	\norm {y -{\Pi y}}_{\partial\mathcal T_h} &\lesssim  h^{k+\frac 3 2} \norm{y}_{k+2,\Omega},
	\quad \norm {\bm q\cdot \bm n -\bm{\Pi q}\cdot \bm n}_{\partial \mathcal T_h} \lesssim h^{k+\frac 12} \norm{\bm q}_{k+1,\Omega},\\
	\norm {w}_{\partial \mathcal T_h} &\lesssim h^{-\frac 12} \norm {w}_{ \mathcal T_h}, \quad\qquad \forall w\in W_h.
	\end{align}
\end{subequations}
We have the same projection error bounds for $\bm p$ and $z$.

Next, define HDG operators $ \mathscr B_1$ and $ \mathscr B_2 $ by
\begin{align}
\hspace{1em}&\hspace{-1em}\mathscr  B_1( \bm q_h,y_h,\widehat y_h^o;\bm r_1,w_1,\mu_1) \nonumber\\
& = (\bm q_h,\bm r_1)_{\mathcal T_h}-( y_h,\nabla\cdot\bm r_1)_{\mathcal T_h}+\langle \widehat y_h^o,\bm r_1\cdot\bm n\rangle_{\partial\mathcal T_h\backslash \varepsilon_h^\partial}-(\bm q_h+\bm \beta y_h,  \nabla w_1)_{\mathcal T_h} \nonumber\\ 
& \quad -(\nabla\cdot\bm\beta y_h,w_1)_{\mathcal T_h} +\langle {\bm q}_h\cdot\bm n +h^{-1}P_My_h +\tau_1 y_h,w_1\rangle_{\partial\mathcal T_h}\nonumber\\ 
&\quad+\langle (\bm\beta\cdot\bm n -h^{-1}-\tau_1) \widehat y_h^o,w_1\rangle_{\partial\mathcal T_h\backslash \varepsilon_h^\partial}\nonumber\\
& \quad -\langle  {\bm q}_h\cdot\bm n+\bm \beta\cdot\bm n\widehat y_h^o +h^{-1}(P_My_h-\widehat y_h^o) + \tau_1(y_h - \widehat y_h^o),\mu_1\rangle_{\partial\mathcal T_h\backslash\varepsilon^{\partial}_h},\label{def_B1}\\
\hspace{1em}&\hspace{-1em}\mathscr B_2(\bm p_h,z_h,\widehat z_h^o;\bm r_2, w_2,\mu_2) \nonumber\\
&=(\bm p_h,\bm r_2)_{\mathcal T_h}-( z_h,\nabla\cdot\bm r_2)_{\mathcal T_h}+\langle \widehat z_h^o,\bm r_2\cdot\bm n\rangle_{\partial\mathcal T_h\backslash\varepsilon_h^\partial}-(\bm p_h-\bm \beta z_h,  \nabla w_2)_{\mathcal T_h}\nonumber\\
& \quad +\langle {\bm p}_h\cdot\bm n +h^{-1} P_Mz_h +\tau_2 z_h ,w_2\rangle_{\partial\mathcal T_h} -\langle (\bm \beta\cdot\bm n + h^{-1}+\tau_2)\widehat z_h^o ,w_2\rangle_{\partial\mathcal T_h\backslash\varepsilon_h^\partial}\nonumber\\
& \quad -\langle  {\bm p}_h\cdot\bm n-\bm \beta\cdot\bm n\widehat z_h^o +h^{-1}(P_Mz_h-\widehat z_h^o)+\tau_2(z_h - \widehat z_h^o),\mu_2\rangle_{\partial\mathcal T_h\backslash\varepsilon^{\partial}_h}\label{def_B2}.
\end{align}
We use $\mathscr B_1$ and $\mathscr B_2$ to rewrite the HDG discretization of the optimality system \eqref{HDG_discrete2}: find $$({\bm{q}}_h,{\bm{p}}_h,y_h,z_h,u_h,\widehat y_h^o,\widehat z_h^o)\in \bm{V}_h\times\bm{V}_h\times W_h \times W_h\times W_h\times M_h(o)\times M_h(o)$$ satisfying
\begin{subequations}\label{HDG_full_discrete}
	\begin{align}
	\mathscr B_1(\bm q_h,y_h,\widehat y_h^o;\bm r_1,w_1,\mu_1)&=( f+ u_h, w_1)_{\mathcal T_h} - \langle P_M g,\bm r_1\cdot\bm n\rangle \nonumber \\
	& \quad -\langle (\bm\beta\cdot\bm n-h^{-1}-\tau_1) P_M g,w_1\rangle_{\varepsilon_h^\partial},\label{HDG_full_discrete_a}\\
	\mathscr B_2(\bm p_h,z_h,\widehat z_h^o;\bm r_2,w_2,\mu_2)&=(y_d-y_h,w_2)_{\mathcal T_h},\label{HDG_full_discrete_b}\\
	(z_h-\gamma u_h,w_3)_{\mathcal T_h}&= 0,\label{HDG_full_discrete_e}
	\end{align}
\end{subequations}
for all $\left(\bm{r}_1, \bm{r}_2,w_1,w_2,w_3,\mu_1,\mu_2\right)\in \bm{V}_h\times\bm{V}_h\times W_h \times W_h\times W_h\times M_h(o)\times M_h(o)$.

Next, we prove an energy identity for the HDG operators and prove the discrete optimality system \eqref{HDG_full_discrete} is well-posed.  The proofs of the next three results are similar to the proofs of the corresponding results in our earlier work \cite{HuShenSinglerZhangZheng_HDG_Dirichlet_control5}; we include them for completeness.
\begin{lemma}\label{property_B}
	For any $ ( \bm v_h, w_h, \mu_h ) \in \bm V_h \times W_h \times M_h(o) $, we have
	\begin{align*}
	\hspace{2em}&\hspace{-2em} \mathscr B_1(\bm v_h,w_h,\mu_h;\bm v_h,w_h,\mu_h)\\
	&=(\bm v_h,\bm v_h)_{\mathcal T_h}+ \langle (\tau_1 - \frac 12 \bm \beta\cdot\bm n)(w_h-\mu_h),w_h-\mu_h\rangle_{\partial\mathcal T_h\backslash \varepsilon_h^\partial}-\frac 12 (\nabla\cdot\bm\beta w_h,w_h)_{\mathcal T_h}\\
	&\quad+\langle h^{-1}(P_Mw_h-\mu_h),P_Mw_h-\mu_h\rangle_{\partial\mathcal T_h\backslash \varepsilon_h^\partial}+\langle (\tau_1-\frac12\bm \beta\cdot\bm n) w_h,w_h\rangle_{\varepsilon_h^\partial}\\
	&\quad+\langle h^{-1} P_Mw_h,P_Mw_h\rangle_{\varepsilon_h^\partial},\\
	\hspace{2em}&\hspace{-2em}\mathscr B_2(\bm v_h,w_h,\mu_h;\bm v_h,w_h,\mu_h)\\
	&=(\bm v_h,\bm v_h)_{\mathcal T_h}+ \langle (\tau_2 + \frac 12 \bm \beta\cdot\bm n)(w_h-\mu_h),w_h-\mu_h\rangle_{\partial\mathcal T_h\backslash \varepsilon_h^\partial}-\frac 12 (\nabla\cdot\bm\beta w_h,w_h)_{\mathcal T_h}\\
	&\quad+\langle h^{-1}(P_Mw_h-\mu_h),P_Mw_h-\mu_h \rangle_{\partial\mathcal T_h\backslash \varepsilon_h^\partial}+\langle (\tau_2+\frac12\bm \beta\cdot\bm n) w_h,w_h\rangle_{\varepsilon_h^\partial}\\
	&\quad+\langle h^{-1} P_Mw_h,P_Mw_h\rangle_{\varepsilon_h^\partial}.
	\end{align*}
\end{lemma}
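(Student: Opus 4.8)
The plan is to evaluate $\mathscr B_1(\bm v_h,w_h,\mu_h;\bm v_h,w_h,\mu_h)$ by substituting the trial functions into the test slots of the definition \eqref{def_B1} (so $\bm q_h=\bm r_1=\bm v_h$, $y_h=w_1=w_h$, $\widehat y_h^o=\mu_1=\mu_h$) and then regrouping into a volume part and a face part. First I would combine the term $-(w_h,\nabla\cdot\bm v_h)_{\mathcal T_h}$ with the piece $-(\bm v_h,\nabla w_h)_{\mathcal T_h}$ arising from $-(\bm q_h+\bm\beta y_h,\nabla w_1)_{\mathcal T_h}$: elementwise integration by parts gives $-(w_h,\nabla\cdot\bm v_h)_{\mathcal T_h}-(\bm v_h,\nabla w_h)_{\mathcal T_h}=-\langle w_h,\bm v_h\cdot\bm n\rangle_{\partial\mathcal T_h}$. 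Collecting this against the remaining $\bm v_h\cdot\bm n$ face contributions (from $\langle\mu_h,\bm v_h\cdot\bm n\rangle$, $\langle\bm v_h\cdot\bm n,w_h\rangle$, and $-\langle\bm v_h\cdot\bm n,\mu_h\rangle$), I expect every term containing $\bm v_h\cdot\bm n$ to cancel, leaving $(\bm v_h,\bm v_h)_{\mathcal T_h}$ as the only surviving flux contribution.

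Next I would dispose of the convection volume term $-(\bm\beta w_h,\nabla w_h)_{\mathcal T_h}=-(\bm\beta\cdot\nabla w_h,w_h)_{\mathcal T_h}$ together with $-(\nabla\cdot\bm\beta\,w_h,w_h)_{\mathcal T_h}$. Using $\nabla\cdot(\bm\beta w_h^2)=(\nabla\cdot\bm\beta)w_h^2+2w_h\,\bm\beta\cdot\nabla w_h$ and the divergence theorem on each element yields $(\bm\beta\cdot\nabla w_h,w_h)_{\mathcal T_h}=\tfrac12\langle\bm\beta\cdot\bm n\,w_h,w_h\rangle_{\partial\mathcal T_h}-\tfrac12(\nabla\cdot\bm\beta\,w_h,w_h)_{\mathcal T_h}$, so these two terms combine to $-\tfrac12(\nabla\cdot\bm\beta\,w_h,w_h)_{\mathcal T_h}-\tfrac12\langle\bm\beta\cdot\bm n\,w_h,w_h\rangle_{\partial\mathcal T_h}$. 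The first piece already matches the claimed identity, and the boundary piece feeds into the face bookkeeping.

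Finally I would collect all the face terms on $\partial\mathcal T_h\setminus\varepsilon_h^\partial$ and on $\varepsilon_h^\partial$ separately and sort them by coefficient type ($\tau_1$, $h^{-1}$, and $\bm\beta\cdot\bm n$). The $\tau_1$-terms assemble directly into $\langle\tau_1(w_h-\mu_h),w_h-\mu_h\rangle$ by completing the square. For the $h^{-1}$-terms I would invoke the $L^2(e)$-projection property in \eqref{L2_projection}: since $P_Mw_h$ and $\mu_h$ lie in $M_h$ and $h$ is constant on each face, identities such as $\langle h^{-1}P_Mw_h,w_h\rangle_e=\langle h^{-1}P_Mw_h,P_Mw_h\rangle_e$ and $\langle h^{-1}\mu_h,w_h\rangle_e=\langle h^{-1}\mu_h,P_Mw_h\rangle_e$ let me replace $w_h$ by $P_Mw_h$ and complete the square into $\langle h^{-1}(P_Mw_h-\mu_h),P_Mw_h-\mu_h\rangle$. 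The delicate step, and the main obstacle, is the $\bm\beta\cdot\bm n$-bookkeeping: the raw terms produce $-\tfrac12\langle\bm\beta\cdot\bm n\,w_h,w_h\rangle+\langle\bm\beta\cdot\bm n\,w_h,\mu_h\rangle-\langle\bm\beta\cdot\bm n\,\mu_h,\mu_h\rangle$, which is not obviously $-\tfrac12\langle\bm\beta\cdot\bm n(w_h-\mu_h),w_h-\mu_h\rangle$. The gap is exactly the term $-\tfrac12\langle\bm\beta\cdot\bm n\,\mu_h,\mu_h\rangle_{\partial\mathcal T_h\setminus\varepsilon_h^\partial}$, which vanishes because $\mu_h\in M_h(o)$ is single-valued on each interior face while the outward normals of the two adjacent elements satisfy $\bm n^+=-\bm n^-$, so the two contributions cancel. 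With this observation the interior face terms collapse to the claimed $(\tau_1-\tfrac12\bm\beta\cdot\bm n)$- and $h^{-1}$-square forms, and the boundary faces $\varepsilon_h^\partial$ (where $\mu_h$ is absent) give the two remaining boundary terms. The identity for $\mathscr B_2$ follows by the same computation with the sign of $\bm\beta$ reversed, which turns $\tau_1-\tfrac12\bm\beta\cdot\bm n$ into $\tau_2+\tfrac12\bm\beta\cdot\bm n$.
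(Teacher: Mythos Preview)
Your proposal is correct and follows essentially the same route as the paper: expand the definition, cancel all $\bm v_h\cdot\bm n$ face terms via elementwise integration by parts, use the identity $(\bm\beta\cdot\nabla w_h,w_h)_{\mathcal T_h}=\tfrac12\langle\bm\beta\cdot\bm n\,w_h,w_h\rangle_{\partial\mathcal T_h}-\tfrac12(\nabla\cdot\bm\beta\,w_h,w_h)_{\mathcal T_h}$ for the convection part, complete the squares in $\tau_1$ and $h^{-1}$, and kill the leftover $-\tfrac12\langle\bm\beta\cdot\bm n\,\mu_h,\mu_h\rangle_{\partial\mathcal T_h\setminus\varepsilon_h^\partial}$ by single-valuedness of $\mu_h$. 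You are in fact more explicit than the paper about invoking the projection property of $P_M$ to turn the $h^{-1}$-terms into the perfect square $\langle h^{-1}(P_Mw_h-\mu_h),P_Mw_h-\mu_h\rangle$; the paper passes over that step silently.
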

\begin{proof}
	We prove the first identity; the proof of the second identity is similar.
	\begin{align*}
	\hspace{1em}&\hspace{-1em} 	\mathscr B_1(\bm v_h,w_h,\mu_h;\bm v_h,w_h,\mu_h)\\
	&=(\bm v_h,\bm v_h)_{\mathcal T_h}-( w_h,\nabla\cdot\bm v_h)_{\mathcal T_h}+\langle \mu_h,\bm v_h\cdot\bm n\rangle_{\partial\mathcal T_h\backslash \varepsilon_h^\partial}-(\bm v_h+\bm \beta w_h,  \nabla w_h)_{\mathcal T_h}\\
	& \quad -(\nabla\cdot\bm\beta w_h,w_h)_{\mathcal T_h}+\langle {\bm v}_h\cdot\bm n +h^{-1}P_Mw_h+\tau_1w_h,w_h\rangle_{\partial\mathcal T_h} \\
	& \quad+\langle (\bm\beta\cdot\bm n -h^{-1}-\tau_1) \mu_h,w_h\rangle_{\partial\mathcal T_h\backslash \varepsilon_h^\partial} \\
	&\quad-\langle  {\bm v}_h\cdot\bm n+\bm \beta\cdot\bm n\mu_h +h^{-1}(P_Mw_h-\mu_h) + \tau_1(w_h - \mu_h),\mu_h \rangle_{\partial\mathcal T_h\backslash\varepsilon^{\partial}_h},\\
	&=(\bm v_h,\bm v_h)_{\mathcal T_h}-(\bm \beta w_h,  \nabla w_h)_{\mathcal T_h} -(\nabla\cdot\bm\beta w_h,w_h)_{\mathcal T_h}\\
	&\quad+\langle  h^{-1} P_Mw_h +\tau_1 w_h,w_h\rangle_{\partial\mathcal T_h} +\langle (\bm\beta\cdot\bm n -h^{-1}-\tau_1) \mu_h,w_h\rangle_{\partial\mathcal T_h\backslash \varepsilon_h^\partial}\\
	&\quad-\langle  \bm \beta\cdot\bm n \mu_h +h^{-1}(P_Mw_h-\mu_h) + \tau_1(w_h - \mu_h ),\mu_h\rangle_{\partial\mathcal T_h\backslash\varepsilon^{\partial}_h}.
	\end{align*}
	For the second term, we have
	\begin{align*}
	(\bm \beta w_h,\nabla w_h)_{\mathcal T_h}&=(\bm \beta\cdot\nabla w_h,w_h)_{\mathcal T_h}=(\nabla\cdot(\bm \beta w_h),w_h)_{\mathcal T_h}-(\nabla\cdot\bm \beta w_h,w_h)_{\mathcal T_h}\\
	&=\langle\bm \beta\cdot\bm n w_h,w_h\rangle_{\partial\mathcal T_h}-(\bm \beta w_h,\nabla w_h)_{\mathcal T_h}-(\nabla\cdot\bm \beta w_h,w_h)_{\mathcal T_h},
	\end{align*}
	which implies
	\begin{align}\label{beta_iden}
	(\bm \beta w_h,\nabla w_h)_{\mathcal T_h}=\frac12\langle\bm \beta\cdot\bm n w_h,w_h\rangle_{\partial\mathcal T_h}-\frac12(\nabla\cdot\bm \beta w_h,w_h)_{\mathcal T_h}.
	\end{align}
	This gives
	\begin{align*}
	\hspace{1em}&\hspace{-1em}  \mathscr B_1 (\bm v_h,w_h,\mu_h;\bm v_h,w_h,\mu_h)\\
	&=(\bm v_h,\bm v_h)_{\mathcal T_h}+ \langle (\tau_1 - \frac 12 \bm \beta\cdot\bm n)(w_h-\mu_h),w_h-\mu_h\rangle_{\partial\mathcal T_h\backslash \varepsilon_h^\partial} - \frac 1 2(\nabla\cdot\bm\beta w_h,w_h)_{\mathcal T_h}\\
	&\quad+\langle h^{-1}(P_Mw_h-\mu_h),P_Mw_h-\mu_h \rangle_{\partial\mathcal T_h\backslash \varepsilon_h^\partial}+\langle (\tau_1-\frac12\bm \beta\cdot\bm n) w_h,w_h\rangle_{\varepsilon_h^\partial}\\
	& \quad+\langle h^{-1} P_Mw_h,P_Mw_h\rangle_{\varepsilon_h^\partial} -\frac12\langle\bm \beta\cdot\bm n \mu_h,\mu_h\rangle_{\partial\mathcal T_h\backslash \varepsilon_h^\partial}.
	\end{align*}
	Since $ \mu_h$ is single-valued across the interfaces, we have
	\begin{align*}
	-\frac12\langle\bm \beta\cdot\bm n\mu_h,\mu_h\rangle_{\partial\mathcal T_h\backslash\varepsilon_h^\partial}=0.
	\end{align*}
	This completes the proof.
\end{proof}

The following property of the HDG operators is crucial to our analysis.
\begin{lemma}\label{identical_equa}
	We have $\mathscr B_1 (\bm q_h,y_h,\widehat y_h^o;\bm p_h,-z_h,-\widehat z_h^o) + \mathscr B_2 (\bm p_h,z_h,\widehat z_h^o;-\bm q_h,y_h,\widehat y_h^o) = 0.$
\end{lemma}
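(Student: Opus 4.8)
The plan is to expand both HDG bilinear forms with the indicated cross-arguments directly from the definitions \eqref{def_B1} and \eqref{def_B2}, and show that after integration by parts the two expressions cancel term by term. The key structural observation is that $\mathscr B_1$ and $\mathscr B_2$ are in a formal adjoint relationship: the state equation involves $\bm\beta\cdot\nabla y$ while the dual equation involves $-\nabla\cdot(\bm\beta z)$, and the stabilization traces are linked through assumption \textbf{(A1)}, $\tau_1 = \tau_2 + \bm\beta\cdot\bm n$. So I expect most volume terms to match up via a single integration by parts, and the boundary/trace terms to combine using \textbf{(A1)}.

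First I would write out $\mathscr B_1(\bm q_h,y_h,\widehat y_h^o;\bm p_h,-z_h,-\widehat z_h^o)$, substituting $\bm r_1 = \bm p_h$, $w_1 = -z_h$, $\mu_1 = -\widehat z_h^o$, and likewise $\mathscr B_2(\bm p_h,z_h,\widehat z_h^o;-\bm q_h,y_h,\widehat y_h^o)$ with $\bm r_2 = -\bm q_h$, $w_2 = y_h$, $\mu_2 = \widehat y_h^o$. The volume terms $(\bm q_h,\bm p_h)_{\mathcal T_h}$ from $\mathscr B_1$ and $(\bm p_h,-\bm q_h)_{\mathcal T_h}$ from $\mathscr B_2$ cancel immediately. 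The mixed terms pairing $z_h$ with $\nabla\cdot\bm q_h$ (and $y_h$ with $\nabla\cdot\bm p_h$) should cancel against the $(\bm q_h,\nabla z_h)$ and $(\bm p_h,\nabla y_h)$ volume contributions after one integration by parts on each element, with the resulting boundary terms $\langle \bm q_h\cdot\bm n, z_h\rangle_{\partial\mathcal T_h}$ and $\langle \bm p_h\cdot\bm n, y_h\rangle_{\partial\mathcal T_h}$ absorbed into the explicit trace terms already present in the definitions. I would then collect the convective terms: the $(\bm\beta y_h,\nabla z_h)$ and $(\bm\beta z_h,\nabla y_h)$ contributions combine with the $-(\nabla\cdot\bm\beta\, y_h, z_h)_{\mathcal T_h}$ terms (which appear with opposite effective sign in the two forms) via the product-rule identity $\nabla\cdot(\bm\beta y_h z_h) = (\nabla\cdot\bm\beta) y_h z_h + \bm\beta\cdot\nabla y_h\, z_h + \bm\beta\cdot\nabla z_h\, y_h$, leaving only a boundary term $\langle\bm\beta\cdot\bm n\, y_h z_h\rangle_{\partial\mathcal T_h}$.

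The main obstacle, and the step requiring the most care, will be the bookkeeping of the $h^{-1}$-penalty and $\tau$-stabilization trace terms on $\partial\mathcal T_h\setminus\varepsilon_h^\partial$ together with the convective boundary terms. Here the interplay of signs is delicate: the $w$-test trace contributions and the $\mu$-test trace contributions must be matched across the two forms, and the $\bm\beta\cdot\bm n$ pieces must be reconciled using \textbf{(A1)} so that $\tau_1$ in $\mathscr B_1$ and $\tau_2$ in $\mathscr B_2$ differ by exactly $\bm\beta\cdot\bm n$. I would organize these into symmetric pairs of the form $\langle h^{-1}(P_M y_h - \widehat y_h^o), P_M z_h - \widehat z_h^o\rangle$ and analogous $\tau$-weighted pairs, observing that each such symmetric bilinear expression appears once with a plus sign and once with a minus sign (owing to the sign flips in the test arguments $-z_h,-\widehat z_h^o$ versus $y_h,\widehat y_h^o$), so they annihilate. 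The single-valuedness of $\widehat y_h^o$ and $\widehat z_h^o$ across interior faces, used exactly as in Lemma \ref{property_B}, will kill the leftover $\langle\bm\beta\cdot\bm n\,\widehat y_h^o,\widehat z_h^o\rangle$-type terms, and the convective boundary terms should telescope to zero since $y_h z_h$ is not single-valued but the $\bm\beta\cdot\bm n$ factor changes sign across each interior face. Verifying that no residual boundary term survives is precisely where I would be most careful.
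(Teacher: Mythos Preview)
Your overall strategy---expand both forms, integrate by parts elementwise, and invoke \textbf{(A1)} together with single-valuedness of the traces---is exactly the route the paper takes. However, two specific claims in your bookkeeping are incorrect and, as stated, would leave uncontrolled residues.

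First, the ``$\tau$-weighted pairs'' do \emph{not} annihilate by the sign flip alone, because the weight in $\mathscr B_1$ is $\tau_1$ while in $\mathscr B_2$ it is $\tau_2$; after the symmetric cancellations you describe, a residue proportional to $(\tau_2-\tau_1)$ survives. Second, the convective boundary term $\langle\bm\beta\cdot\bm n\,y_h,z_h\rangle_{\partial\mathcal T_h}$ does \emph{not} telescope to zero: precisely because $y_h z_h$ is not single-valued across interior faces, the sign change of $\bm\beta\cdot\bm n$ is insufficient to kill it. These two residues are exactly what \textbf{(A1)} is designed to handle \emph{jointly}: the $(\tau_2-\tau_1)$ residue and the $\bm\beta\cdot\bm n$ residue combine into expressions of the form $\langle(\tau_2+\bm\beta\cdot\bm n-\tau_1)\,y_h,z_h\rangle_{\partial\mathcal T_h}$ and $\langle(\tau_2+\bm\beta\cdot\bm n-\tau_1)\,\widehat y_h^o,\widehat z_h^o\rangle_{\partial\mathcal T_h\setminus\varepsilon_h^\partial}$, which vanish identically under \textbf{(A1)}. (The single-valuedness argument you cite is only needed for a leftover $\langle\bm\beta\cdot\bm n\,\widehat y_h^o,\widehat z_h^o\rangle$ piece, where it is legitimate.) Once you reorganize the trace terms so that the $\tau$ and $\bm\beta\cdot\bm n$ contributions are grouped together before applying \textbf{(A1)}, your proof goes through and coincides with the paper's.
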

\begin{proof}
	By definition:
	\begin{align*}
	\hspace{1em}&\hspace{-1em} \mathscr B_1 (\bm q_h,y_h,\widehat y_h^o;\bm p_h,-z_h,-\widehat z_h^o) + \mathscr B_2 (\bm p_h,z_h,\widehat z_h^o;-\bm q_h,y_h,\widehat y_h^o)\\
	&=(\bm{q}_h, \bm p_h)_{{\mathcal{T}_h}}- (y_h, \nabla\cdot \bm p_h)_{{\mathcal{T}_h}}+\langle \widehat{y}_h^o, \bm p_h\cdot \bm{n} \rangle_{\partial{{\mathcal{T}_h}}\backslash {\varepsilon_h^{\partial}}} + (\bm{q}_h + \bm{\beta} y_h, \nabla z_h)_{{\mathcal{T}_h}} \\
	&\quad + (\nabla\cdot\bm \beta y_h,  z_h)_{{\mathcal{T}_h}} - \langle\bm q_h\cdot\bm n +h^{-1}P_M y_h+\tau_1 y_h , z_h \rangle_{\partial{{\mathcal{T}_h}}} \\
	&\quad	- \langle(\bm\beta\cdot \bm n-\tau_1-h^{-1})\widehat y_h^o, z_h \rangle_{\partial{{\mathcal{T}_h}}\backslash \varepsilon_h^{\partial}}\\
	&\quad+ \langle\bm q_h\cdot\bm n + \bm{\beta}\cdot\bm n \widehat y_h^o  +h^{-1}(P_My_h - \widehat y_h^o)+\tau_1(y_h-\widehat y_h^o), \widehat z_h^o  \rangle_{\partial{{\mathcal{T}_h}}\backslash\varepsilon_h^{\partial}}\\
	&\quad-(\bm{p}_h, \bm q_h)_{{\mathcal{T}_h}}+ (z_h, \nabla\cdot \bm q_h)_{{\mathcal{T}_h}} -\langle \widehat{z}_h^o, \bm q_h \cdot \bm{n} \rangle_{\partial{{\mathcal{T}_h}}\backslash {\varepsilon_h^{\partial}}}  - (\bm{p}_h - \bm{\beta} z_h, \nabla y_h)_{{\mathcal{T}_h}} \\
	&\quad+\langle\bm p_h\cdot\bm n +h^{-1}P_M z_h+\tau_2 z_h , y_h \rangle_{\partial{{\mathcal{T}_h}}} - \langle (\bm{\beta}\cdot \bm n+\tau_2 + h^{-1}) \widehat z_h^o, y_h \rangle_{\partial{{\mathcal{T}_h}}\backslash \varepsilon_h^{\partial}}\\
	&\quad- \langle\bm p_h\cdot\bm n -\bm{\beta} \cdot\bm n\widehat z_h^o +h^{-1}(P_M z_h - \widehat z_h^o) + \tau_2 (z_h-\widehat z_h^o), \widehat y_h^o \rangle_{\partial{{\mathcal{T}_h}}\backslash\varepsilon_h^{\partial}}.
	\end{align*}
	Integration by parts gives
	\begin{align*}
	\mathscr B_1 &(\bm q_h,y_h,\widehat y_h^o;\bm p_h,-z_h,-\widehat z_h^o) + \mathscr B_2 (\bm p_h,z_h,\widehat z_h^o;-\bm q_h,y_h,\widehat y_h^o)\\
	&=\langle (\tau_2 + \bm{\beta}\cdot\bm n-\tau_1) y_h, z_h \rangle_{\partial\mathcal T_h} + \langle (\tau_2 + \bm{\beta}\cdot\bm n-\tau_1) \widehat y_h^o, \widehat z_h^o \rangle_{\partial\mathcal T_h\backslash\varepsilon_h^\partial}.
	\end{align*}
	Condition \textbf{(A1)} completes the proof.
\end{proof}

\begin{proposition}\label{ex_uni}
	There exists a unique solution of the HDG equations \eqref{HDG_full_discrete}.
\end{proposition}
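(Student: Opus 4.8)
Since \eqref{HDG_full_discrete} is a square linear system over a finite-dimensional space, existence and uniqueness are equivalent, so it suffices to show that the homogeneous problem (data $f=0$, $g=0$, $y_d=0$) admits only the trivial solution. Thus the plan is to assume $(\bm q_h,\bm p_h,y_h,z_h,u_h,\widehat y_h^o,\widehat z_h^o)$ solves \eqref{HDG_full_discrete} with vanishing data and to show each component is zero.

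The first step decouples the state/control pair. I would test \eqref{HDG_full_discrete_a} with $(\bm r_1,w_1,\mu_1)=(\bm p_h,-z_h,-\widehat z_h^o)$ and \eqref{HDG_full_discrete_b} with $(\bm r_2,w_2,\mu_2)=(-\bm q_h,y_h,\widehat y_h^o)$ and add the two equations. By Lemma~\ref{identical_equa} the left-hand sides cancel exactly, while the right-hand sides reduce to $(u_h,-z_h)_{\mathcal T_h}+(-y_h,y_h)_{\mathcal T_h}$. Using the discrete optimality condition \eqref{HDG_full_discrete_e} with $w_3=u_h$ to replace $(z_h,u_h)_{\mathcal T_h}$ by $\gamma\|u_h\|^2_{\mathcal T_h}$, this yields $\gamma\|u_h\|^2_{\mathcal T_h}+\|y_h\|^2_{\mathcal T_h}=0$. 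Since $\gamma>0$, this forces $u_h=0$ and $y_h=0$.

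With $y_h=0$ and $u_h=0$ in hand, the right-hand sides of \eqref{HDG_full_discrete_a} and \eqref{HDG_full_discrete_b} vanish. I would then test \eqref{HDG_full_discrete_a} with $(\bm q_h,0,\widehat y_h^o)$ and apply the energy identity of Lemma~\ref{property_B}: all surviving terms are non-negative by \textbf{(A2)} and sum to zero, giving $\bm q_h=0$ and $\widehat y_h^o=0$. Likewise, testing \eqref{HDG_full_discrete_b} with $(\bm p_h,z_h,\widehat z_h^o)$ and using the second identity of Lemma~\ref{property_B} together with \eqref{eqn:tau1_condition} and the sign condition $\nabla\cdot\bm\beta\le 0$ (which makes the $-\frac12(\nabla\cdot\bm\beta\, z_h,z_h)_{\mathcal T_h}$ term non-negative) gives $\bm p_h=0$, $z_h=\widehat z_h^o$ on the interior faces, and $z_h=0$ on $\varepsilon_h^\partial$.

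The main obstacle is the final step: the energy identity controls $\bm p_h$ and the face traces of $z_h$, but not $z_h$ in the element interiors, so one cannot yet conclude $z_h=0$. To close this gap, I would return to the $\bm r_2$-component of \eqref{HDG_full_discrete_b}, namely $-(z_h,\nabla\cdot\bm r_2)_{\mathcal T_h}+\langle\widehat z_h^o,\bm r_2\cdot\bm n\rangle_{\partial\mathcal T_h\backslash\varepsilon_h^\partial}=0$ after inserting $\bm p_h=0$, integrate by parts, and use the trace relations just obtained to cancel the boundary contributions, leaving $(\nabla z_h,\bm r_2)_{\mathcal T_h}=0$ for all $\bm r_2\in\bm V_h$. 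Choosing $\bm r_2=\nabla z_h$, which lies in $\bm V_h$ since $z_h\in W_h$, shows $z_h$ is piecewise constant; matching traces across interior faces together with the vanishing trace on the boundary then force $z_h\equiv 0$ on the connected domain, whence $\widehat z_h^o=0$ as well. This shows the homogeneous solution is trivial, and therefore \eqref{HDG_full_discrete} is well-posed.
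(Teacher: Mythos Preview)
Your argument is correct, but it takes an unnecessary detour precisely at what you call ``the main obstacle.'' The discrete optimality condition \eqref{HDG_full_discrete_e} asserts $(z_h-\gamma u_h,w_3)_{\mathcal T_h}=0$ for \emph{every} $w_3\in W_h$; since $z_h-\gamma u_h\in W_h$, this already gives $z_h=\gamma u_h$ as an identity in $W_h$, not merely a single scalar relation. Hence, as soon as your first step yields $u_h=0$, you get $z_h=0$ for free. The paper's proof uses exactly this: after the cross-testing and Lemma~\ref{identical_equa} force $y_h=u_h=0$, it invokes $z_h=\gamma u_h$ to conclude $z_h=0$ immediately, and then a single application of the energy identity (Lemma~\ref{property_B}) to each of $\mathscr B_1$ and $\mathscr B_2$ finishes off $\bm q_h,\bm p_h,\widehat y_h^o,\widehat z_h^o$.

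Your route instead treats $z_h$ as still unknown and closes the gap with an extra integration-by-parts argument (returning to the $\bm r_2$-equation, showing $\nabla z_h=0$ elementwise, and chaining traces). This works---and is a nice illustration of how one would argue without the control structure---but it is more laborious than necessary. The cleaner observation is that the optimality condition couples $z_h$ rigidly to $u_h$, so you never face the ``interior $z_h$'' obstacle at all.
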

\begin{proof}
	Since the system \eqref{HDG_full_discrete} is finite dimensional, we only need to prove the uniqueness.  Therefore, we assume $y_d = f = g=0$ and show the system \eqref{HDG_full_discrete} only has the zero solution.
	
	First, take $(\bm r_1,w_1,\mu_1) = (\bm p_h,-z_h,-\widehat z_h^o)$, $(\bm r_2,w_2,\mu_2) = (-\bm q_h,y_h,\widehat y_h^o)$,  and $w_3 = z_h-\gamma u_h $ in the HDG equations \eqref{HDG_full_discrete_a},  \eqref{HDG_full_discrete_b}, and \eqref{HDG_full_discrete_e}, respectively, and sum to obtain
	\begin{align*}
	\hspace{4em}&\hspace{-4em}  \mathscr B_1  (\bm q_h,y_h,\widehat y_h^o;\bm p_h,-z_h,-\widehat z_h^o) + \mathscr B_2 (\bm p_h,z_h,\widehat z_h^o;-\bm q_h,y_h,\widehat y_h^o)\\
	&  = \gamma (y_h,y_h)_{\mathcal T_h} +   (z_h,z_h)_{\mathcal T_h}.
	\end{align*}
	Since $\gamma>0$, Lemma \ref{identical_equa} gives $y_h =  u_h = z_h= 0$.
	
	Next, take $(\bm r_1,w_1,\mu_1) = (\bm q_h,y_h,\widehat y_h^o)$ and $(\bm r_2,w_2,\mu_2) = (\bm p_h,z_h,\widehat z_h^o)$ in Lemma \ref{property_B}, and then use \textbf{(A2)} and \eqref{eqn:tau1_condition} to get $\bm q_h= \bm p_h= \bm 0 $, $ \widehat y_h^o  = \widehat z_h^o=0$.
\end{proof}

\subsection{Proof of the main result}

We follow the proof strategy used in our earlier works \cite{HuShenSinglerZhangZheng_HDG_Dirichlet_control1,HuShenSinglerZhangZheng_HDG_Dirichlet_control5}, and split the proof of the main result into eight steps.  We consider the following auxiliary problem: find $$({\bm{q}}_h(u),{\bm{p}}_h(u), y_h(u), z_h(u), {\widehat{y}}_h^o(u), {\widehat{z}}_h^o(u))\in \bm{V}_h\times\bm{V}_h\times W_h \times W_h\times M_h(o)\times M_h(o)$$ such that
\begin{subequations}\label{HDG_inter_u}
	\begin{align}
	\mathscr B_1(\bm q_h(u),y_h(u),\widehat{y}_h(u);\bm r_1, w_1,\mu_1)&=( f+ u, w_1)_{\mathcal T_h} - \langle P_Mg,\bm r_1\cdot\bm n\rangle \nonumber \\
	&\quad-\langle (\bm\beta\cdot\bm n-h^{-1}-\tau_1) P_Mg,w_1\rangle_{\varepsilon_h^\partial},\label{HDG_u_a}\\
	\mathscr B_2(\bm p_h(u),z_h(u),\widehat{z}_h(u);\bm r_2, w_2,\mu_2)&=(y_d-y_h(u), w_2)_{\mathcal T_h},\label{HDG_u_b}
	\end{align}
\end{subequations}
for all $\left(\bm{r}_1, \bm{r}_2,w_1,w_2,\mu_1,\mu_2\right)\in \bm{V}_h\times\bm{V}_h \times W_h\times W_h\times M_h(o)\times M_h(o)$.

In the first three steps of the proof, we bound the error between the solution components $ (y_h(u), \bm q_h(u) ) $ of part 1 of the auxiliary problem and $ (y,\bm q) $ of the mixed form of the optimality system.  Since $ u $ is the exact optimal control in both problems and is fixed, the source terms in both problems are the same.  We would use the results from \cite{MR3440284} to obtain the error bounds; however, the authors of \cite{MR3440284} pointed us to an error in their work in the $ k = 0 $ case.  To be complete, we present most of the proofs in Steps 1--3, and we use many proof strategies from \cite{MR3440284} in those steps. 


\subsubsection{Step 1: The error equation for part 1 of the auxiliary problem \eqref{HDG_u_a}.}

Define
\begin{equation}\label{notation_1}
\begin{split}
\delta^{\bm q} &=\bm q-{\bm\Pi}\bm q,  \qquad\qquad\qquad \qquad\qquad\qquad\;\;\;\;\varepsilon^{\bm q}_h={\bm\Pi} \bm q-\bm q_h(u),\\
\delta^y&=y- {\Pi} y, \qquad\qquad\qquad \qquad\qquad\qquad\;\;\;\; \;\varepsilon^{y}_h={\Pi} y-y_h(u),\\
\delta^{\widehat y} &= y-P_My,  \qquad\qquad\qquad\qquad\qquad\qquad \;\;\; \varepsilon^{\widehat y}_h=P_M y-\widehat{y}_h(u),\\
\widehat {\bm\delta}_1 &= \delta^{\bm q}\cdot\bm n+h^{-1} P_M \delta^y + \bm{\beta}\cdot\bm n \delta^{\widehat y} + \tau_1(\delta^y - \delta^{\widehat y}).
\end{split}
\end{equation}
where $\widehat y_h(u) = \widehat y_h^o(u)$ on $\varepsilon_h^o$ and $\widehat y_h(u) = P_M g$ on $\varepsilon_h^{\partial}$. This gives $\varepsilon_h^{\widehat y} = 0$ on $\varepsilon_h^{\partial}$.

\begin{lemma}\label{lemma:step1_first_lemma}
	We have
	\begin{align}\label{error_equation_L2k1}
	\mathscr B_1 (\varepsilon_h^{\bm q},\varepsilon_h^{ y}, \varepsilon_h^{\widehat y}, \bm r_1, w_1, \mu_1) &= ( \bm \beta \delta^y, \nabla w_1)_{{\mathcal{T}_h}}
	+(\nabla\cdot\bm{\beta}\delta^y,w_1)_{\mathcal T_h}\nonumber\\
	& \quad  - \langle \widehat{\bm \delta}_1, w_1 \rangle_{\partial{{\mathcal{T}_h}}} + \langle \widehat{\bm \delta}_1, \mu_1 \rangle_{\partial{{\mathcal{T}_h}}\backslash \varepsilon_h^{\partial}}.
	\end{align}
\end{lemma}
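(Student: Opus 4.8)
The plan is to derive \eqref{error_equation_L2k1} by combining the linearity of $\mathscr B_1$ in its first three arguments with a consistency identity for the exact solution and the orthogonality of the $L^2$ projections. Writing $\cdots$ for the fixed test triple $(\bm r_1,w_1,\mu_1)$, I first use linearity together with the definitions in \eqref{notation_1} to split
\[
\mathscr B_1(\varepsilon_h^{\bm q},\varepsilon_h^{y},\varepsilon_h^{\widehat y};\cdots)
=\mathscr B_1(\bm\Pi\bm q,\Pi y,P_My;\cdots)-\mathscr B_1(\bm q_h(u),y_h(u),\widehat y_h(u);\cdots),
\]
and replace the last term by the source of \eqref{HDG_u_a}. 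Using $\bm\Pi\bm q=\bm q-\delta^{\bm q}$, $\Pi y=y-\delta^{y}$, and $P_My=y-\delta^{\widehat y}$, I split once more,
\[
\mathscr B_1(\bm\Pi\bm q,\Pi y,P_My;\cdots)=\mathscr B_1(\bm q,y,y;\cdots)-\mathscr B_1(\delta^{\bm q},\delta^{y},\delta^{\widehat y};\cdots),
\]
where in the first form the third slot carries the continuous, single-valued exact trace of $y$.

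The two pieces are then treated separately. For the consistency piece $\mathscr B_1(\bm q,y,y;\cdots)$, I integrate the volume terms in \eqref{def_B1} by parts and substitute $\bm q=-\nabla y$, the state equation written as $\nabla\cdot\bm q+\bm\beta\cdot\nabla y=f+u$, and $y=g$ on $\varepsilon_h^\partial$; the term $-(\nabla\cdot\bm\beta\,y,w_1)_{\mathcal T_h}$ cancels the one produced by integrating $-(\bm\beta y,\nabla w_1)_{\mathcal T_h}$ by parts. On the interior skeleton, single-valuedness of the exact trace and continuity of $\bm q\cdot\bm n$ kill $\langle\bm q\cdot\bm n,\mu_1\rangle$ and $\langle\bm\beta\cdot\bm n\,y,\mu_1\rangle$, and the remaining interior $\mu_1$-contribution vanishes because $\delta^{\widehat y}\perp\mathcal P^k$. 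The only subtlety is that the factor $h^{-1}P_My$ inside \eqref{def_B1} (rather than $h^{-1}y$) leaves behind $h^{-1}(P_My-y)=-h^{-1}\delta^{\widehat y}$. After subtracting the source of \eqref{HDG_u_a}, and noting that $\langle P_Mg,\bm r_1\cdot\bm n\rangle=\langle g,\bm r_1\cdot\bm n\rangle$ since $\bm r_1\cdot\bm n\in\mathcal P^k$, this piece reduces to the two face contributions $\langle-h^{-1}\delta^{\widehat y},w_1\rangle_{\partial\mathcal T_h\backslash\varepsilon_h^\partial}$ and $\langle(\tau_1-\bm\beta\cdot\bm n)\delta^{\widehat y},w_1\rangle_{\varepsilon_h^\partial}$, where on the boundary $\delta^{\widehat y}=g-P_Mg$.

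For the projection-error piece $\mathscr B_1(\delta^{\bm q},\delta^{y},\delta^{\widehat y};\cdots)$, I expand term by term and discard everything annihilated by the projections \eqref{L2_projection}: $(\delta^{\bm q},\bm r_1)_{\mathcal T_h}$, $(\delta^{y},\nabla\cdot\bm r_1)_{\mathcal T_h}$, $(\delta^{\bm q},\nabla w_1)_{\mathcal T_h}$, $\langle\delta^{\widehat y},\bm r_1\cdot\bm n\rangle$, and $\langle h^{-1}\delta^{\widehat y},\mu_1\rangle$ all vanish, since in each case the paired test function is a polynomial of degree low enough to be reproduced by the corresponding $L^2$ projection. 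The surviving volume terms are exactly the convection terms $(\bm\beta\delta^{y},\nabla w_1)_{\mathcal T_h}$ and $(\nabla\cdot\bm\beta\,\delta^{y},w_1)_{\mathcal T_h}$, which do not vanish because $\bm\beta$ is not piecewise polynomial. The $\mu_1$-term collapses to $\langle\widehat{\bm\delta}_1,\mu_1\rangle_{\partial\mathcal T_h\backslash\varepsilon_h^\partial}$ once one recognizes $\widehat{\bm\delta}_1$ from \eqref{notation_1} and uses $\langle h^{-1}\delta^{\widehat y},\mu_1\rangle=0$, while the residual $w_1$-face terms are carried to the final step.

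Finally, I add the leftover $w_1$-face terms from the consistency piece to those from the projection-error piece. Collecting, on the interior and boundary faces separately, the coefficients of $\delta^{\bm q}\cdot\bm n$, $P_M\delta^{y}$, $\delta^{y}$, and $\delta^{\widehat y}$, the $\delta^{\widehat y}$ contributions combine so that the coefficient becomes $\tau_1-\bm\beta\cdot\bm n$, and the whole sum regroups precisely into $-\langle\widehat{\bm\delta}_1,w_1\rangle_{\partial\mathcal T_h}$; together with the two convection volume terms and $\langle\widehat{\bm\delta}_1,\mu_1\rangle_{\partial\mathcal T_h\backslash\varepsilon_h^\partial}$ this yields \eqref{error_equation_L2k1}. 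I expect the main obstacle to be this last regrouping: one must track the $h^{-1}$, $\tau_1$, and $\bm\beta\cdot\bm n$ factors carefully, account for the extra $\delta^{\widehat y}$ terms generated by the $P_My$ inside $\mathscr B_1$, and reconcile $g$ with $P_Mg$ on $\varepsilon_h^\partial$, so that both the interior and boundary face terms assemble into the single compact flux defect $\widehat{\bm\delta}_1$.
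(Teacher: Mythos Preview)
Your proposal is correct and follows essentially the same route as the paper. The paper expands $\mathscr B_1(\bm\Pi\bm q,\Pi y,P_M y;\bm r_1,w_1,\mu_1)$ directly, inserts projection identities term by term, substitutes the exact equations for $(\bm q,y)$, and then subtracts \eqref{HDG_u_a}; your additional linearity split $\mathscr B_1(\bm\Pi\bm q,\Pi y,P_M y;\cdots)=\mathscr B_1(\bm q,y,y;\cdots)-\mathscr B_1(\delta^{\bm q},\delta^{y},\delta^{\widehat y};\cdots)$ simply organizes these same manipulations into a consistency piece and a projection-error piece, and the final regrouping into $-\langle\widehat{\bm\delta}_1,w_1\rangle_{\partial\mathcal T_h}$ is identical.
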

\begin{proof}
	By definition:
	\begin{align*}
	\hspace{1em}&\hspace{-1em} 	\mathscr B_1 (\bm \Pi {\bm q},\Pi { y}, P_M  y, \bm r_1, w_1, \mu_1)\\
	&= (\bm \Pi {\bm q}, \bm{r_1})_{{\mathcal{T}_h}}- (\Pi { y}, \nabla\cdot \bm{r_1})_{{\mathcal{T}_h}}+\langle P_M  y, \bm{r_1}\cdot \bm{n} \rangle_{\partial{{\mathcal{T}_h}}\backslash {\varepsilon_h^{\partial}}}  - (\bm \Pi {\bm q} + \bm{\beta} \Pi y, \nabla w_1)_{{\mathcal{T}_h}} \\
	&\quad - (\nabla\cdot\bm{\beta} \Pi y,  w_1)_{{\mathcal{T}_h}} +\langle \bm \Pi {\bm q}\cdot\bm n +h^{-1} P_M\Pi { y}+\tau_1 \Pi y , w_1 \rangle_{\partial{{\mathcal{T}_h}}} \\
	&\quad+ \langle (\bm{\beta}\cdot\bm n-h^{-1}-\tau_1) P_M  y, w_1 \rangle_{\partial{{\mathcal{T}_h}}\backslash \varepsilon_h^{\partial}}\\ 
	&	\quad -\langle  \bm \Pi \bm q\cdot\bm n+\bm \beta\cdot\bm n P_M y +h^{-1}(P_M\Pi y-P_M y) + \tau_1(\Pi y - P_M y),\mu_1\rangle_{\partial\mathcal T_h\backslash\varepsilon^{\partial}_h}.
	\end{align*}
	Properties of the $ L^2 $ projections \eqref{L2_projection} give
	\begin{align*}
	\hspace{1em}&\hspace{-1em}  	\mathscr B_1 (\bm \Pi {\bm q},\Pi { y},P_M  y, \bm r_1, w_1, \mu_1)\\
	& = ( {\bm q}, \bm{r_1})_{{\mathcal{T}_h}}- ({ y}, \nabla\cdot \bm{r_1})_{{\mathcal{T}_h}}+\langle   y, \bm{r_1}\cdot \bm{n} \rangle_{\partial{{\mathcal{T}_h}}\backslash {\varepsilon_h^{\partial}}}\\
	& \quad - ( {\bm q} + \bm \beta y, \nabla w_1)_{{\mathcal{T}_h}} +  (  \bm \beta \delta^y, \nabla w_1)_{{\mathcal{T}_h}} - (\nabla\cdot\bm{\beta} y, w_1)_{\mathcal T_h} + (\nabla\cdot\bm{\beta} \delta^y, w_1)_{\mathcal T_h} \\
	&\quad+\langle {\bm q}\cdot\bm n, w_1 \rangle_{\partial{{\mathcal{T}_h}}} - \langle \delta^{\bm q}\cdot\bm n, w_1 \rangle_{\partial{{\mathcal{T}_h}}}+\langle h^{-1} P_M\Pi { y}+\tau_1\Pi y , w_1 \rangle_{\partial{{\mathcal{T}_h}}}  \\
	&\quad+\langle\bm\beta\cdot\bm n y, w_1\rangle_{\partial\mathcal T_h\backslash\varepsilon_h^\partial} - \langle\bm\beta\cdot\bm n \delta^{\widehat y}, w_1\rangle_{\partial\mathcal T_h\backslash\varepsilon_h^\partial}
	- \langle (h^{-1}+\tau_1) P_M  y, w_1 \rangle_{\partial{{\mathcal{T}_h}}\backslash \varepsilon_h^{\partial}} \\
	&\quad- \langle {\bm q}\cdot\bm n, \mu_1 \rangle_{\partial{{\mathcal{T}_h}}\backslash\varepsilon_h^{\partial}} + \langle \delta^{\bm q}\cdot\bm n, \mu_1 \rangle_{\partial{{\mathcal{T}_h}}\backslash\varepsilon_h^{\partial}} - \langle {\bm \beta}\cdot\bm n y, \mu_1 \rangle_{\partial{{\mathcal{T}_h}}\backslash\varepsilon_h^{\partial}} \\
	&\quad+\langle {\bm \beta}\cdot\bm n \delta^{\widehat y}, \mu_1 \rangle_{\partial{{\mathcal{T}_h}}\backslash\varepsilon_h^{\partial}}+\langle h^{-1} P_M \delta^y, \mu_1 \rangle_{\partial{{\mathcal{T}_h}}\backslash\varepsilon_h^{\partial}} + \langle \tau_1 (\delta^y-\delta^{\widehat y}), \mu_1 \rangle_{\partial{{\mathcal{T}_h}}\backslash\varepsilon_h^{\partial}}.
	\end{align*}
	
	The exact state $ y $ and flux $\bm{q}$ satisfy
	\begin{align*}
	(\bm{q},\bm{r}_1)_{\mathcal{T}_h}-(y,\nabla\cdot \bm{r}_1)_{\mathcal{T}_h}+\left\langle{y},\bm r_1\cdot \bm n \right\rangle_{\partial {\mathcal{T}_h}\backslash\varepsilon_h^\partial} &=- \left\langle g,\bm r_1\cdot \bm n \right\rangle_{\varepsilon_h^\partial},\\
	-(\bm{q}+\bm{\beta} y,\nabla w_1)_{\mathcal{T}_h}-(\nabla \cdot \bm{\beta} y, w_1)_{\mathcal{T}_h}  & \\
	+\left\langle \bm q\cdot \bm n ,w_1\right\rangle_{\partial {\mathcal{T}_h}}+ \left\langle \bm \beta \cdot \bm{n} y,w_1\right\rangle_{\partial {\mathcal{T}_h}\backslash\varepsilon_h^\partial}  &= 	-\left\langle \bm \beta \cdot \bm{n} g,w_1\right\rangle_{\varepsilon_h^\partial} + (f + u,w_1)_{\mathcal{T}_h},\\
	\left\langle ({\bm{q}}+\bm{\beta} y)\cdot \bm{n},\mu_1\right\rangle_{\partial {\mathcal{T}_h}\backslash \varepsilon_h^{\partial}}&=0,
	\end{align*}
	for all $(\bm{r}_1,w_1,\mu_1)\in\bm{V}_h\times W_h\times M_h(o)$. Therefore,
	\begin{align*}
	\hspace{1em}&\hspace{-1em}   \mathscr B_1 (\bm \Pi {\bm q},\Pi { y}, P_M  y, \bm r_1, w_1, \mu_1) \\
	&=-\left\langle g,\bm r_1\cdot \bm n \right\rangle_{\varepsilon_h^{\partial}} - \left\langle \bm{\beta}\cdot \bm n g,w_1\right\rangle_{\varepsilon_h^{\partial}} + (f + u,w_1)_{\mathcal T_h} +  ( \bm \beta \delta^y, \nabla w_1)_{{\mathcal{T}_h}} \\
	&\quad+(\nabla\cdot\bm{\beta}\delta^y,w_1)_{\mathcal T_h} - \langle \delta^{\bm q}\cdot\bm n, w_1 \rangle_{\partial{{\mathcal{T}_h}}}+\langle h^{-1} P_M\Pi { y} +\tau_1 \Pi y, w_1 \rangle_{\partial{{\mathcal{T}_h}}} \\
	&\quad- \langle {\bm \beta}\cdot\bm n \delta^{\widehat y}, w_1 \rangle_{\partial{{\mathcal{T}_h}}\backslash\varepsilon_h^{\partial}}  - \langle (h^{-1}+\tau_1) P_M  y, w_1 \rangle_{\partial{{\mathcal{T}_h}}\backslash \varepsilon_h^{\partial}} + \langle \delta^{\bm q}\cdot\bm n, \mu_1 \rangle_{\partial{{\mathcal{T}_h}}\backslash\varepsilon_h^{\partial}}\\
	&\quad+\langle {\bm \beta}\cdot\bm n \delta^{\widehat y}, \mu_1 \rangle_{\partial{{\mathcal{T}_h}}\backslash\varepsilon_h^{\partial}}+ \langle h^{-1} P_M\delta^y, \mu_1 \rangle_{\partial{{\mathcal{T}_h}}\backslash\varepsilon_h^{\partial}}+ \langle \tau_1 (\delta^y-\delta^{\widehat y}), \mu_1 \rangle_{\partial{{\mathcal{T}_h}}\backslash\varepsilon_h^{\partial}}.
	\end{align*}
	Subtracting part 1 of the auxiliary problem \eqref{HDG_u_a} from the above equality gives the result:
	\begin{align*}
	\hspace{1em}&\hspace{-1em}  \mathscr B_1 (\varepsilon_h^{\bm q},\varepsilon_h^{ y}, \varepsilon_h^{\widehat y},\bm r_1, w_1, \mu_1) \\
	& =  ( \bm \beta \delta^y, \nabla w_1)_{{\mathcal{T}_h}}
	+(\nabla\cdot\bm{\beta}\delta^y,w_1)_{\mathcal T_h} - \langle \delta^{\bm q}\cdot\bm n, w_1 \rangle_{\partial{{\mathcal{T}_h}}} +\langle h^{-1} P_M\Pi { y}, w_1 \rangle_{\partial{{\mathcal{T}_h}}}\\
	&  \quad+\langle \tau_1 \Pi y, w_1 \rangle_{\partial{{\mathcal{T}_h}}}- \langle {\bm \beta}\cdot\bm n \delta^{\widehat y}, w_1 \rangle_{\partial{{\mathcal{T}_h}}}  - \langle (h^{-1}+\tau_1) P_M  y, w_1 \rangle_{\partial{{\mathcal{T}_h}}}\\
	&\quad+ \langle \delta^{\bm q}\cdot\bm n, \mu_1 \rangle_{\partial{{\mathcal{T}_h}}\backslash\varepsilon_h^{\partial}}+\langle {\bm \beta}\cdot\bm n \delta^{\widehat y}, \mu_1 \rangle_{\partial{{\mathcal{T}_h}}\backslash\varepsilon_h^{\partial}}+ \langle h^{-1} P_M\delta^y, \mu_1 \rangle_{\partial{{\mathcal{T}_h}}\backslash\varepsilon_h^{\partial}}\\
	&\quad+ \langle \tau_1 (\delta^y-\delta^{\widehat y}), \mu_1 \rangle_{\partial{{\mathcal{T}_h}}\backslash\varepsilon_h^{\partial}}\\
	&= ( \bm \beta \delta^y, \nabla w_1)_{{\mathcal{T}_h}}
	+(\nabla\cdot\bm{\beta}\delta^y,w_1)_{\mathcal T_h} - \langle \widehat{\bm \delta}_1, w_1 \rangle_{\partial{{\mathcal{T}_h}}} + \langle \widehat{\bm \delta}_1, \mu_1 \rangle_{\partial{{\mathcal{T}_h}}\backslash \varepsilon_h^{\partial}}.
	\end{align*}
\end{proof}

\subsubsection{Step 2: Estimate for $\varepsilon_h^{\boldmath q}$.}  The following key inequality is found in \cite{MR3440284}.
\begin{lemma}\label{nabla_ine}
	We have
	\begin{equation*}
	\|\nabla\varepsilon^y_h\|_{\mathcal T_h}+ h^{-\frac{1}{2}} \| {\varepsilon_h^y -\varepsilon_h^{\widehat y}}\|_{\partial \mathcal T_h}
	\lesssim \|\varepsilon^{\bm q}_h\|_{\mathcal T_h}+h^{-\frac1 2}\|P_M\varepsilon^y_h-\varepsilon^{\widehat y}_h\|_{\partial\mathcal T_h}.
	\end{equation*}
\end{lemma}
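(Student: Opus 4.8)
The plan is to isolate the ``$\bm r_1$-part'' of the error equation from Lemma~\ref{lemma:step1_first_lemma}. Taking $w_1 = 0$ and $\mu_1 = 0$ in \eqref{error_equation_L2k1} annihilates the entire right-hand side and, reading off the definition \eqref{def_B1} of $\mathscr B_1$, leaves for every $\bm r_1\in\bm V_h$ the relation
\[
(\varepsilon_h^{\bm q},\bm r_1)_{\mathcal T_h}-(\varepsilon_h^{y},\nabla\cdot\bm r_1)_{\mathcal T_h}+\langle\varepsilon_h^{\widehat y},\bm r_1\cdot\bm n\rangle_{\partial\mathcal T_h\backslash\varepsilon_h^\partial}=0.
\]
Integrating the second term by parts element by element and using $\varepsilon_h^{\widehat y}=0$ on $\varepsilon_h^\partial$ converts this into $(\varepsilon_h^{\bm q}+\nabla\varepsilon_h^y,\bm r_1)_{\mathcal T_h}=\langle\varepsilon_h^y-\varepsilon_h^{\widehat y},\bm r_1\cdot\bm n\rangle_{\partial\mathcal T_h}$. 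The crucial observation is that $\bm r_1\cdot\bm n|_e\in\mathcal P^k(e)$ on each face, so by the defining property \eqref{L2_projection} of $P_M$ I may replace $\varepsilon_h^y$ by $P_M\varepsilon_h^y$ in the boundary pairing, obtaining
\[
(\varepsilon_h^{\bm q}+\nabla\varepsilon_h^y,\bm r_1)_{\mathcal T_h}=\langle P_M\varepsilon_h^y-\varepsilon_h^{\widehat y},\bm r_1\cdot\bm n\rangle_{\partial\mathcal T_h}.
\]
This is the step that makes the ``correct'' quantity $P_M\varepsilon_h^y-\varepsilon_h^{\widehat y}$ appear directly and is what prevents the argument from becoming circular.

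Next I would choose $\bm r_1=\nabla\varepsilon_h^y$; this is admissible because $\varepsilon_h^y\in W_h$ has degree $k+1$, so $\nabla\varepsilon_h^y\in[\mathcal P^k]^d=\bm V_h$. This yields $\|\nabla\varepsilon_h^y\|_{\mathcal T_h}^2=-(\varepsilon_h^{\bm q},\nabla\varepsilon_h^y)_{\mathcal T_h}+\langle P_M\varepsilon_h^y-\varepsilon_h^{\widehat y},\nabla\varepsilon_h^y\cdot\bm n\rangle_{\partial\mathcal T_h}$. Applying Cauchy--Schwarz, the discrete trace inequality $\|\nabla\varepsilon_h^y\|_{\partial\mathcal T_h}\lesssim h^{-1/2}\|\nabla\varepsilon_h^y\|_{\mathcal T_h}$ from \eqref{classical_ine} (componentwise), and cancelling one factor of $\|\nabla\varepsilon_h^y\|_{\mathcal T_h}$ gives the gradient bound
\[
\|\nabla\varepsilon_h^y\|_{\mathcal T_h}\lesssim\|\varepsilon_h^{\bm q}\|_{\mathcal T_h}+h^{-\frac12}\|P_M\varepsilon_h^y-\varepsilon_h^{\widehat y}\|_{\partial\mathcal T_h}.
\]

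For the remaining jump term I would split $\varepsilon_h^y-\varepsilon_h^{\widehat y}=(\varepsilon_h^y-P_M\varepsilon_h^y)+(P_M\varepsilon_h^y-\varepsilon_h^{\widehat y})$ and control the first piece by the scaling estimate $\|w-P_Mw\|_{\partial K}\lesssim h^{1/2}\|\nabla w\|_K$ valid for $w\in\mathcal P^{k+1}(K)$. This follows by mapping to a reference element and invoking the equivalence of the two seminorms $w\mapsto\|w-P_Mw\|_{\partial\widehat K}$ and $w\mapsto\|\nabla w\|_{\widehat K}$ on the finite-dimensional space $\mathcal P^{k+1}(\widehat K)$, both of which vanish exactly on constants (this is where $k\ge0$ enters, so that $P_M$ reproduces constants), followed by the standard $h^{(d-1)/2}$ and $h^{(d-2)/2}$ scalings of the surface $L^2$ and volume gradient norms. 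Summing over $K$ gives $h^{-1/2}\|\varepsilon_h^y-\varepsilon_h^{\widehat y}\|_{\partial\mathcal T_h}\lesssim\|\nabla\varepsilon_h^y\|_{\mathcal T_h}+h^{-1/2}\|P_M\varepsilon_h^y-\varepsilon_h^{\widehat y}\|_{\partial\mathcal T_h}$, and substituting the gradient bound from the previous paragraph and adding the two estimates produces the claimed inequality.

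The step I expect to be the main obstacle is not computational but conceptual: recognizing the $P_M$-insertion in the boundary integral. Without it one would only bound $\|\nabla\varepsilon_h^y\|_{\mathcal T_h}$ in terms of $h^{-1/2}\|\varepsilon_h^y-\varepsilon_h^{\widehat y}\|_{\partial\mathcal T_h}$, and then the gradient bound and the jump bound would each depend on the other through constants that are not obviously less than one, so the two inequalities could not be combined. The $P_M$-insertion decouples this loop, and the scaling estimate for $\|w-P_Mw\|_{\partial K}$, while routine, must be stated carefully so that it exploits the extra degree ($k+1$ versus $k$) to gain the factor $h^{1/2}$.
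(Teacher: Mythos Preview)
Your argument is correct, and in fact supplies more than the paper does: the paper does not prove Lemma~\ref{nabla_ine} at all but simply imports it from \cite{MR3440284}. Your proof---isolating the $\bm r_1$-slot of the error equation, inserting $P_M$ via $\bm r_1\cdot\bm n\in\mathcal P^k(e)$, testing with $\bm r_1=\nabla\varepsilon_h^y\in\bm V_h$, and then handling the jump by the local estimate $\|w-P_Mw\|_{\partial K}\lesssim h^{1/2}\|\nabla w\|_K$---is exactly the kind of argument one finds in that reference, so there is no genuine methodological difference to compare.

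One small point of phrasing: in your scaling step you assert that the two seminorms $w\mapsto\|w-P_Mw\|_{\partial\widehat K}$ and $w\mapsto\|\nabla w\|_{\widehat K}$ on $\mathcal P^{k+1}(\widehat K)$ \emph{both} vanish exactly on constants. Strictly speaking only the containment $\ker(\|\nabla\cdot\|)\subset\ker(\|\cdot-P_M\cdot\|_{\partial\widehat K})$ is guaranteed (and in one dimension the latter kernel is the whole space). But that containment is all you need for the one-sided bound $\|w-P_Mw\|_{\partial\widehat K}\lesssim\|\nabla w\|_{\widehat K}$, so the argument stands. An alternative route that avoids the reference-element discussion altogether is to take $c$ to be the mean of $w$ over $K$, use $\|w-P_Mw\|_e\le\|w-c\|_e$, and then apply the multiplicative trace inequality together with the Poincar\'e inequality $\|w-c\|_K\lesssim h\|\nabla w\|_K$.
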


\begin{lemma}\label{lemma:step2_main_lemma}
	We have
	\begin{align}
	\norm{\varepsilon_h^{\bm{q}}}_{\mathcal{T}_h}+h^{-\frac 1 2}\|{P_M\varepsilon_h^y-\varepsilon_h^{\widehat{y}}}\|_{\partial \mathcal T_h} \lesssim  h^{k+1}(\|\bm q\|_{k+1,\Omega} + \|y\|_{k+2,\Omega}).
\end{align}
\end{lemma}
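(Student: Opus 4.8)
The plan is to derive an energy identity by choosing a clever test function in the error equation of Lemma \ref{lemma:step1_first_lemma}, then bound the resulting right-hand side using the projection estimates and Lemma \ref{nabla_ine}.

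The plan is to run the standard HDG energy argument on the error equation \eqref{error_equation_L2k1} and then invoke Lemma \ref{nabla_ine} to absorb the convection contribution. Concretely, I would take $(\bm r_1, w_1, \mu_1) = (\varepsilon_h^{\bm q}, \varepsilon_h^{y}, \varepsilon_h^{\widehat y})$ in \eqref{error_equation_L2k1}, so that the left-hand side becomes $\mathscr B_1(\varepsilon_h^{\bm q}, \varepsilon_h^{y}, \varepsilon_h^{\widehat y}; \varepsilon_h^{\bm q}, \varepsilon_h^{y}, \varepsilon_h^{\widehat y})$, which Lemma \ref{property_B} evaluates explicitly. Using \textbf{(A2)} together with \eqref{eqn:tau1_condition} (so the $\tau_1-\tfrac12\bm\beta\cdot\bm n$ terms are nonnegative), the sign condition $\nabla\cdot\bm\beta\le 0$ from \eqref{beta_con} (which makes $-\tfrac12(\nabla\cdot\bm\beta\,\varepsilon_h^{y},\varepsilon_h^{y})_{\mathcal T_h}\ge 0$), and $\varepsilon_h^{\widehat y}=0$ on $\varepsilon_h^{\partial}$, every term of the energy identity is nonnegative and
\[
\mathscr B_1(\varepsilon_h^{\bm q}, \varepsilon_h^{y}, \varepsilon_h^{\widehat y}; \varepsilon_h^{\bm q}, \varepsilon_h^{y}, \varepsilon_h^{\widehat y}) \ge \norm{\varepsilon_h^{\bm q}}_{\mathcal T_h}^2 + h^{-1}\norm{P_M\varepsilon_h^{y}-\varepsilon_h^{\widehat y}}_{\partial\mathcal T_h}^2,
\]
which is exactly the square of the quantity to be estimated.

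It then remains to bound the four terms on the right of \eqref{error_equation_L2k1}. For the convection volume terms $(\bm\beta\delta^y,\nabla\varepsilon_h^{y})_{\mathcal T_h}$ and $(\nabla\cdot\bm\beta\,\delta^y,\varepsilon_h^{y})_{\mathcal T_h}$ I would use Cauchy--Schwarz with the projection bound $\norm{\delta^y}_{\mathcal T_h}\lesssim h^{k+2}\norm{y}_{k+2,\Omega}$ from \eqref{classical_ine}. The factor $\norm{\nabla\varepsilon_h^{y}}_{\mathcal T_h}$ is \emph{not} part of the coercive energy, so Lemma \ref{nabla_ine} is essential here: it converts $\norm{\nabla\varepsilon_h^{y}}_{\mathcal T_h}$ into $\norm{\varepsilon_h^{\bm q}}_{\mathcal T_h}+h^{-1/2}\norm{P_M\varepsilon_h^{y}-\varepsilon_h^{\widehat y}}_{\partial\mathcal T_h}$, precisely the controlled quantities; the term with $\norm{\varepsilon_h^{y}}_{\mathcal T_h}$ is handled the same way (via a discrete Poincar\'e estimate combined with Lemma \ref{nabla_ine}, or via the $L^2$-orthogonality $(\delta^y,w)_K=0$ for $w\in\mathcal P^{k+1}(K)$ in \eqref{L2_projection}). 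Since $\norm{\delta^y}_{\mathcal T_h}=O(h^{k+2})$, Young's inequality renders these contributions $O(h^{2k+4})$ plus a small fraction of the energy.

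The delicate part, and what I expect to be the main obstacle, is the pair of trace terms $-\langle\widehat{\bm\delta}_1,\varepsilon_h^{y}\rangle_{\partial\mathcal T_h}+\langle\widehat{\bm\delta}_1,\varepsilon_h^{\widehat y}\rangle_{\partial\mathcal T_h\backslash\varepsilon_h^\partial}$, which I would rewrite as $-\langle\widehat{\bm\delta}_1,\varepsilon_h^{y}-\varepsilon_h^{\widehat y}\rangle_{\partial\mathcal T_h\backslash\varepsilon_h^\partial}-\langle\widehat{\bm\delta}_1,\varepsilon_h^{y}\rangle_{\varepsilon_h^\partial}$. A direct Cauchy--Schwarz against $\varepsilon_h^{y}-\varepsilon_h^{\widehat y}$ is fatal: the data satisfies only $\norm{\widehat{\bm\delta}_1}_{\partial\mathcal T_h}=O(h^{k+1/2})$ (the $\delta^{\bm q}\cdot\bm n$ and $h^{-1}P_M\delta^y$ pieces dominate), whereas the energy controls $\norm{\varepsilon_h^{y}-\varepsilon_h^{\widehat y}}_{\partial\mathcal T_h}$ only with an $O(1)$ weight, so Young would yield $O(h^{2k+1})$ and lose half an order. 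The remedy is to route the $O(h^{k+1/2})$ data through the $h^{-1}$-weighted quantity $\norm{P_M\varepsilon_h^{y}-\varepsilon_h^{\widehat y}}_{\partial\mathcal T_h}$: splitting $\varepsilon_h^{y}-\varepsilon_h^{\widehat y}=(\varepsilon_h^{y}-P_M\varepsilon_h^{y})+(P_M\varepsilon_h^{y}-\varepsilon_h^{\widehat y})$ and using that the $\mathcal P^{k}$-part of $\widehat{\bm\delta}_1$ (notably $h^{-1}P_M\delta^y$) is $L^2(e)$-orthogonal to $\varepsilon_h^{y}-P_M\varepsilon_h^{y}$, together with $\langle\delta^{\widehat y},\mu\rangle_e=0$ for $\mu\in\mathcal P^{k}(e)$, the dominant data pairs with $P_M\varepsilon_h^{y}-\varepsilon_h^{\widehat y}$. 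The weighted Young step $\langle\widehat{\bm\delta}_1,P_M\varepsilon_h^{y}-\varepsilon_h^{\widehat y}\rangle\le h\norm{\widehat{\bm\delta}_1}_{\partial\mathcal T_h}^2+\tfrac14 h^{-1}\norm{P_M\varepsilon_h^{y}-\varepsilon_h^{\widehat y}}_{\partial\mathcal T_h}^2$ then produces $O(h\cdot h^{2k+1})=O(h^{2k+2})$, recovering the optimal $h^{k+1}$ rate; the boundary piece on $\varepsilon_h^\partial$ is treated identically since there $\varepsilon_h^{\widehat y}=0$ and the energy controls $h^{-1}\norm{P_M\varepsilon_h^{y}}_{\varepsilon_h^\partial}^2$. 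Collecting the bounds and absorbing the energy fractions into the left-hand side completes the proof.
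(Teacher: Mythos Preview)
Your approach is correct and mirrors the paper's proof: test \eqref{error_equation_L2k1} with $(\varepsilon_h^{\bm q},\varepsilon_h^y,\varepsilon_h^{\widehat y})$, invoke Lemma~\ref{property_B} for coercivity, then bound the right-hand side via Lemma~\ref{nabla_ine}, the projection estimates \eqref{classical_ine}, and Young's inequality.

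Two minor points where the paper is slightly cleaner. First, for the term $(\nabla\cdot\bm\beta\,\delta^y,\varepsilon_h^y)_{\mathcal T_h}$ the paper simply absorbs into the energy contribution $\tfrac12\|(-\nabla\cdot\bm\beta)^{1/2}\varepsilon_h^y\|_{\mathcal T_h}^2$; your $L^2$-orthogonality alternative does not work as stated because $\nabla\cdot\bm\beta$ multiplies $\delta^y$ and is not piecewise polynomial, though your discrete-Poincar\'e route is fine. Second, the trace term is less delicate than you indicate: Lemma~\ref{nabla_ine} (which you already invoke) furnishes $h^{-1/2}\|\varepsilon_h^y-\varepsilon_h^{\widehat y}\|_{\partial\mathcal T_h}\lesssim\|\varepsilon_h^{\bm q}\|_{\mathcal T_h}+h^{-1/2}\|P_M\varepsilon_h^y-\varepsilon_h^{\widehat y}\|_{\partial\mathcal T_h}$, so the paper handles the $\delta^{\bm q}\cdot\bm n$, $\bm\beta\cdot\bm n\,\delta^{\widehat y}$, and $\tau_1(\delta^y-\delta^{\widehat y})$ pieces of $\widehat{\bm\delta}_1$ by a direct $h^{1/2}$-weighted Cauchy--Schwarz against $\varepsilon_h^y-\varepsilon_h^{\widehat y}$ (its $T_4$), reserving the orthogonality transfer to $P_M\varepsilon_h^y-\varepsilon_h^{\widehat y}$ only for the $h^{-1}P_M\delta^y$ piece (its $T_5$), exactly as you outline.
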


\begin{proof}
	First, since $\varepsilon_h^{\widehat y}=0$ on $\varepsilon_h^\partial$, the energy identity for $ \mathscr B_1 $ in Lemma \ref{property_B} gives
	\begin{align*}
	\hspace{2em}&\hspace{-2em}  \mathscr B(\varepsilon_h^{\bm q},\varepsilon_h^{ y}, \varepsilon_h^{\widehat y}, \varepsilon_h^{\bm q},\varepsilon_h^{ y}, \varepsilon_h^{\widehat y})\\ &=(\varepsilon_h^{\bm{q}},\varepsilon_h^{\bm{q}})_{\mathcal{T}_h}+h^{-1} \|{P_M\varepsilon_h^y-\varepsilon_h^{\widehat{y}}}\|_{\partial \mathcal T_h}^2 +\frac 1 2\| (-\nabla\cdot\bm{\beta})^{\frac  1 2}\varepsilon_h^y\|_{\mathcal T_h}^2\\
	& \quad + \|(\tau_1-\frac 12 \bm{\beta} \cdot\bm n)^{\frac 12 } (\varepsilon_h^y-\varepsilon_h^{\widehat{y}})\|_{\partial\mathcal T_h}^2.
	\end{align*}
	Taking $(\bm r_1, w_1,\mu_1) = (\bm \varepsilon_h^{\bm q},\varepsilon_h^y,\varepsilon_h^{\widehat y})$ in \eqref{error_equation_L2k1} in Lemma \ref{lemma:step1_first_lemma} gives
	\begin{equation}\label{step2_2}
	\begin{split}
	(\varepsilon_h^{\bm{q}},\varepsilon_h^{\bm{q}})_{\mathcal{T}_h}&+h^{-1} \|{P_M\varepsilon_h^y-\varepsilon_h^{\widehat{y}}}\|_{\partial \mathcal T_h}^2 +\frac 12\| (-\nabla\cdot\bm{\beta})^{\frac  1 2}\varepsilon_h^y\|_{\mathcal T_h}^2\\
	&\le  ( \bm \beta \delta^y, \nabla \varepsilon_h^y)_{{\mathcal{T}_h}}
	+(\nabla\cdot\bm{\beta}\delta^y,\varepsilon_h^y)_{\mathcal T_h} -\langle \widehat {\bm\delta}_1,\varepsilon_h^y - \varepsilon_h^{\widehat y}\rangle_{\partial\mathcal T_h} \\
	&=: T_1 + T_2 + T_3.
	\end{split}
	\end{equation}
	For the terms $T_1$ and $T_2$, apply Lemma \ref{nabla_ine} and Young's inequality to give
	\begin{align*}
	T_1 &=  ( \bm \beta \delta^y, \nabla \varepsilon_h^y)_{{\mathcal{T}_h}} \le C\|\bm{\beta}\|_{0,\infty,\Omega}^2 \| \delta^y\|_{\mathcal T_h}^2 + \frac 1 4
	\|\varepsilon_h^{\bm{q}}\|_{\mathcal T_h}^2 + \frac 1 {4h} \|{P_M\varepsilon_h^y-\varepsilon_h^{\widehat{y}}}\|_{\partial \mathcal T_h}^2,\\
	T_2 &= (\nabla\cdot\bm{\beta}\delta^y,\varepsilon_h^y)_{\mathcal T_h} \le C \|\delta^y\|_{\mathcal T_h}^2 + \frac 12\|(-\nabla\cdot\bm{\beta})^{\frac 1 2} \varepsilon_h^y\|_{\mathcal T_h}^2.
	\end{align*}
	For the term $T_3$,
	\begin{align*}
	T_3 &= - \langle \widehat {\bm\delta}_1,\varepsilon_h^y - \varepsilon_h^{\widehat y}\rangle_{\partial\mathcal T_h} \\
	&= -\langle \delta^{\bm q}\cdot\bm n+h^{-1} P_M \delta^y + \bm{\beta}\cdot\bm n \delta^{\widehat y} + \tau_1(\delta^y - \delta^{\widehat y}), \varepsilon_h^y - \varepsilon_h^{\widehat y}\rangle_{\partial\mathcal T_h} \\
	& = -\langle \delta^{\bm q}\cdot\bm n + \bm{\beta}\cdot\bm n \delta^{\widehat y} + \tau_1(\delta^y - \delta^{\widehat y}), \varepsilon_h^y - \varepsilon_h^{\widehat y}\rangle_{\partial\mathcal T_h}  -\langle h^{-1} P_M \delta^y, \varepsilon_h^y - \varepsilon_h^{\widehat y}\rangle_{\partial\mathcal T_h}\\
	& =: T_4+T_5.
	\end{align*}
	Applying Lemma \ref{nabla_ine} and Young's inequality again gives
	\begin{align*}
	T_4  &= -\langle \delta^{\bm q}\cdot\bm n + \bm{\beta}\cdot\bm n \delta^{\widehat y} + \tau_1(\delta^y - \delta^{\widehat y}), \varepsilon_h^y - \varepsilon_h^{\widehat y}\rangle_{\partial\mathcal T_h} \\
	&\le C\| h^{1/2}(\delta^{\bm q}\cdot\bm n + \bm{\beta}\cdot\bm n \delta^{\widehat y} + \tau_1(\delta^y - \delta^{\widehat y}))\|_{\partial\mathcal T_h}^2 + \frac 1 C\| h^{-1/2} (\varepsilon_h^y - \varepsilon_h^{\widehat y})\|_{\partial\mathcal T_h}^2\\
	&\le C\| h^{1/2}(\delta^{\bm q}\cdot\bm n + \bm{\beta}\cdot\bm n \delta^{\widehat y} + \tau_1(\delta^y - \delta^{\widehat y}))\|_{\partial\mathcal T_h}^2  + \frac 1 4
	\|\varepsilon_h^{\bm{q}}\|_{\mathcal T_h}^2\\
	&\quad + \frac 1 {4h} \|{P_M\varepsilon_h^y-\varepsilon_h^{\widehat{y}}}\|_{\partial \mathcal T_h}^2.
	\end{align*}
	Finally, for the term $T_5$, we have
	\begin{align*}
	T_5 &= -\langle h^{-1} P_M \delta^y, \varepsilon_h^y - \varepsilon_h^{\widehat y}\rangle_{\partial\mathcal T_h}  = \langle h^{-1}  \delta^y, P_M\varepsilon_h^y - \varepsilon_h^{\widehat y}\rangle_{\partial\mathcal T_h} \\
	&\le  4\|h^{-1/2}\delta^y\|_{\partial\mathcal T_h}^2 +  \frac 1 {4h} \|{P_M\varepsilon_h^y-\varepsilon_h^{\widehat{y}}}\|_{\partial \mathcal T_h}^2.
	\end{align*}
	Sum all the estimates for $\{T_i\}_{i=1}^5$ to obtain
	\begin{align*}
	\hspace{3em}&\hspace{-3em} \|\varepsilon_h^{\bm{q}}\|_{\mathcal{T}_h}^2+h^{-1}\|{P_M\varepsilon_h^y-\varepsilon_h^{\widehat{y}}}\|_{\partial \mathcal T_h}^2 \\
	& \lesssim h\norm{\delta^{\bm q}}_{\partial\mathcal T_h}^2 + h^{-1}
	\norm{\delta^{y}}_{\partial\mathcal T_h}^2 + h
	\|{\delta^{\widehat y}}\|_{\partial\mathcal T_h}^2,\\
	&\lesssim h^{2k+2}(\|\bm q\|_{k+1,\Omega}^2 + \|y\|_{k+2,\Omega}^2).
	\end{align*}
\end{proof}

\subsubsection{Step 3: Estimate for $\varepsilon_h^{y}$ by a duality argument.} 
\label{subsec:proof_step3} 

Next, for any given $\Theta$ in $L^2(\Omega)$ the dual problem is given by
\begin{equation}\label{Dual_PDE}
\begin{split}
\bm\Phi-\nabla\Psi&=0\qquad~\text{in}\ \Omega,\\
\nabla\cdot\bm{\Phi}+\nabla\cdot(\bm\beta\Psi)&=\Theta \qquad\text{in}\ \Omega,\\
\Psi&=0\qquad~\text{on}\ \partial\Omega.
\end{split}
\end{equation}
Since the domain $\Omega$ is convex, we have the following regularity estimate
\begin{align}
\norm{\bm \Phi}_{1,\Omega} + \norm{\Psi}_{2,\Omega} \le C_{\text{reg}} \norm{\Theta}_\Omega,
\end{align}
We use the following quantities in the proof below to estimate $\varepsilon_h^y$:
\begin{align}
\delta^{\bm \Phi} &=\bm \Phi-{\bm\Pi} \bm \Phi, \quad \delta^\Psi=\Psi- {\Pi} \Psi, \quad
\delta^{\widehat \Psi} = \Psi-P_M\Psi.
\end{align}
\begin{lemma}\label{e_sec}
	We have
	\begin{align*}
	\|\varepsilon_h^y\|_{\mathcal T_h} \lesssim  h^{k+1+\min\{k,1\}} (\|\bm q\|_{k+1,\Omega} + \|y\|_{k+2,\Omega}).
	\end{align*}
\end{lemma}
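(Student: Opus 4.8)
The plan is an Aubin--Nitsche duality argument built on the dual problem \eqref{Dual_PDE} with data $\Theta=\varepsilon_h^y$. With this choice the regularity estimate gives $\|\bm\Phi\|_{1,\Omega}+\|\Psi\|_{2,\Omega}\lesssim\|\varepsilon_h^y\|_{\mathcal T_h}$, and since $\nabla\cdot\bm\Phi+\nabla\cdot(\bm\beta\Psi)=\Theta$ we have $\|\varepsilon_h^y\|_{\mathcal T_h}^2=(\varepsilon_h^y,\Theta)_{\mathcal T_h}$. The whole argument consists in rewriting this quantity as a sum of products of primal and dual projection errors, each of which is controlled either by the classical bounds \eqref{classical_ine} or by the Step 2 estimate in Lemma \ref{lemma:step2_main_lemma}.

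First I would exploit the skew-symmetry relation of Lemma \ref{identical_equa}, whose proof uses only algebra and condition \textbf{(A1)} and therefore holds for arbitrary arguments. Applying it with $(\varepsilon_h^{\bm q},\varepsilon_h^y,\varepsilon_h^{\widehat y})$ in the first slot and the dual projections $(\bm\Pi\bm\Phi,\Pi\Psi,P_M\Psi)$ in the second (note $P_M\Psi\in M_h(o)$ because $\Psi|_{\partial\Omega}=0$) gives
$$\mathscr B_1(\varepsilon_h^{\bm q},\varepsilon_h^y,\varepsilon_h^{\widehat y};\bm\Pi\bm\Phi,-\Pi\Psi,-P_M\Psi)=-\mathscr B_2(\bm\Pi\bm\Phi,\Pi\Psi,P_M\Psi;-\varepsilon_h^{\bm q},\varepsilon_h^y,\varepsilon_h^{\widehat y}).$$
The left-hand side I would evaluate with the error equation \eqref{error_equation_L2k1} of Lemma \ref{lemma:step1_first_lemma}, choosing the test functions $(\bm r_1,w_1,\mu_1)=(\bm\Pi\bm\Phi,-\Pi\Psi,-P_M\Psi)$; this produces only the volume terms $(\bm\beta\delta^y,\nabla\Pi\Psi)_{\mathcal T_h}$, $(\nabla\cdot\bm\beta\,\delta^y,\Pi\Psi)_{\mathcal T_h}$ and the face pairings of $\widehat{\bm\delta}_1$ against $\Pi\Psi$ and $P_M\Psi$.

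The main obstacle is the right-hand side, namely establishing the dual analogue of Lemma \ref{lemma:step1_first_lemma}: that testing $\mathscr B_2$ at the dual projections against the primal errors reproduces $(\varepsilon_h^y,\Theta)_{\mathcal T_h}$ up to dual projection-error corrections. I would expand $\mathscr B_2(\bm\Pi\bm\Phi,\Pi\Psi,P_M\Psi;-\varepsilon_h^{\bm q},\varepsilon_h^y,\varepsilon_h^{\widehat y})$ from the definition \eqref{def_B2}, insert the commuting properties \eqref{L2_projection} of the projections, integrate by parts element-by-element, and use that $(\bm\Phi,\Psi)$ solve \eqref{Dual_PDE} weakly together with $\bm\Phi=\nabla\Psi$, $\Psi=0$ on $\partial\Omega$, and the single-valuedness of the traces. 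The delicate point is making the adjoint convection term $\nabla\cdot(\bm\beta\Psi)$ and the stabilization contributions telescope exactly, which is precisely where condition \textbf{(A1)} enters, just as in Lemmas \ref{property_B} and \ref{identical_equa}; the outcome should be $-(\varepsilon_h^y,\Theta)_{\mathcal T_h}$ plus terms pairing $\varepsilon_h^{\bm q}$, $\varepsilon_h^y$, and $P_M\varepsilon_h^y-\varepsilon_h^{\widehat y}$ with $\delta^{\bm\Phi}$, $\delta^\Psi$, $\delta^{\widehat\Psi}$.

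Finally I would combine the two evaluations and bound every resulting term. The dual factors obey $\|\delta^{\bm\Phi}\|_{\mathcal T_h}\lesssim h\|\bm\Phi\|_{1,\Omega}$ and $\|\delta^\Psi\|_{\mathcal T_h}\lesssim h^2\|\Psi\|_{2,\Omega}$ (with the corresponding trace bounds from \eqref{classical_ine}), because the dual solution is only $H^2$-regular on the convex domain; the primal factors contribute $O(h^{k+1})$ through Lemma \ref{lemma:step2_main_lemma} and the projection bounds for $\delta^{\bm q}$, $\delta^y$, $\delta^{\widehat y}$. Each product therefore carries a factor $h^{k+1}\cdot h^{\min\{k,1\}}$: one gains a full extra power once $k\ge 1$, the gain being capped at one by the dual regularity, whereas at $k=0$ the reduced lowest-order spaces prevent any gain --- this is exactly the delicate case in which \cite{MR3440284} required correction. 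Invoking $\|\bm\Phi\|_{1,\Omega}+\|\Psi\|_{2,\Omega}\lesssim\|\varepsilon_h^y\|_{\mathcal T_h}$ and cancelling one power of $\|\varepsilon_h^y\|_{\mathcal T_h}$ yields the claimed bound.
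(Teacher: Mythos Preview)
Your proposal is correct and follows the same Aubin--Nitsche duality strategy as the paper, arriving at the same collection of residual terms and identifying the same bottleneck (the $\delta^\Psi-\delta^{\widehat\Psi}$ pairing responsible for the $h^{\min\{k,1\}}$ loss, which is exactly the paper's term $R_5$). The only difference is organizational: the paper never invokes Lemma~\ref{identical_equa} here but instead computes $\mathscr B_1(\varepsilon_h^{\bm q},\varepsilon_h^y,\varepsilon_h^{\widehat y};\bm\Pi\bm\Phi,\Pi\Psi,P_M\Psi)$ in two ways --- once via the error equation \eqref{error_equation_L2k1} and once by direct expansion from \eqref{def_B1} together with the dual PDE \eqref{Dual_PDE} and integration by parts --- and equates the two to obtain $\|\varepsilon_h^y\|_{\mathcal T_h}^2=R_1+\cdots+R_5$. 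Your detour through the skew-symmetry identity to $\mathscr B_2$ is equivalent in principle (the identity is purely algebraic, as you note), but be aware that the dual flux convention $\bm\Phi=+\nabla\Psi$ in \eqref{Dual_PDE} does not match the sign $\bm p=-\nabla z$ built into $\mathscr B_2$, so the ``dual consistency'' calculation you flag as the main obstacle does not telescope quite as cleanly as its $\mathscr B_1$ counterpart; the paper's direct expansion of $\mathscr B_1$ sidesteps this and is slightly shorter.
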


\begin{proof}
	Consider the dual problem \eqref{Dual_PDE} and let $\Theta =- \varepsilon_h^y$.  Take  $(\bm r_1,w_1,\mu_1) = ( {\bm\Pi}\bm{\Phi},{\Pi}\Psi,P_M\Psi)$ in \eqref{error_equation_L2k1} in Lemma \ref{lemma:step1_first_lemma},   and since $\Psi=0$ on $\varepsilon_h^{\partial}$, we have
	\begin{align*}
	\hspace{1em}&\hspace{-1em} \mathscr B_1 (\varepsilon^{\bm q}_h,\varepsilon^y_h,\varepsilon^{\widehat y}_h;{\bm\Pi}\bm{\Phi},{\Pi}\Psi,P_M\Psi) \\
	&= (\varepsilon^{\bm q}_h,{\bm\Pi}\bm{\Phi})_{\mathcal T_h}-( \varepsilon^y_h,\nabla\cdot{\bm\Pi}\bm{\Phi})_{\mathcal T_h}+\langle  \varepsilon^{\widehat y}_h,{\bm\Pi}\bm{\Phi}\cdot\bm n\rangle_{\partial\mathcal T_h\backslash \varepsilon_h^\partial}-(\varepsilon^{\bm q}_h+\bm \beta\varepsilon^y_h,  \nabla {\Pi}\Psi)_{\mathcal T_h}\\
	&\quad-(\nabla\cdot\bm\beta \varepsilon^y_h,{\Pi}\Psi)_{\mathcal T_h}+\langle \varepsilon^{\bm q}_h\cdot\bm n +h^{-1}P_M \varepsilon^y_h +\tau_1 \varepsilon^y_h ,{\Pi}\Psi\rangle_{\partial\mathcal T_h}\\
	&\quad+\langle (\bm\beta\cdot\bm n -h^{-1}-\tau_1) \varepsilon^{\widehat y}_h,{\Pi}\Psi\rangle_{\partial\mathcal T_h\backslash \varepsilon_h^\partial}\\
	&\quad-\langle  \varepsilon^{\bm q}_h\cdot\bm n+\bm \beta\cdot\bm n\varepsilon^{\widehat y}_h +h^{-1}(P_M\varepsilon^y_h-\varepsilon^{\widehat y}_h) + \tau_1(\varepsilon^y_h - \varepsilon^{\widehat y}_h),P_M\Psi\rangle_{\partial\mathcal T_h\backslash\varepsilon^{\partial}_h}\\
	&= (\varepsilon^{\bm q}_h,\bm{\Phi})_{\mathcal T_h}-( \varepsilon^y_h,\nabla\cdot \bm{\Phi})_{\mathcal T_h} + ( \varepsilon^y_h,\nabla\cdot\delta^{\bm{\Phi}})_{\mathcal T_h}-\langle  \varepsilon^{\widehat y}_h,\delta^{\bm{\Phi}}\cdot\bm n\rangle_{\partial\mathcal T_h}\\
	&\quad-(\varepsilon^{\bm q}_h+\bm \beta\varepsilon^y_h,  \nabla \Psi)_{\mathcal T_h}+(\varepsilon^{\bm q}_h+\bm \beta\varepsilon^y_h,  \nabla \delta^\Psi)_{\mathcal T_h} - (\nabla\cdot\bm\beta \varepsilon^y_h,\Psi)_{\mathcal T_h} + (\nabla\cdot\bm\beta \varepsilon^y_h,\delta^\Psi)_{\mathcal T_h}\\
	& \quad- \langle  \varepsilon^{\bm q}_h\cdot\bm n+\bm \beta\cdot\bm n\varepsilon^{\widehat y}_h +h^{-1}(P_M\varepsilon^y_h-\varepsilon^{\widehat y}_h) + \tau_1(\varepsilon^y_h - \varepsilon^{\widehat y}_h),\delta^\Psi - \delta^{\widehat\Psi}\rangle_{\partial\mathcal T_h}.
	\end{align*}
	Here we used $\langle\varepsilon^{\widehat y}_h,\bm \Phi\cdot\bm n\rangle_{\partial\mathcal T_h}=0$, which holds since $\varepsilon^{\widehat y}_h$ is a single-valued function on interior edges and $\varepsilon^{\widehat y}_h=0$ on $\varepsilon^{\partial}_h$. 
	
	Next, integration by parts gives
	\begin{equation}\label{inde_eq2}
	\begin{split}
	(\varepsilon^y_h,\nabla\cdot\delta^{\bm \Phi})_{\mathcal{T}_h}
	&=\langle \varepsilon^y_h,\delta^{\bm \Phi} \cdot\bm n\rangle_{\partial\mathcal T_h}-(\nabla\varepsilon^y_h,\delta^{\bm \Phi})_{\mathcal{T}_h} = \langle \varepsilon^y_h,\delta^{\bm \Phi}\cdot\bm n\rangle_{\partial\mathcal T_h},\\
	(\varepsilon^{\bm q}_h, \nabla \delta^{ \Psi})_{\mathcal{T}_h}&=\langle \varepsilon^{\bm q}_h \cdot\bm n, \delta^{ \Psi}\rangle_{\partial\mathcal T_h}-(\nabla\cdot \varepsilon^{\bm q}_h , \delta^{ \Psi})_{\mathcal T_h} = \langle \varepsilon^{\bm q}_h \cdot\bm n, \delta^{ \Psi}\rangle_{\partial\mathcal T_h},\\
	(\bm\beta \varepsilon_h^y, \nabla \delta^{ \Psi})_{\mathcal{T}_h}&=\langle \bm{\beta} \cdot\bm n \varepsilon_h^y, \delta^{ \Psi}\rangle_{\partial\mathcal T_h}- (\nabla\cdot \bm{\beta} \varepsilon_h^y, \delta^{ \Psi})_{\mathcal T_h}  - (\bm{\beta} \nabla\varepsilon_h^y, \delta^{ \Psi})_{\mathcal T_h}. 
	\end{split}
	\end{equation}
	We have
	\begin{align*}
	\hspace{3em}&\hspace{-3em}  \mathscr B_1 (\varepsilon^{\bm q}_h,\varepsilon^y_h,\varepsilon^{\widehat y}_h;{\bm\Pi}\bm{\Phi},{\Pi}\Psi,P_M\Psi)\\
	&=\|  \varepsilon_h^y\|_{\mathcal T_h}^2 + \langle \varepsilon^y_h - \varepsilon^{\widehat y}_h,\delta^{\bm \Phi}\cdot\bm n +\bm{\beta}\cdot\bm n  \delta^{\Psi} \rangle_{\partial\mathcal T_h} - (\nabla \varepsilon_h^y,\bm{\beta}\delta^{\Psi})_{\mathcal T_h}\\
	& \quad- \langle   h^{-1}(P_M\varepsilon^y_h-\varepsilon^{\widehat y}_h) + \tau_1(\varepsilon^y_h-\varepsilon^{\widehat y}_h),\delta^\Psi - \delta^{\widehat\Psi}\rangle_{\partial\mathcal T_h}.
	\end{align*}
	On the other hand, since $\Psi = 0$ on $\varepsilon_h^\partial $ the error equation \eqref{error_equation_L2k1} in Lemma \ref{lemma:step1_first_lemma} gives
	\begin{align*}
	\hspace{3em}&\hspace{-3em} \mathscr B_1 (\varepsilon^{\bm q}_h,\varepsilon^y_h,\varepsilon^{\widehat y}_h;{\bm\Pi}\bm{\Phi},{\Pi}\Psi,P_M\Psi)\\
	&=( \bm \beta \delta^y, \nabla {\Pi}\Psi)_{{\mathcal{T}_h}}
	+(\nabla\cdot\bm{\beta}\delta^y,{\Pi}\Psi)_{\mathcal T_h} + \langle \widehat{\bm \delta}_1,\delta^{\Psi} - \delta^{\widehat \Psi} \rangle_{\partial{{\mathcal{T}_h}}}.
	\end{align*}
	Comparing the above two equalities, we get
	\begin{align*}
	& \|  \varepsilon_h^y\|_{\mathcal T_h}^2\\
	  & \ \ = - \langle \varepsilon^y_h - \varepsilon^{\widehat y}_h,\delta^{\bm \Phi}\cdot\bm n +\bm{\beta}\cdot\bm n  \delta^{\Psi} \rangle_{\partial\mathcal T_h} + (\nabla \varepsilon_h^y,\bm{\beta}\delta^{\Psi})_{{\mathcal{T}_h}}+( \bm \beta \delta^y, \nabla {\Pi}\Psi)_{{\mathcal{T}_h}}\\
	& \qquad +(\nabla\cdot\bm{\beta}\delta^y,{\Pi}\Psi)_{\mathcal T_h}
	+\langle   h^{-1}(P_M\varepsilon^y_h-\varepsilon^{\widehat y}_h) + \tau_1(\varepsilon^y_h-\varepsilon^{\widehat y}_h) + \widehat{\bm \delta}_1,\delta^\Psi - \delta^{\widehat\Psi}\rangle_{\partial\mathcal T_h}\\
	& \ \ =:R_1+R_2+R_3+R_4+R_5.
	\end{align*}
	For the terms $R_1$ and $R_2$,  Lemma \ref{nabla_ine} and Lemma \ref{lemma:step2_main_lemma} give
	\begin{align*}
	R_1 &= - \langle \varepsilon^y_h - \varepsilon^{\widehat y}_h,\delta^{\bm \Phi}\cdot\bm n +\bm{\beta}\cdot\bm n  \delta^{\Psi} \rangle_{\partial\mathcal T_h} \\
	& \le    h^{-\frac 1 2}\|\varepsilon^y_h - \varepsilon^{\widehat y}_h\|_{\partial \mathcal T_h} ~h^{\frac 1 2} \|\delta^{\bm \Phi}\cdot\bm n +\bm{\beta}\cdot\bm n  \delta^{\Psi}\|_{\partial\mathcal T_h}\\
	& \le   h^{-\frac 1 2}\|\varepsilon^y_h - \varepsilon^{\widehat y}_h\|_{\partial \mathcal T_h}  \|\delta^{\bm \Phi}\cdot\bm n +\bm{\beta}\cdot\bm n  \delta^{\Psi}\|_{\mathcal T_h}\\
	& \le  C h^{-\frac 1 2}\|\varepsilon^y_h - \varepsilon^{\widehat y}_h\|_{\partial \mathcal T_h}  (\|\delta^{\bm \Phi}\|_{\mathcal T_h}+ \|  \delta^{\Psi}\|_{\mathcal T_h})\\
	&\le Ch^{k+2}(\|\bm q\|_{k+1,\Omega} + \|y\|_{k+2,\Omega})  \|\varepsilon_h^y\|_{\mathcal T_h},\\
	\bigskip 
	R_2 &=(\nabla \varepsilon_h^y,\bm{\beta}\delta^{\Psi})_{{\mathcal{T}_h}} \le C \|\nabla \varepsilon_h^y\|_{{\mathcal{T}_h}} \|\delta^{\Psi}\|_{{\mathcal{T}_h}}\\
	& \le Ch^{k+2}(\|\bm q\|_{k+1,\Omega} + \|y\|_{k+2,\Omega}) \|\varepsilon_h^y\|_{\mathcal T_h}.
	\end{align*}
	By a simple application of the triangle inequality for the terms $R_3$ and $R_4$, we have 
	\begin{align*}
	R_3 &=( \bm \beta \delta^y, \nabla {\Pi}\Psi)_{{\mathcal{T}_h}}\le  C\|\delta^y\|_{\mathcal T_h} \|\nabla {\Pi}\Psi\|_{\mathcal T_h}\le C\|\delta^y\|_{\mathcal T_h} ( \|\nabla \delta^\Psi\|_{\mathcal T_h} +\|\nabla\Psi\|_{\mathcal T_h})\\
	&\le C\|\delta^y\|_{\mathcal T_h} (h\|\Psi\|_{2,\Omega} + \|\Psi\|_{1,\Omega}  ) \le C\|\delta^y\|_{\mathcal T_h} \|\Psi\|_{2,\Omega}\\
	&\le Ch^{k+2}(\|\bm q\|_{k+1,\Omega} + \|y\|_{k+2,\Omega})\|\varepsilon_h^y\|_{\mathcal T_h},\\
	R_4 &=(\nabla\cdot\bm{\beta}\delta^y,{\Pi}\Psi)_{\mathcal T_h}\le  C\|\delta^y\|_{\mathcal T_h} \| {\Pi}\Psi\|_{\mathcal T_h}\le C\|\delta^y\|_{\mathcal T_h}  ( \| \delta^\Psi\|_{\mathcal T_h} +\|\Psi\|_{\mathcal T_h})\\
	&\le C\|\delta^y\|_{\mathcal T_h} (h^2\|\Psi\|_{2,\Omega} +\|\Psi\|_{\Omega}  ) \le C\|\delta^y\|_{\mathcal T_h} \|\Psi\|_{2,\Omega}\\
	&\le Ch^{k+2}(\|\bm q\|_{k+1,\Omega} + \|y\|_{k+2,\Omega})\|\varepsilon_h^y\|_{\mathcal T_h}.
	\end{align*}
	For the terms $R_1$ to $R_4$, we obtain the optimal convergence rate for $k\ge 0$. However, we only get the optimal convergence rate for $R_5$ when $k\ge 1$.
	\begin{align*}
	R_5 &=\langle   h^{-1}(P_M\varepsilon^y_h-\varepsilon^{\widehat y}_h) + \tau_1(\varepsilon^y_h-\varepsilon^{\widehat y}_h) + \widehat{\bm \delta}_1,\delta^\Psi - \delta^{\widehat\Psi}\rangle_{\partial\mathcal T_h}\\
	& \le  \|h^{-1}(P_M\varepsilon^y_h-\varepsilon^{\widehat y}_h) + \tau_1(\varepsilon^y_h-\varepsilon^{\widehat y}_h) + \widehat{\bm \delta}_1\|_{\partial\mathcal T_h} \|\delta^\Psi - \delta^{\widehat\Psi}\|_{\partial\mathcal T_h}\\
	&\le C(h^{-1}\|(P_M\varepsilon^y_h-\varepsilon^{\widehat y}_h) \|_{\partial\mathcal T_h} + \| \varepsilon^y_h-\varepsilon^{\widehat y}_h\|_{\partial\mathcal T_h}+ \|\widehat{\bm \delta}_1\|_{\partial\mathcal T_h} )\|\delta^\Psi - \delta^{\widehat\Psi}\|_{\partial\mathcal T_h}.
	\end{align*}
	It is straightforward to get
	\begin{align*}
	\hspace{3em}&\hspace{-3em}  h^{-1}\|(P_M\varepsilon^y_h-\varepsilon^{\widehat y}_h) \|_{\partial\mathcal T_h} + \| \varepsilon^y_h-\varepsilon^{\widehat y}_h\|_{\partial\mathcal T_h}+ \|\widehat{\bm \delta}_1\|_{\partial\mathcal T_h}\\
	& \le C h^{k+\frac 1 2} (\|\bm q\|_{k+1,\Omega} + \|y\|_{k+2,\Omega}),
	\end{align*}
	and 
	\begin{align*}
	\|\delta^\Psi - \delta^{\widehat\Psi}\|_{\partial\mathcal T_h} \le C h^{\min\{k,1\}+\frac 1 2}  \|\varepsilon_h^y\|_{\mathcal T_h}.
	\end{align*}
	This gives
	\begin{align*}
	R_5 \le Ch^{k+1+\min\{k,1\}} (\|\bm q\|_{k+1,\Omega} + \|y\|_{k+2,\Omega})  \|\varepsilon_h^y\|_{\mathcal T_h}.
	\end{align*}
	Finally, we complete the proof  by summing the estimates for $R_1$ to $R_5$.
\end{proof}

The triangle inequality gives convergence rates for $\|\bm q -\bm q_h(u)\|_{\mathcal T_h}$ and $\|y -y_h(u)\|_{\mathcal T_h}$:
\begin{lemma}\label{lemma:step3_conv_rates}
	\begin{subequations}
		\begin{align}
		\|\bm q -\bm q_h(u)\|_{\mathcal T_h}&\le \|\delta^{\bm q}\|_{\mathcal T_h} + \|\varepsilon_h^{\bm q}\|_{\mathcal T_h} \nonumber\\
		& \lesssim h^{k+1} (\|\bm q\|_{k+1,\Omega} + \|y\|_{k+2,\Omega}),\label{error_qu}\\
		\|y -y_h(u)\|_{\mathcal T_h}&\le \|\delta^{y}\|_{\mathcal T_h} + \|\varepsilon_h^{y}\|_{\mathcal T_h}\nonumber\\ 
		&\lesssim  h^{k+1+\min\{k,1\}} (\|\bm q\|_{k+1,\Omega} + \|y\|_{k+2,\Omega})\label{error_yu}.
		\end{align}
	\end{subequations}
\end{lemma}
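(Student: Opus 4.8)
The plan is to recognize that this lemma is essentially an \emph{assembly step}: all of the analytic work has already been carried out in Steps 1--3, and what remains is to combine those bounds through the triangle inequality. Using the notation introduced in \eqref{notation_1}, I would first insert the $L^2$ projections $\bm\Pi\bm q$ and $\Pi y$ into the errors to obtain the orthogonal-style splittings
\begin{align*}
\bm q-\bm q_h(u)=(\bm q-\bm\Pi\bm q)+(\bm\Pi\bm q-\bm q_h(u))=\delta^{\bm q}+\varepsilon_h^{\bm q},\qquad
y-y_h(u)=\delta^{y}+\varepsilon_h^{y}.
\end{align*}
The triangle inequality then immediately yields the two displayed upper bounds $\|\delta^{\bm q}\|_{\mathcal T_h}+\|\varepsilon_h^{\bm q}\|_{\mathcal T_h}$ and $\|\delta^{y}\|_{\mathcal T_h}+\|\varepsilon_h^{y}\|_{\mathcal T_h}$, so the entire content of the lemma is in estimating each summand.

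For the flux, I would bound the projection part $\|\delta^{\bm q}\|_{\mathcal T_h}\lesssim h^{k+1}\|\bm q\|_{k+1,\Omega}$ by the classical estimate \eqref{classical_ine}, and the discrete part $\|\varepsilon_h^{\bm q}\|_{\mathcal T_h}\lesssim h^{k+1}(\|\bm q\|_{k+1,\Omega}+\|y\|_{k+2,\Omega})$ directly from Lemma \ref{lemma:step2_main_lemma}. Since both contributions are of order $h^{k+1}$, their sum is $O(h^{k+1})$, which is exactly \eqref{error_qu}. For the state, I would use \eqref{classical_ine} to get $\|\delta^{y}\|_{\mathcal T_h}\lesssim h^{k+2}\|y\|_{k+2,\Omega}$ and invoke Lemma \ref{e_sec} for $\|\varepsilon_h^{y}\|_{\mathcal T_h}\lesssim h^{k+1+\min\{k,1\}}(\|\bm q\|_{k+1,\Omega}+\|y\|_{k+2,\Omega})$.

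The only genuinely non-automatic point is determining which of the two state contributions dominates. Because $\min\{k,1\}\le 1$, we have $k+1+\min\{k,1\}\le k+2$, so for $h\le 1$ the bound $h^{k+1+\min\{k,1\}}$ is the larger of the two, and the $\varepsilon_h^{y}$ term governs the rate. (Concretely, when $k\ge 1$ both terms are $O(h^{k+2})$, while for $k=0$ the duality estimate degrades to $O(h)$ and dominates the $O(h^{2})$ projection term.) Hence the sum is $O(h^{k+1+\min\{k,1\}})$, giving \eqref{error_yu}. I do not expect any real obstacle here: the difficult superconvergence analysis lives entirely in the energy estimate of Lemma \ref{lemma:step2_main_lemma} and the duality argument of Lemma \ref{e_sec}, and this lemma simply records their consequences for the true errors via the triangle inequality.
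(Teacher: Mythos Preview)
Your proposal is correct and matches the paper's approach exactly: the paper treats this lemma as an immediate consequence of the triangle inequality together with the projection estimates \eqref{classical_ine}, Lemma \ref{lemma:step2_main_lemma}, and Lemma \ref{e_sec}, and does not even record a separate proof. Your observation that $k+1+\min\{k,1\}\le k+2$ so the $\varepsilon_h^{y}$ term dominates is precisely the implicit comparison needed.
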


\subsubsection{Step 4: The error equation for part 2 of the auxiliary problem \eqref{HDG_u_b}.}  Next, we consider the dual variables, i.e., the state $ z $ and the flux $ \bm{p} $, and bound the error between the solutions of part 2 of the auxiliary problem and the mixed form \eqref{mixed_a}-\eqref{mixed_d} of the optimality system.  Define
%
%
\begin{equation}\label{notation_3}
\begin{split}
\delta^{\bm p} &=\bm p-{\bm\Pi}\bm p,  \qquad\qquad\qquad \qquad\qquad\qquad\;\;\;\;\varepsilon^{\bm p}_h={\bm\Pi} \bm p-\bm p_h(u),\\
\delta^z&=z- {\Pi} z, \qquad\qquad\qquad \qquad\qquad\qquad\;\;\;\; \;\varepsilon^{z}_h={\Pi} z-z_h(u),\\
\delta^{\widehat z} &= z-P_Mz,  \qquad\qquad\qquad\qquad\qquad\qquad \;\;\; \varepsilon^{\widehat z}_h=P_M z-\widehat{z}_h(u),\\
\widehat {\bm\delta}_2 &= \delta^{\bm p}\cdot\bm n+h^{-1} P_M \delta^z + \bm{\beta}\cdot\bm n \delta^{\widehat z} + \tau_2(\delta^z - \delta^{\widehat z}).
\end{split}
\end{equation}


\begin{lemma}\label{lemma:step4_first_lemma}
	We have
	\begin{align}
	\mathscr B_2 (\varepsilon_h^{\bm p},\varepsilon_h^{z}, \varepsilon_h^{\widehat z}, \bm r_2, w_2, \mu_2) &=( \bm \beta \delta^z, \nabla w_2)_{{\mathcal{T}_h}} - \langle \widehat{\bm \delta}_2, w_2 \rangle_{\partial{{\mathcal{T}_h}}} + \langle \widehat{\bm \delta}_2, \mu_2 \rangle_{\partial{{\mathcal{T}_h}}\backslash \varepsilon_h^{\partial}}\nonumber\\
	&\quad+(y-y_h(u),w_2)_{\mathcal T_h}.\label{error_equation_L2k2}
	\end{align}
\end{lemma}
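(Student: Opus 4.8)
The plan is to mirror the proof of Lemma \ref{lemma:step1_first_lemma} almost line for line, replacing $\mathscr B_1$ by $\mathscr B_2$, the state/flux pair $(y,\bm q)$ by the dual pair $(z,\bm p)$, and the stabilization $\tau_1$ by $\tau_2$, while keeping careful track of the reversed sign of the convection term in the dual equation. First I would evaluate $\mathscr B_2(\bm\Pi\bm p,\Pi z,P_M z;\bm r_2,w_2,\mu_2)$ by substituting the first three arguments into the definition \eqref{def_B2} and then invoking the orthogonality of the $L^2$ projections \eqref{L2_projection}. As in Step 1, each projected quantity is split as (exact) minus (projection error), producing the volume term $(\bm\beta\delta^z,\nabla w_2)_{\mathcal T_h}$ together with the face contributions $\delta^{\bm p}\cdot\bm n$, $h^{-1}P_M\delta^z$, $\bm\beta\cdot\bm n\,\delta^{\widehat z}$, and $\tau_2(\delta^z-\delta^{\widehat z})$, which are precisely the pieces assembled into $\widehat{\bm\delta}_2$ in \eqref{notation_3}.

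Next I would use that the exact dual state $z$ and flux $\bm p$ satisfy the element-wise mixed identities obtained from \eqref{mixed_c}--\eqref{mixed_d}, namely
\begin{align*}
(\bm p,\bm r_2)_{\mathcal T_h}-(z,\nabla\cdot\bm r_2)_{\mathcal T_h}+\langle z,\bm r_2\cdot\bm n\rangle_{\partial\mathcal T_h\backslash\varepsilon_h^\partial}&=0,\\
-(\bm p-\bm\beta z,\nabla w_2)_{\mathcal T_h}+\langle\bm p\cdot\bm n,w_2\rangle_{\partial\mathcal T_h}-\langle\bm\beta\cdot\bm n\,z,w_2\rangle_{\partial\mathcal T_h\backslash\varepsilon_h^\partial}+(y,w_2)_{\mathcal T_h}&=(y_d,w_2)_{\mathcal T_h},\\
\langle(\bm p-\bm\beta z)\cdot\bm n,\mu_2\rangle_{\partial\mathcal T_h\backslash\varepsilon_h^\partial}&=0,
\end{align*}
which hold for all $(\bm r_2,w_2,\mu_2)\in\bm V_h\times W_h\times M_h(o)$ and in which the boundary condition $z=0$ on $\partial\Omega$ has been used to discard the $\varepsilon_h^\partial$ contributions (consistent with $\varepsilon_h^{\widehat z}=0$ on $\varepsilon_h^\partial$). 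Substituting these identities collapses all exact-solution terms and leaves the right-hand side $(\bm\beta\delta^z,\nabla w_2)_{\mathcal T_h}-\langle\widehat{\bm\delta}_2,w_2\rangle_{\partial\mathcal T_h}+\langle\widehat{\bm\delta}_2,\mu_2\rangle_{\partial\mathcal T_h\backslash\varepsilon_h^\partial}$, still carrying the source $(y_d,w_2)_{\mathcal T_h}$.

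Finally I would subtract part 2 of the auxiliary problem \eqref{HDG_u_b}, whose right-hand side is $(y_d-y_h(u),w_2)_{\mathcal T_h}$. Because the exact problem carries $(y_d-y,w_2)_{\mathcal T_h}$ and the auxiliary problem carries $(y_d-y_h(u),w_2)_{\mathcal T_h}$, the $y_d$ terms cancel and the discrepancy between $y$ and $y_h(u)$ survives as the extra term $(y-y_h(u),w_2)_{\mathcal T_h}$ on the right of \eqref{error_equation_L2k2}; linearity of $\mathscr B_2$ in its first three arguments then turns the left-hand side into $\mathscr B_2(\varepsilon_h^{\bm p},\varepsilon_h^z,\varepsilon_h^{\widehat z};\bm r_2,w_2,\mu_2)$.

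The only genuine obstacle here is bookkeeping rather than ideas: I must be vigilant about the reversed sign of $\bm\beta$ in $\mathscr B_2$ (compare the $-(\bm p-\bm\beta z,\nabla w_2)$ term against $-(\bm q+\bm\beta y,\nabla w_2)$ in $\mathscr B_1$) and about the absence of any $\nabla\cdot\bm\beta$ volume term, so that the expression emerging on the faces matches $\widehat{\bm\delta}_2$ in \eqref{notation_3} with $\tau_2$ in place of $\tau_1$ and no $(\nabla\cdot\bm\beta\,\delta^z,w_2)$ contribution. The distinguishing feature compared with Step 1 is precisely the coupling term $(y-y_h(u),w_2)_{\mathcal T_h}$, which in later steps will be controlled using the Step 3 bound \eqref{error_yu} on $\|y-y_h(u)\|_{\mathcal T_h}$.
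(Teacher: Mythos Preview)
Your proposal is correct and matches the paper's approach exactly: the paper omits the proof entirely, stating only that it is similar to the proof of Lemma~\ref{lemma:step1_first_lemma}, which is precisely the mirroring strategy you describe. Your identification of the two structural differences---the absence of a $(\nabla\cdot\bm\beta\,\delta^z,w_2)_{\mathcal T_h}$ term (since $\mathscr B_2$ carries no $-(\nabla\cdot\bm\beta\,z_h,w_2)$ volume contribution) and the appearance of the coupling term $(y-y_h(u),w_2)_{\mathcal T_h}$ from the mismatch of source data---is exactly right.
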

The proof is similar to the proof of Lemma \ref{lemma:step1_first_lemma} and is omitted.

\subsubsection{Step 5: Estimate for $\varepsilon_h^{\boldmath p}$.}
The following discrete Poincar{\'e} inequality can be found in \cite{MR3440284}.
\begin{lemma}\label{lemma:discr_Poincare_ineq}
	We have
	\begin{align}\label{poin_in}
	\|\varepsilon_h^z\|_{\mathcal T_h} \le C(\|\nabla \varepsilon_h^z\|_{\mathcal T_h} + h^{-\frac 1 2} \|\varepsilon_h^z - \varepsilon_h^{\widehat z}\|_{\partial\mathcal T_h}).
	\end{align}
\end{lemma}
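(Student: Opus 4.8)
The plan is to establish this discrete Poincar\'e inequality by a duality argument against an auxiliary Poisson problem, using the convexity of $\Omega$ for elliptic regularity. This is the homogeneous ($z=0$ on $\partial\Omega$) counterpart of the setup in Step~1, so recall that $\varepsilon_h^{\widehat z}$ is single-valued on interior faces and satisfies $\varepsilon_h^{\widehat z}=0$ on $\varepsilon_h^\partial$; these two facts are what make the argument work.

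First I would introduce the auxiliary problem: given $\varepsilon_h^z\in W_h$, let $\phi\in H_0^1(\Omega)$ solve $-\Delta\phi=\varepsilon_h^z$ in $\Omega$. Since $\Omega$ is convex, $\phi\in H^2(\Omega)$ with the regularity bound $\|\phi\|_{2,\Omega}\le C\|\varepsilon_h^z\|_{\mathcal T_h}$. Testing the equation against $\varepsilon_h^z$ and integrating by parts on each element gives
\[
\|\varepsilon_h^z\|_{\mathcal T_h}^2=(\varepsilon_h^z,-\Delta\phi)_{\mathcal T_h}=(\nabla\varepsilon_h^z,\nabla\phi)_{\mathcal T_h}-\langle\varepsilon_h^z,\nabla\phi\cdot\bm n\rangle_{\partial\mathcal T_h}.
\]
The crucial step is the boundary-term cancellation: because $\phi\in H^2(\Omega)$, the normal flux $\nabla\phi\cdot\bm n$ is single-valued across interior faces, while $\varepsilon_h^{\widehat z}$ is single-valued on interior faces and vanishes on $\varepsilon_h^\partial$, so the contributions from the two elements sharing each interior face cancel and $\langle\varepsilon_h^{\widehat z},\nabla\phi\cdot\bm n\rangle_{\partial\mathcal T_h}=0$. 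Subtracting this identity rewrites the face term using the HDG-natural difference $\varepsilon_h^z-\varepsilon_h^{\widehat z}$:
\[
\|\varepsilon_h^z\|_{\mathcal T_h}^2=(\nabla\varepsilon_h^z,\nabla\phi)_{\mathcal T_h}-\langle\varepsilon_h^z-\varepsilon_h^{\widehat z},\nabla\phi\cdot\bm n\rangle_{\partial\mathcal T_h}.
\]

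Then I would bound the two terms. Cauchy--Schwarz and Poincar\'e give $(\nabla\varepsilon_h^z,\nabla\phi)_{\mathcal T_h}\le\|\nabla\varepsilon_h^z\|_{\mathcal T_h}\|\phi\|_{1,\Omega}\le C\|\nabla\varepsilon_h^z\|_{\mathcal T_h}\|\varepsilon_h^z\|_{\mathcal T_h}$. For the face term, Cauchy--Schwarz together with the elementwise trace inequality $\|\nabla\phi\|_{\partial K}^2\le C(h_K^{-1}\|\nabla\phi\|_K^2+h_K|\nabla\phi|_{1,K}^2)$, summed over $K\in\mathcal T_h$ and combined with $h\le 1$ and the regularity estimate, yields $\|\nabla\phi\cdot\bm n\|_{\partial\mathcal T_h}\le Ch^{-\frac12}\|\varepsilon_h^z\|_{\mathcal T_h}$, so that $\langle\varepsilon_h^z-\varepsilon_h^{\widehat z},\nabla\phi\cdot\bm n\rangle_{\partial\mathcal T_h}\le Ch^{-\frac12}\|\varepsilon_h^z-\varepsilon_h^{\widehat z}\|_{\partial\mathcal T_h}\|\varepsilon_h^z\|_{\mathcal T_h}$. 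Adding the two bounds and dividing by $\|\varepsilon_h^z\|_{\mathcal T_h}$ (the case $\varepsilon_h^z=0$ being trivial) produces exactly \eqref{poin_in}.

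I expect the main obstacle to be the cancellation step: one must verify carefully that $\langle\varepsilon_h^{\widehat z},\nabla\phi\cdot\bm n\rangle_{\partial\mathcal T_h}=0$, which hinges jointly on the $H^2$-regularity of $\phi$ (continuity of $\nabla\phi\cdot\bm n$ across interior faces), on $\varepsilon_h^{\widehat z}$ being single-valued interiorly, and on $\varepsilon_h^{\widehat z}=0$ on the boundary faces. The only other delicate point is tracking the correct power of $h$ in the trace inequality so that the factor $h^{-\frac12}$ in the statement emerges precisely; everything else is routine Cauchy--Schwarz and an appeal to the convex-domain regularity already assumed in this section.
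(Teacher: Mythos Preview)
Your argument is correct. Note, however, that the paper does not supply its own proof of this lemma: it simply records the discrete Poincar\'e inequality and cites \cite{MR3440284} for the result. So there is no ``paper's proof'' to compare against; you have filled in what the authors chose to outsource.

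Your route---duality against the auxiliary Poisson problem $-\Delta\phi=\varepsilon_h^z$, $\phi|_{\partial\Omega}=0$, combined with convex-domain $H^2$-regularity, the cancellation $\langle\varepsilon_h^{\widehat z},\nabla\phi\cdot\bm n\rangle_{\partial\mathcal T_h}=0$, and the elementwise trace inequality---is the standard one for such inequalities in the HDG literature and is exactly the kind of argument the cited reference uses. The two points you flag as delicate (the face cancellation and the $h^{-1/2}$ scaling) are handled correctly: the cancellation holds because $\nabla\phi\in [H^1(\Omega)]^d$ has a single-valued trace on each face while the outward normals from the two adjacent elements are opposite, and the trace bound $\|\nabla\phi\cdot\bm n\|_{\partial\mathcal T_h}^2\lesssim h^{-1}\|\nabla\phi\|_{\mathcal T_h}^2+h|\phi|_{2,\Omega}^2\lesssim h^{-1}\|\varepsilon_h^z\|_{\mathcal T_h}^2$ follows directly from the regularity estimate. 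One minor remark: in bounding $(\nabla\varepsilon_h^z,\nabla\phi)_{\mathcal T_h}$ you could use the energy estimate $\|\nabla\phi\|_\Omega\le C\|\varepsilon_h^z\|_{\mathcal T_h}$ directly (test $-\Delta\phi=\varepsilon_h^z$ with $\phi$ and apply the continuous Poincar\'e inequality) rather than invoking the full $H^2$-bound, but this changes nothing materially.
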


\begin{lemma}\label{lemma:step5_main_lemma}
	We have
	\begin{subequations}
		\begin{align}
	\hspace{3em}&\hspace{-3em} \norm{\varepsilon_h^{\bm{p}}}_{\mathcal{T}_h}+h^{-\frac 1 2}\|{P_M\varepsilon_h^z-\varepsilon_h^{\widehat{z}}}\|_{\partial \mathcal T_h} \nonumber \\
	&\lesssim  h^{k+1}(\|\bm q\|_{k+1,\Omega} + \|y\|_{k+2,\Omega} + \|\bm p\|_{k+1,\Omega} + \|z\|_{k+2,\Omega}),\label{error_p}\\
\norm{\varepsilon_h^{{z}}}_{\mathcal{T}_h}&\lesssim h^{k+1}(\|\bm q\|_{k+1,\Omega} + \|y\|_{k+2,\Omega} + \|\bm p\|_{k+1,\Omega} + \|z\|_{k+2,\Omega}).\label{error_z}
\end{align}
	\end{subequations}
\end{lemma}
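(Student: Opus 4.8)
The plan is to establish \eqref{error_p} by an energy argument that parallels Step~2 (Lemma~\ref{lemma:step2_main_lemma}) and then to read off \eqref{error_z} as a direct consequence. First I would take $(\bm r_2, w_2, \mu_2) = (\varepsilon_h^{\bm p}, \varepsilon_h^z, \varepsilon_h^{\widehat z})$ in the error equation \eqref{error_equation_L2k2} of Lemma~\ref{lemma:step4_first_lemma} and equate the result with the energy identity for $\mathscr B_2$ from Lemma~\ref{property_B}. Because $\varepsilon_h^{\widehat z} = 0$ on $\varepsilon_h^\partial$ and $\varepsilon_h^{\widehat z}$ is single-valued on interior faces, the two interface contributions collapse to $-\langle \widehat{\bm\delta}_2, \varepsilon_h^z - \varepsilon_h^{\widehat z}\rangle_{\partial\mathcal T_h}$, giving
\begin{align*}
\norm{\varepsilon_h^{\bm p}}_{\mathcal T_h}^2 &+ h^{-1}\norm{P_M\varepsilon_h^z - \varepsilon_h^{\widehat z}}_{\partial\mathcal T_h}^2 + \tfrac 12\norm{(-\nabla\cdot\bm\beta)^{1/2}\varepsilon_h^z}_{\mathcal T_h}^2 + \norm{(\tau_2 + \tfrac 12\bm\beta\cdot\bm n)^{1/2}(\varepsilon_h^z - \varepsilon_h^{\widehat z})}_{\partial\mathcal T_h}^2 \\
&= (\bm\beta\delta^z, \nabla\varepsilon_h^z)_{\mathcal T_h} - \langle \widehat{\bm\delta}_2, \varepsilon_h^z - \varepsilon_h^{\widehat z}\rangle_{\partial\mathcal T_h} + (y - y_h(u), \varepsilon_h^z)_{\mathcal T_h},
\end{align*}
where condition \eqref{eqn:tau1_condition} makes the fourth term on the left nonnegative.

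The first two terms on the right are structurally identical to $T_1$ and $T_3$ (hence $T_4$, $T_5$) in the proof of Lemma~\ref{lemma:step2_main_lemma}. Handling them requires the dual analog of Lemma~\ref{nabla_ine}, namely
\begin{align*}
\norm{\nabla\varepsilon_h^z}_{\mathcal T_h} + h^{-1/2}\norm{\varepsilon_h^z - \varepsilon_h^{\widehat z}}_{\partial\mathcal T_h} \lesssim \norm{\varepsilon_h^{\bm p}}_{\mathcal T_h} + h^{-1/2}\norm{P_M\varepsilon_h^z - \varepsilon_h^{\widehat z}}_{\partial\mathcal T_h},
\end{align*}
which holds by the same reasoning, since the relation between $z_h(u)$ and $\bm p_h(u)$ in $\mathscr B_2$ mirrors that between $y_h(u)$ and $\bm q_h(u)$ in $\mathscr B_1$. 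Combined with Young's inequality and the projection bounds \eqref{classical_ine}, these two terms contribute at most $Ch^{2k+2}(\cdots)$ once small multiples of $\norm{\varepsilon_h^{\bm p}}_{\mathcal T_h}^2$ and $h^{-1}\norm{P_M\varepsilon_h^z - \varepsilon_h^{\widehat z}}_{\partial\mathcal T_h}^2$ are absorbed into the left-hand side.

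The genuinely new ingredient, and the main obstacle, is the coupling term $(y - y_h(u), \varepsilon_h^z)_{\mathcal T_h}$, which has no counterpart in Step~2; the difficulty is that $\norm{\varepsilon_h^z}_{\mathcal T_h}$ appears on the right before it has been bounded. I would resolve this by Cauchy--Schwarz followed by the discrete Poincar\'e inequality \eqref{poin_in} of Lemma~\ref{lemma:discr_Poincare_ineq} and the dual nabla inequality above, which together yield $\norm{\varepsilon_h^z}_{\mathcal T_h} \lesssim \norm{\varepsilon_h^{\bm p}}_{\mathcal T_h} + h^{-1/2}\norm{P_M\varepsilon_h^z - \varepsilon_h^{\widehat z}}_{\partial\mathcal T_h}$. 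A further application of Young's inequality absorbs these two factors into the left-hand side, leaving $C\norm{y - y_h(u)}_{\mathcal T_h}^2$; by \eqref{error_yu} of Lemma~\ref{lemma:step3_conv_rates} this is $O(h^{2(k+1+\min\{k,1\})})$, which is of higher order than $h^{2k+2}$ and hence harmless. Collecting the estimates proves \eqref{error_p}.

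Finally, \eqref{error_z} is immediate: the discrete Poincar\'e inequality and the dual nabla inequality give $\norm{\varepsilon_h^z}_{\mathcal T_h} \lesssim \norm{\varepsilon_h^{\bm p}}_{\mathcal T_h} + h^{-1/2}\norm{P_M\varepsilon_h^z - \varepsilon_h^{\widehat z}}_{\partial\mathcal T_h}$, and the right-hand side is already $O(h^{k+1})$ by \eqref{error_p}. Note that, in contrast to Step~3, no Aubin--Nitsche duality argument is needed here, since the lemma claims only the optimal rate $h^{k+1}$ for $\varepsilon_h^z$ rather than a superconvergent one.
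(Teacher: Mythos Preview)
Your proposal is correct and follows essentially the same route as the paper: test \eqref{error_equation_L2k2} with $(\varepsilon_h^{\bm p},\varepsilon_h^z,\varepsilon_h^{\widehat z})$, combine with the $\mathscr B_2$ energy identity, bound the $\delta^z$ and $\widehat{\bm\delta}_2$ terms exactly as in Lemma~\ref{lemma:step2_main_lemma} via the dual version of Lemma~\ref{nabla_ine}, and handle the new coupling term $(y-y_h(u),\varepsilon_h^z)_{\mathcal T_h}$ with Cauchy--Schwarz, the discrete Poincar\'e inequality, the dual nabla inequality, and absorption; \eqref{error_z} then follows from \eqref{error_p} by the same two inequalities. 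Your observation that no duality argument is needed at this stage is also exactly what the paper does---the Aubin--Nitsche step is deferred to Step~6.
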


\begin{proof}
	First, we note the key inequality in Lemma \ref{nabla_ine} can be applied with $ (z,\bm p, \hat z) $ replaced by $ (y,\bm q, \hat y) $.  This gives
	\begin{align}\label{nabla_z}
	\|\nabla \varepsilon_h^z\|_{\mathcal T_h} + h^{-\frac 1 2} \|\varepsilon_h^z - \varepsilon_h^{\widehat z}\|_{\partial\mathcal T_h} \lesssim \|\varepsilon^{\bm p}_h\|_{\mathcal T_h}+h^{-\frac1 2}\|P_M\varepsilon^z_h-\varepsilon^{\widehat z}_h\|_{\partial\mathcal T_h}.
	\end{align}

	Next, since $\varepsilon_h^{\widehat z}=0$ on $\varepsilon_h^\partial$, the energy identity for $ \mathscr B_2 $ in Lemma \ref{property_B} gives
	\begin{align*}
	 \hspace{3em}&\hspace{-3em} \mathscr B_2 (\varepsilon_h^{\bm p},\varepsilon_h^{ z}, \varepsilon_h^{\widehat z}, \varepsilon_h^{\bm p},\varepsilon_h^{z}, \varepsilon_h^{\widehat z})\\ &=(\varepsilon_h^{\bm{p}},\varepsilon_h^{\bm{p}})_{\mathcal{T}_h}+h^{-1} \|{P_M\varepsilon_h^z-\varepsilon_h^{\widehat{z}}}\|_{\partial \mathcal T_h}^2+\frac 1 2\| (-\nabla\cdot\bm{\beta})^{\frac  1 2}\varepsilon_h^z\|_{\mathcal T_h}^2\\
	 &\quad + \|(\tau_2+\frac 12 \bm{\beta} \cdot\bm n)^{\frac 12 } (\varepsilon_h^z-\varepsilon_h^{\widehat{z}})\|_{\partial\mathcal T_h}^2.
	 \end{align*}
	Then taking $(\bm r_2, w_2,\mu_2) = (\bm \varepsilon_h^{\bm p},\varepsilon_h^z,\varepsilon_h^{\widehat z})$ in \eqref{error_equation_L2k2} in Lemma \ref{lemma:step4_first_lemma} gives
	\begin{align*}
	(\varepsilon_h^{\bm{p}},\varepsilon_h^{\bm{p}})_{\mathcal{T}_h}&+h^{-1} \|{P_M\varepsilon_h^z-\varepsilon_h^{\widehat{z}}}\|_{\partial \mathcal T_h}^2 + \|(\tau_2+\frac 12 \bm{\beta} \cdot\bm n)^{\frac 12 } (\varepsilon_h^y-\varepsilon_h^{\widehat{y}})\|_{\partial\mathcal T_h}^2\\
	& \leq ( \bm \beta \delta^z, \nabla \varepsilon_h^z)_{{\mathcal{T}_h}}  -\langle \widehat {\bm\delta}_2,\varepsilon_h^z - \varepsilon_h^{\widehat z}\rangle_{\partial\mathcal T_h} + (y-y_h(u),\varepsilon_h^z)_{\mathcal T_h}\\
	& =: T_1+T_2+T_3.
	\end{align*}
	As in the proof of Lemma \ref{lemma:step2_main_lemma}, apply \eqref{nabla_z} and Young's inequality to obtain
	\begin{align*}
	T_1 &=  ( \bm \beta \delta^z, \nabla \varepsilon_h^z)_{{\mathcal{T}_h}}\\
	& \le C \| \delta^z\|_{\mathcal T_h}^2 + \frac 1 4
	\|\varepsilon_h^{\bm{p}}\|_{\mathcal T_h}^2 + \frac 1 {4h} \|{P_M\varepsilon_h^z-\varepsilon_h^{\widehat{z}}}\|_{\partial \mathcal T_h}^2,\\
	T_2 &=  -\langle \widehat {\bm\delta}_2,\varepsilon_h^z - \varepsilon_h^{\widehat z}\rangle_{\partial\mathcal T_h} \\
	&\le C(\|\delta^{\bm p}\|_{\mathcal T_h}^2 +h^{-2}\|\delta^z\|_{\mathcal T_h}^2 + h\|\delta^{\widehat z}\|_{\partial\mathcal T_h}^2 ) \\
	&\quad+ \frac 1 4
	\|\varepsilon_h^{\bm{p}}\|_{\mathcal T_h}^2 + \frac 1 {4h} \|{P_M\varepsilon_h^z-\varepsilon_h^{\widehat{z}}}\|_{\partial \mathcal T_h}^2.
	\end{align*}
	For the term $T_3$, we have
	\begin{align*}
	T_3&= (y-y_h(u),\varepsilon_h^z)_{\mathcal T_h} \le  \|y-y_h(u)\|_{\mathcal T_h} \|\varepsilon_h^z\|_{\mathcal T_h}\\
	&\le C\|y-y_h(u)\|_{\mathcal T_h} (\|\nabla \varepsilon_h^z\|_{\mathcal T_h} + h^{-\frac 1 2} \|\varepsilon_h^z - \varepsilon_h^{\widehat z}\|_{\partial\mathcal T_h})\\
	&\le  C\|y-y_h(u)\|_{\mathcal T_h} (\|\varepsilon^{\bm p}_h\|_{\mathcal T_h}+h^{-\frac1 2}\|P_M\varepsilon^z_h-\varepsilon^{\widehat z}_h\|_{\partial\mathcal T_h})\\
	&\le  C\|y-y_h(u)\|_{\mathcal T_h}^2 + \frac 1 4
	\|\varepsilon_h^{\bm{p}}\|_{\mathcal T_h}^2 + \frac 1 {4h} \|{P_M\varepsilon_h^z-\varepsilon_h^{\widehat{z}}}\|_{\partial \mathcal T_h}^2.
	\end{align*}
	Summing $T_1$ to $T_3$ gives
	\begin{align*}
	\hspace{3em}&\hspace{-3em}  \norm{\varepsilon_h^{\bm{p}}}_{\mathcal{T}_h}+h^{-\frac 1 2}\|{P_M\varepsilon_h^z-\varepsilon_h^{\widehat{z}}}\|_{\partial \mathcal T_h}\\
	 &\le Ch^{k+1}(\|\bm q\|_{k+1,\Omega} + \|y\|_{k+2,\Omega} + \|\bm p\|_{k+1,\Omega} + \|z\|_{k+2,\Omega}).
	\end{align*}
	Finally, \eqref{poin_in}, \eqref{error_p}, and \eqref{nabla_z} together imply \eqref{error_z}.
\end{proof}

\subsubsection{Step 6: Estimate for $\varepsilon_h^{z}$ by a duality argument.} 
\label{subsec:proof_step6} 

For $\Theta$ given in $L^2(\Omega)$, we consider the dual problem for $ z $:
\begin{equation}\label{Dual_PDE2}
\begin{split}
\bm\Phi-\nabla\Psi&=0\qquad~\text{in}\ \Omega,\\
\nabla\cdot\bm{\Phi}-\bm\beta\cdot\nabla\Psi&=\Theta \qquad\text{in}\ \Omega,\\
\Psi&=0\qquad~\text{on}\ \partial\Omega.
\end{split}
\end{equation}
Again since the domain $\Omega$ is convex, we have the regularity estimate
\begin{align}
\norm{\bm \Phi}_{1,\Omega} + \norm{\Psi}_{2,\Omega} \le C_{\text{reg}} \norm{\Theta}_\Omega,
\end{align}
Before we estimate  $\varepsilon_h^z$, we repeat the notation in \eqref{e_sec}:
\begin{align*}
\delta^{\bm \Phi} &=\bm \Phi-{\bm\Pi} \bm \Phi, \quad \delta^\Psi=\Psi- {\Pi} \Psi, \quad
\delta^{\widehat \Psi} = \Psi-P_M\Psi.
\end{align*}
\begin{lemma}\label{e_secz}
	We have
	\begin{align*}
	\|\varepsilon_h^z\|_{\mathcal T_h} \le Ch^{k+1+\min\{k,1\}}(\|\bm q\|_{k+1,\Omega} + \|y\|_{k+2,\Omega} + \|\bm p\|_{k+1,\Omega} + \|z\|_{k+2,\Omega}).
	\end{align*}
\end{lemma}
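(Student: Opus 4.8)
The plan is to mirror the duality argument of Step~3 (the proof of Lemma~\ref{e_sec}), replacing the primal dual problem \eqref{Dual_PDE} with its adjoint-state counterpart \eqref{Dual_PDE2} and taking $\Theta = -\varepsilon_h^z$. The convexity regularity estimate then gives $\|\bm\Phi\|_{1,\Omega} + \|\Psi\|_{2,\Omega} \le C_{\text{reg}}\|\varepsilon_h^z\|_{\mathcal T_h}$, and this factor $\|\varepsilon_h^z\|_{\mathcal T_h}$ is what every term on the right will ultimately carry, so that dividing through at the end produces the stated bound.

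First I would take $(\bm r_2, w_2, \mu_2) = (\bm\Pi\bm\Phi, \Pi\Psi, P_M\Psi)$ and evaluate $\mathscr B_2(\varepsilon_h^{\bm p}, \varepsilon_h^z, \varepsilon_h^{\widehat z}; \bm\Pi\bm\Phi, \Pi\Psi, P_M\Psi)$ in two ways. Expanding $\mathscr B_2$ by its definition \eqref{def_B2}, invoking the $L^2$-projection identities \eqref{L2_projection}, using $\bm\Phi = \nabla\Psi$ and $\nabla\cdot\bm\Phi - \bm\beta\cdot\nabla\Psi = -\varepsilon_h^z$, and applying the same integration-by-parts identities as in \eqref{inde_eq2} (now with $\bm\beta$ carrying the sign appropriate to the adjoint operator in \eqref{Dual_PDE2}), the inner product $(\varepsilon_h^z,\varepsilon_h^z)_{\mathcal T_h}$ emerges and the remaining contributions collect into a boundary term involving $\delta^{\bm\Phi}\cdot\bm n + \bm\beta\cdot\bm n\,\delta^\Psi$, a volume term $(\nabla\varepsilon_h^z, \bm\beta\delta^\Psi)_{\mathcal T_h}$, and a boundary duality term in $\delta^\Psi - \delta^{\widehat\Psi}$. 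Equating this with the right-hand side of the error equation \eqref{error_equation_L2k2} (Lemma~\ref{lemma:step4_first_lemma}), evaluated at the same test functions and simplified using $\Psi = 0$ on $\varepsilon_h^\partial$, then isolates $\|\varepsilon_h^z\|^2_{\mathcal T_h}$ as a sum of terms $R_1,\dots,R_5$ directly analogous to those of Step~3, together with one new contribution $R_6 = (y - y_h(u), \Pi\Psi)_{\mathcal T_h}$ arising from the extra source term in \eqref{error_equation_L2k2}.

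The terms $R_1$ through $R_5$ are bounded exactly as in the proof of Lemma~\ref{e_sec}, except that the Step~2 estimates are replaced by their Step~5 analogues: Lemma~\ref{lemma:step5_main_lemma} controls $\|\varepsilon_h^{\bm p}\|_{\mathcal T_h}$ and $h^{-1/2}\|P_M\varepsilon_h^z - \varepsilon_h^{\widehat z}\|_{\partial\mathcal T_h}$, while \eqref{nabla_z} controls $\|\nabla\varepsilon_h^z\|_{\mathcal T_h}$ and $h^{-1/2}\|\varepsilon_h^z - \varepsilon_h^{\widehat z}\|_{\partial\mathcal T_h}$. Just as in Step~3, the projection bounds \eqref{classical_ine} make $R_1$--$R_4$ each of order $h^{k+2}\|\varepsilon_h^z\|_{\mathcal T_h}$, whereas the boundary duality term $R_5$ is limited by $\|\delta^\Psi - \delta^{\widehat\Psi}\|_{\partial\mathcal T_h} \lesssim h^{\min\{k,1\}+1/2}\|\varepsilon_h^z\|_{\mathcal T_h}$ and therefore delivers only the rate $h^{k+1+\min\{k,1\}}\|\varepsilon_h^z\|_{\mathcal T_h}$; this is the term that fixes the overall convergence rate.

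The only genuinely new estimate is $R_6$. By Cauchy--Schwarz, the $L^2$-stability of $\Pi$, and the regularity estimate,
\[
R_6 \le \|y - y_h(u)\|_{\mathcal T_h}\,\|\Pi\Psi\|_{\mathcal T_h} \lesssim \|y - y_h(u)\|_{\mathcal T_h}\,\|\varepsilon_h^z\|_{\mathcal T_h},
\]
and the already-established bound \eqref{error_yu} of Lemma~\ref{lemma:step3_conv_rates} gives $\|y - y_h(u)\|_{\mathcal T_h} \lesssim h^{k+1+\min\{k,1\}}(\|\bm q\|_{k+1,\Omega} + \|y\|_{k+2,\Omega})$, so $R_6$ also fits within $h^{k+1+\min\{k,1\}}\|\varepsilon_h^z\|_{\mathcal T_h}$. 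Summing $R_1,\dots,R_6$ and cancelling one power of $\|\varepsilon_h^z\|_{\mathcal T_h}$ yields the claim. I expect the main obstacle to be bookkeeping rather than conceptual: one must verify that the sign of $\bm\beta$ in the adjoint dual problem \eqref{Dual_PDE2} makes the integration-by-parts terms telescope cleanly to $\|\varepsilon_h^z\|^2_{\mathcal T_h}$ with no leftover convective volume contribution, and that the coupling back through Steps~3--5 (the dependence of $R_6$ on $\|y - y_h(u)\|_{\mathcal T_h}$) is tracked with the correct extended norm on the right-hand side.
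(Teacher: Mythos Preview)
Your overall strategy is correct and matches the paper's proof: a duality argument using \eqref{Dual_PDE2}, expanding $\mathscr B_2$ at $(\bm\Pi\bm\Phi,\Pi\Psi,P_M\Psi)$ in two ways, and isolating $\|\varepsilon_h^z\|_{\mathcal T_h}^2$. Your treatment of the new source term $R_6=(y-y_h(u),\Pi\Psi)_{\mathcal T_h}$ via Lemma~\ref{lemma:step3_conv_rates} is exactly what the paper does (there labeled $S_6$).

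However, the bookkeeping expectation you flag at the end is wrong: the integration-by-parts terms do \emph{not} telescope cleanly with ``no leftover convective volume contribution.'' The operator $\mathscr B_2$, unlike $\mathscr B_1$, contains no explicit $-(\nabla\cdot\bm\beta\,z_h,w_2)$ term, and correspondingly the error equation \eqref{error_equation_L2k2} has no $(\nabla\cdot\bm\beta\,\delta^z,w_2)$ contribution. Consequently the cancellation that killed $(\nabla\cdot\bm\beta\,\varepsilon_h^y,\delta^\Psi)$ in Step~3 does not occur here, and one is left with an extra volume term $-(\nabla\cdot\bm\beta\,\varepsilon_h^z,\delta^\Psi)_{\mathcal T_h}$; this is the paper's $S_5$. (By the same token, the $R_4$-analogue $(\nabla\cdot\bm\beta\,\delta^z,\Pi\Psi)$ you anticipate does not actually appear.)

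This extra term is harmless. The paper bounds it by invoking the preliminary (non-superconvergent) estimate \eqref{error_z} from Lemma~\ref{lemma:step5_main_lemma}: since $\|\delta^\Psi\|_{\mathcal T_h}\lesssim h\|\varepsilon_h^z\|_{\mathcal T_h}$ and $\|\varepsilon_h^z\|_{\mathcal T_h}\lesssim h^{k+1}(\cdots)$, one gets $|S_5|\lesssim h^{k+2}(\cdots)\|\varepsilon_h^z\|_{\mathcal T_h}$. Alternatively, one may simply observe $|S_5|\lesssim h\|\varepsilon_h^z\|_{\mathcal T_h}^2$ and absorb it on the left for $h$ small. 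Either route closes the argument, but the decomposition is not a literal carbon copy of Step~3; the asymmetry between $\mathscr B_1$ and $\mathscr B_2$ in how $\nabla\cdot\bm\beta$ enters forces this one additional term.
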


\begin{proof}
	Consider the dual problem \eqref{Dual_PDE2} and let $\Theta = \varepsilon_h^z$. We take  $(\bm r_2,w_2,\mu_2) = ( {\bm\Pi}\bm{\Phi},{\Pi}\Psi,P_M\Psi)$ in \eqref{error_equation_L2k2} in Lemma \ref{lemma:step4_first_lemma},   and since $\Psi=0$ on $\varepsilon_h^{\partial}$, we have
	\begin{align*}
	\hspace{3em}&\hspace{-3em}  \mathscr B_2 (\varepsilon^{\bm p}_h,\varepsilon^z_h,\varepsilon^{\widehat z}_h;{\bm\Pi}\bm{\Phi},{\Pi}\Psi,P_M\Psi)\\
	&=
	(\varepsilon^{\bm p}_h,{\bm\Pi}\bm{\Phi})_{\mathcal T_h}-( \varepsilon^z_h,\nabla\cdot{\bm\Pi}\bm{\Phi})_{\mathcal T_h}+\langle  \varepsilon^{\widehat z}_h,{\bm\Pi}\bm{\Phi}\cdot\bm n\rangle_{\partial\mathcal T_h\backslash \varepsilon_h^\partial}\\
	&\quad-(\varepsilon^{\bm p}_h-\bm \beta\varepsilon^z_h,  \nabla {\Pi}\Psi)_{\mathcal T_h}+\langle \varepsilon^{\bm p}_h\cdot\bm n +h^{-1}P_M \varepsilon^z_h +\tau_2 \varepsilon^z_h ,{\Pi}\Psi\rangle_{\partial\mathcal T_h}\\
	&\quad-\langle (\bm\beta\cdot\bm n +h^{-1}+\tau_1) \varepsilon^{\widehat y}_h,{\Pi}\Psi\rangle_{\partial\mathcal T_h\backslash \varepsilon_h^\partial}\\
	&\quad-\langle  \varepsilon^{\bm p}_h\cdot\bm n+\bm \beta\cdot\bm n\varepsilon^{\widehat z}_h +h^{-1}(P_M\varepsilon^z_h-\varepsilon^{\widehat z}_h) + \tau_2(\varepsilon^z_h - \varepsilon^{\widehat z}_h),P_M\Psi\rangle_{\partial\mathcal T_h\backslash\varepsilon^{\partial}_h}\\
	&= (\varepsilon^{\bm p}_h,\bm{\Phi})_{\mathcal T_h}-( \varepsilon^z_h,\nabla\cdot \bm{\Phi})_{\mathcal T_h} + ( \varepsilon^z_h,\nabla\cdot\delta^{\bm{\Phi}})_{\mathcal T_h}-\langle  \varepsilon^{\widehat z}_h,\delta^{\bm{\Phi}}\cdot\bm n\rangle_{\partial\mathcal T_h}\\
	&\quad-(\varepsilon^{\bm p}_h-\bm \beta\varepsilon^z_h,  \nabla \Psi)_{\mathcal T_h}+(\varepsilon^{\bm p}_h-\bm \beta\varepsilon^z_h,  \nabla \delta^\Psi)_{\mathcal T_h}\\
	&\quad  - \langle  \varepsilon^{\bm p}_h\cdot\bm n-\bm \beta\cdot\bm n\varepsilon^{\widehat z}_h +h^{-1}(P_M\varepsilon^z_h-\varepsilon^{\widehat z}_h) + \tau_2(\varepsilon^z_h - \varepsilon^{\widehat z}_h),\delta^\Psi - \delta^{\widehat\Psi}\rangle_{\partial\mathcal T_h}.
	\end{align*}
	Here, we have $\langle\varepsilon^{\widehat z}_h,\bm \Phi\cdot\bm n\rangle_{\partial\mathcal T_h}=0$, which holds since $\varepsilon^{\widehat z}_h$ is single-valued function on interior edges and $\varepsilon^{\widehat z}_h=0$ on $\varepsilon^{\partial}_h$. 
	
	The same argument in \eqref{inde_eq2} gives
	\begin{align*}
	(\varepsilon^z_h,\nabla\cdot\delta^{\bm \Phi})_{\mathcal{T}_h}
	&=\langle \varepsilon^z_h,\delta^{\bm \Phi} \cdot\bm n\rangle_{\partial\mathcal T_h}-(\nabla\varepsilon^z_h,\delta^{\bm \Phi})_{\mathcal{T}_h} = \langle \varepsilon^z_h,\delta^{\bm \Phi}\cdot\bm n\rangle_{\partial\mathcal T_h},\\
	(\varepsilon^{\bm p}_h, \nabla \delta^{ \Psi})_{\mathcal{T}_h}&=\langle \varepsilon^{\bm p}_h \cdot\bm n, \delta^{ \Psi}\rangle_{\partial\mathcal T_h}-(\nabla\cdot \varepsilon^{\bm p}_h , \delta^{ \Psi})_{\mathcal T_h} = \langle \varepsilon^{\bm p}_h \cdot\bm n, \delta^{ \Psi}\rangle_{\partial\mathcal T_h},\\
	(\bm\beta \varepsilon_h^z, \nabla \delta^{ \Psi})_{\mathcal{T}_h}&=\langle \bm{\beta} \cdot\bm n \varepsilon_h^z, \delta^{ \Psi}\rangle_{\partial\mathcal T_h}- (\nabla\cdot \bm{\beta} \varepsilon_h^z, \delta^{ \Psi})_{\mathcal T_h}  - (\bm{\beta} \nabla\varepsilon_h^z, \delta^{ \Psi})_{\mathcal T_h}. 
	\end{align*}
	Then, 
	\begin{align*}
	\hspace{3em}&\hspace{-3em}  \mathscr B_2 (\varepsilon^{\bm p}_h,\varepsilon^z_h,\varepsilon^{\widehat z}_h;{\bm\Pi}\bm{\Phi},{\Pi}\Psi,P_M\Psi)\\
	&=\|  \varepsilon_h^z\|_{\mathcal T_h}^2 + \langle \varepsilon^z_h - \varepsilon^{\widehat z}_h,\delta^{\bm \Phi}\cdot\bm n -\bm{\beta}\cdot\bm n  \delta^{\Psi} \rangle_{\partial\mathcal T_h} + (\nabla \varepsilon_h^z,\bm{\beta}\delta^{\Psi})\\
	&\quad+ (\nabla\cdot \bm{\beta} \varepsilon_h^z, \delta^{ \Psi})_{\mathcal T_h} - \langle   h^{-1}(P_M\varepsilon^z_h-\varepsilon^{\widehat z}_h) + \tau_1(\varepsilon^z_h-\varepsilon^{\widehat z}_h),\delta^\Psi - \delta^{\widehat\Psi}\rangle_{\partial\mathcal T_h},
	\end{align*}
	where we have used  $\varepsilon^{\widehat z}_h$ is single-valued function on interior edges and  $\varepsilon^{\widehat z}_h=0$ on $\varepsilon^{\partial}_h$.
	On the other hand, 
	\begin{align*}
	\hspace{3em}&\hspace{-3em}  \mathscr B_2 (\varepsilon^{\bm p}_h,\varepsilon^z_h,\varepsilon^{\widehat z}_h;{\bm\Pi}\bm{\Phi},{\Pi}\Psi,P_M\Psi)\\
	&=( \bm \beta \delta^z, \nabla {\Pi}\Psi)_{{\mathcal{T}_h}} + \langle \widehat{\bm \delta}_1,\delta^{\Psi} - \delta^{\widehat \Psi} \rangle_{\partial{{\mathcal{T}_h}}} +(y-y_h(u),\Pi\Psi)_{\mathcal T_h}.
	\end{align*}
	Comparing the above two equalities gives
	\begin{align*}
	\|  \varepsilon_h^z\|_{\mathcal T_h}^2  &= - \langle \varepsilon^z_h - \varepsilon^{\widehat z}_h,\delta^{\bm \Phi}\cdot\bm n +\bm{\beta}\cdot\bm n  \delta^{\Psi} \rangle_{\partial\mathcal T_h} - (\nabla \varepsilon_h^z,\bm{\beta}\delta^{\Psi})_{{\mathcal{T}_h}}+(\bm{\beta}\delta^z,\nabla{\Pi}\Psi)_{\mathcal T_h}\\
	&\quad+\langle   h^{-1}(P_M\varepsilon^z_h-\varepsilon^{\widehat z}_h) + \tau_2(\varepsilon^z_h-\varepsilon^{\widehat z}_h) + \widehat{\bm \delta}_2,\delta^\Psi - \delta^{\widehat\Psi}\rangle_{\partial\mathcal T_h}\\
	&\quad-( \nabla \cdot\bm \beta \varepsilon_h^z, \delta^\Psi)_{{\mathcal{T}_h}}+(y-y_h(u),\Pi\Psi)_{\mathcal T_h}\\
	&=:S_1+S_2+S_3+S_4+S_5+S_6.
	\end{align*}
	We can estimate $S_1$ to $S_4$ as in the proof of Lemma \ref{e_sec} to get
	\begin{align*}
	\sum_{i=1}^4 S_i \le Ch^{k+1+\min\{k,1\}}(\|\bm q\|_{k+1,\Omega} + \|y\|_{k+2,\Omega} + \|\bm p\|_{k+1,\Omega} + \|z\|_{k+2,\Omega}).
	\end{align*}
	By the estimate for $\varepsilon_h^z$ in \eqref{error_z} in Lemma \ref{lemma:step5_main_lemma}, we have
	\begin{align*}
	S_5 &=-( \nabla \cdot\bm \beta \varepsilon_h^z, \delta^\Psi)_{{\mathcal{T}_h}}\le  C\|\varepsilon_h^z\|_{\mathcal T_h} \|\delta^\Psi\|_{\mathcal T_h}\\
	&\le Ch^{k+2}(\|\bm q\|_{k+1,\Omega} + \|y\|_{k+2,\Omega} + \|\bm p\|_{k+1,\Omega} + \|z\|_{k+2,\Omega}) \|\varepsilon_h^z\|_{\mathcal T_h}.
	\end{align*}
    The estimate of the last term $S_6$ can be easily obtained from \eqref{lemma:step3_conv_rates}:
	\begin{align*}
	S_6 &=(y-y_h(u),\Pi\Psi)_{\mathcal T_h} \le \|y-y_h(u)\|_{\mathcal T_h}  (\|\delta^\Psi\|_{\mathcal T_h}+\|\Psi\|_{\mathcal T_h})\\
	&\le Ch^{k+1+\min\{k,1\}} \|\varepsilon_h^z\|_{\mathcal T_h}.
	\end{align*}
	Finally, we complete the proof  by combining the estimates for $S_1$ to $S_6$.
\end{proof}

The triangle inequality gives convergence rates for $\|\bm p -\bm p_h(u)\|_{\mathcal T_h}$ and $\|z -z_h(u)\|_{\mathcal T_h}$:
\begin{lemma}\label{lemma:step6_conv_rates}
	\begin{subequations}
		\begin{align}
		\|\bm p -\bm p_h(u)\|_{\mathcal T_h}&\le \|\delta^{\bm p}\|_{\mathcal T_h} + \|\varepsilon_h^{\bm p}\|_{\mathcal T_h} \nonumber\\
		&\lesssim h^{k+1}(\|\bm q\|_{k+1,\Omega} + \|y\|_{k+2,\Omega} + \|\bm p\|_{k+1,\Omega} + \|z\|_{k+2,\Omega})\\
		\|z -z_h(u)\|_{\mathcal T_h}&\le \|\delta^{z}\|_{\mathcal T_h} + \|\varepsilon_h^{z}\|_{\mathcal T_h}\nonumber\\
		& \lesssim  h^{k+1+\min\{k,1\}}(\|\bm q\|_{k+1,\Omega} + \|y\|_{k+2,\Omega} + \|\bm p\|_{k+1,\Omega} + \|z\|_{k+2,\Omega}).
		\end{align}
	\end{subequations}
\end{lemma}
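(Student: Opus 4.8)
The plan is to read both bounds off directly from the triangle inequality, since the genuine analytic work has already been carried out in Lemma~\ref{lemma:step5_main_lemma} and Lemma~\ref{e_secz}. Using the splittings built into the definitions \eqref{notation_3}, namely $\bm p - \bm p_h(u) = \delta^{\bm p} + \varepsilon_h^{\bm p}$ and $z - z_h(u) = \delta^z + \varepsilon_h^z$, the triangle inequality gives at once $\|\bm p - \bm p_h(u)\|_{\mathcal T_h} \le \|\delta^{\bm p}\|_{\mathcal T_h} + \|\varepsilon_h^{\bm p}\|_{\mathcal T_h}$ and $\|z - z_h(u)\|_{\mathcal T_h} \le \|\delta^z\|_{\mathcal T_h} + \|\varepsilon_h^z\|_{\mathcal T_h}$, which are exactly the first inequalities recorded in the statement. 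It then remains to bound each of the four summands.

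First I would dispatch the projection errors. By the classical estimates \eqref{classical_ine}, which the paper observes hold verbatim with $(\bm q, y)$ replaced by $(\bm p, z)$, one has $\|\delta^{\bm p}\|_{\mathcal T_h} \lesssim h^{k+1}\|\bm p\|_{k+1,\Omega}$ and $\|\delta^z\|_{\mathcal T_h} \lesssim h^{k+2}\|z\|_{k+2,\Omega}$. Next I would insert the two estimates already established: inequality \eqref{error_p} of Lemma~\ref{lemma:step5_main_lemma} controls $\|\varepsilon_h^{\bm p}\|_{\mathcal T_h}$ at order $h^{k+1}$, and Lemma~\ref{e_secz} controls $\|\varepsilon_h^z\|_{\mathcal T_h}$ at order $h^{k+1+\min\{k,1\}}$, both against the same right-hand side $\|\bm q\|_{k+1,\Omega} + \|y\|_{k+2,\Omega} + \|\bm p\|_{k+1,\Omega} + \|z\|_{k+2,\Omega}$.

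The only point that needs a moment of attention is the bookkeeping of the powers of $h$. For the flux, both contributions are $O(h^{k+1})$ and share the stated right-hand side, so their sum is $O(h^{k+1})$ and the first bound follows. For the state, the projection error is $O(h^{k+2})$ while the discrete error is only $O(h^{k+1+\min\{k,1\}})$; since $\min\{k,1\} \le 1$ we have $h^{k+2} \le h^{k+1+\min\{k,1\}}$ for $h \le 1$, so $\|\delta^z\|_{\mathcal T_h}$ is absorbed into $\|\varepsilon_h^z\|_{\mathcal T_h}$ and the $z$-bound is $O(h^{k+1+\min\{k,1\}})$. In particular, for $k = 0$ the discrete error $O(h^{k+1})$ dominates, consistent with $\min\{0,1\} = 0$.

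I would expect no genuine obstacle in this lemma: it is purely a matter of assembling previously proven bounds in the natural $L^2$ norm. The substantive difficulties—the duality argument of Lemma~\ref{e_secz}, the energy-identity estimate of Lemma~\ref{lemma:step5_main_lemma}, and the discrete Poincar\'e inequality of Lemma~\ref{lemma:discr_Poincare_ineq} on which they rest—have already been resolved, so the present statement simply records their consequence for the total error $\bm p - \bm p_h(u)$ and $z - z_h(u)$, exactly mirroring the companion Lemma~\ref{lemma:step3_conv_rates} for the primal variables.
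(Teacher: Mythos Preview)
Your proposal is correct and matches the paper's own treatment: the paper does not even write out a proof for this lemma, merely noting that ``the triangle inequality gives convergence rates,'' which is precisely the splitting $\bm p - \bm p_h(u) = \delta^{\bm p} + \varepsilon_h^{\bm p}$, $z - z_h(u) = \delta^z + \varepsilon_h^z$ together with the projection bounds \eqref{classical_ine}, Lemma~\ref{lemma:step5_main_lemma}, and Lemma~\ref{e_secz} that you invoke. Your observation that $h^{k+2} \le h^{k+1+\min\{k,1\}}$ absorbs $\|\delta^z\|_{\mathcal T_h}$ into the dominant term is the only bookkeeping needed, and it is correct.
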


\subsubsection{Step 7: Estimate for $\|u-u_h\|_{\mathcal T_h}$, $\norm {y-y_h}_{\mathcal T_h}$ and $\norm {z-z_h}_{\mathcal T_h}$.}


To obtain the main result, we bound the error between the solutions of the auxiliary problem and the HDG problem \eqref{HDG_full_discrete}.  The proofs of the results in Steps 7 and 8 are similar to the proofs of the corresponding results in our earlier work \cite{HuShenSinglerZhangZheng_HDG_Dirichlet_control5}; we include them for completeness.

For the final steps, let
\begin{equation*}
\begin{split}
\zeta_{\bm q} &=\bm q_h(u)-\bm q_h,\quad\zeta_{y} = y_h(u)-y_h,\quad\zeta_{\widehat y} = \widehat y_h(u)-\widehat y_h,\\
\zeta_{\bm p} &=\bm p_h(u)-\bm p_h,\quad\zeta_{z} = z_h(u)-z_h,\quad\zeta_{\widehat z} = \widehat z_h(u)-\widehat z_h.
\end{split}
\end{equation*}
Subtracting the auxiliary problem and the HDG problem gives the error equations
\begin{subequations}\label{eq_yh}
	\begin{align}
	 \mathscr B_1(\zeta_{\bm q},\zeta_y,\zeta_{\widehat y};\bm r_1, w_1,\mu_1)&=(u-u_h,w_1)_{\mathcal T_h}\label{eq_yh_yhu}\\
	\mathscr B_2(\zeta_{\bm p},\zeta_z,\zeta_{\widehat z};\bm r_2, w_2,\mu_2)&=-(\zeta_y, w_2)_{\mathcal T_h}\label{eq_zh_zhu}.
	\end{align}
\end{subequations}
\begin{lemma}
	We have
	\begin{align}\label{eq_uuh_yhuyh}
	\hspace{3em}&\hspace{-3em}  \gamma\|u-u_h\|^2_{\mathcal T_h}+\|y_h(u)-y_h\|^2_{\mathcal T_h}\nonumber\\
	&=( z_h-\gamma u_h,u-u_h)_{\mathcal T_h}-(z_h(u)-\gamma u,u-u_h)_{\mathcal T_h}.
	\end{align}
\end{lemma}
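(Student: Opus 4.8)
The plan is to mirror the energy argument behind Lemma \ref{identical_equa} and Proposition \ref{ex_uni}, now carried out with the error variables $\zeta$, and then reconcile the resulting energy identity with the control terms by elementary algebra. The key observation is that the cross identity of Lemma \ref{identical_equa} uses only the bilinearity of $\mathscr B_1,\mathscr B_2$ together with condition \textbf{(A1)}; it therefore holds verbatim when $(\bm q_h,y_h,\widehat y_h^o)$ and $(\bm p_h,z_h,\widehat z_h^o)$ are replaced by the error triples $(\zeta_{\bm q},\zeta_y,\zeta_{\widehat y})$ and $(\zeta_{\bm p},\zeta_z,\zeta_{\widehat z})$, giving
\[
\mathscr B_1(\zeta_{\bm q},\zeta_y,\zeta_{\widehat y};\zeta_{\bm p},-\zeta_z,-\zeta_{\widehat z})
+\mathscr B_2(\zeta_{\bm p},\zeta_z,\zeta_{\widehat z};-\zeta_{\bm q},\zeta_y,\zeta_{\widehat y})=0.
\]

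Next I would test the two error equations in \eqref{eq_yh} against crossed arguments, exactly as in the uniqueness proof: in \eqref{eq_yh_yhu} choose $(\bm r_1,w_1,\mu_1)=(\zeta_{\bm p},-\zeta_z,-\zeta_{\widehat z})$, and in \eqref{eq_zh_zhu} choose $(\bm r_2,w_2,\mu_2)=(-\zeta_{\bm q},\zeta_y,\zeta_{\widehat y})$. This produces $\mathscr B_1(\cdots)=-(u-u_h,\zeta_z)_{\mathcal T_h}$ and $\mathscr B_2(\cdots)=-\|\zeta_y\|^2_{\mathcal T_h}$. Adding the two equations and invoking the cross identity above to annihilate the left-hand side yields the energy relation
\[
\|y_h(u)-y_h\|^2_{\mathcal T_h}=(u-u_h,\,z_h-z_h(u))_{\mathcal T_h},
\]
where I have used $\zeta_z=z_h(u)-z_h$ to flip the sign.

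Finally I would add $\gamma\|u-u_h\|^2_{\mathcal T_h}$ to both sides and regroup the inner products. Since
\[
(z_h-\gamma u_h,u-u_h)_{\mathcal T_h}-(z_h(u)-\gamma u,u-u_h)_{\mathcal T_h}
=(z_h-z_h(u),u-u_h)_{\mathcal T_h}+\gamma(u-u_h,u-u_h)_{\mathcal T_h},
\]
the right-hand side of the claimed identity \eqref{eq_uuh_yhuyh} is exactly $\gamma\|u-u_h\|^2_{\mathcal T_h}+\|y_h(u)-y_h\|^2_{\mathcal T_h}$, which closes the argument. (Here I would also remark that the discrete optimality condition \eqref{HDG_full_discrete_e} forces $z_h=\gamma u_h$, so the first term on the right in fact vanishes; retaining it in the displayed form is convenient for the error estimates carried out in Step 8.)

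There is no analytic difficulty in this lemma: the only things to watch are the sign bookkeeping in the crossed test functions and the verification that the cross identity of Lemma \ref{identical_equa} transfers to the error variables, both of which are purely formal consequences of bilinearity and \textbf{(A1)}. The one conceptual point---rather than an obstacle---is recognizing that the deliberately mismatched sign structure of $\mathscr B_1$ and $\mathscr B_2$, reflecting the adjoint coupling between the state and dual-state equations, is precisely what makes the $\mathscr B$-terms cancel and collapses the two error equations into a single positive energy identity for $\|y_h(u)-y_h\|_{\mathcal T_h}$.
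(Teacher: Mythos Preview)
Your proposal is correct and follows essentially the same route as the paper: apply the cross identity of Lemma \ref{identical_equa} to the error triples $(\zeta_{\bm q},\zeta_y,\zeta_{\widehat y})$ and $(\zeta_{\bm p},\zeta_z,\zeta_{\widehat z})$, test the error equations \eqref{eq_yh} with the crossed arguments to obtain $-(u-u_h,\zeta_z)_{\mathcal T_h}=\|\zeta_y\|^2_{\mathcal T_h}$, and then regroup with $\gamma\|u-u_h\|^2_{\mathcal T_h}$. The only cosmetic difference is that the paper performs the algebraic regrouping first and the cross-testing second; your parenthetical remark about $z_h=\gamma u_h$ is also correct, though in the paper that observation is used in the theorem immediately following this lemma (still Step 7) rather than in Step 8.
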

\begin{proof}
	First, we have
	\begin{align*}
	\hspace{3em}&\hspace{-3em}  ( z_h-\gamma u_h,u-u_h)_{\mathcal T_h}-( z_h(u)-\gamma u,u-u_h)_{\mathcal T_h}\\
	&=-(\zeta_{ z},u-u_h)_{\mathcal T_h}+\gamma\|u-u_h\|^2_{\mathcal T_h}.
	\end{align*}
	Next, Lemma \ref{identical_equa} gives
	\begin{align*}
	\mathscr B_1 &(\zeta_{\bm q},\zeta_y,\zeta_{\widehat{y}};\zeta_{\bm p},-\zeta_{z},-\zeta_{\widehat z}) + \mathscr B_2(\zeta_{\bm p},\zeta_z,\zeta_{\widehat z};-\zeta_{\bm q},\zeta_y,\zeta_{\widehat{y}})  = 0.
	\end{align*}
	On the other hand, working from the definitions yields
	\begin{align*}
	\hspace{3em}&\hspace{-3em}  \mathscr B_1 (\zeta_{\bm q},\zeta_y,\zeta_{\widehat{y}};\zeta_{\bm p},-\zeta_{z},-\zeta_{\widehat z}) + \mathscr B_2(\zeta_{\bm p},\zeta_z,\zeta_{\widehat z};-\zeta_{\bm q},\zeta_y,\zeta_{\widehat{y}})\\
	&= - ( u- u_h,\zeta_{ z})_{\mathcal{T}_h}-\|\zeta_{ y}\|^2_{\mathcal{T}_h}.
	\end{align*}
	Comparing the above two equalities gives
	\begin{align*}
	-(u-u_h,\zeta_{ z})_{\mathcal{T}_h}=\|\zeta_{ y}\|^2_{\mathcal{T}_h},
	\end{align*}
	which completes the proof.
\end{proof}

\begin{theorem}
	We have
	\begin{subequations}
	\begin{align}\label{err_yhu_yh}
	\|u-u_h\|_{\mathcal T_h}&\lesssim h^{k+1+\min\{k,1\}}(|\bm q|_{k+1}+|y|_{k+2}+|\bm p|_{k+1}+|z|_{k+2}),\\
    \|y-y_h\|_{\mathcal T_h}&\lesssim h^{k+1+\min\{k,1\}}(|\bm q|_{k+1}+|y|_{k+2}+|\bm p|_{k+1}+|z|_{k+2}),\\
    \|z-z_h\|_{\mathcal T_h}&\lesssim h^{k+1+\min\{k,1\}}(|\bm q|_{k+1}+|y|_{k+2}+|\bm p|_{k+1}+|z|_{k+2}).
\end{align}
	\end{subequations}
\end{theorem}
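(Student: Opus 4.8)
The plan is to leverage the two optimality conditions---one discrete, one continuous---to collapse the right-hand side of the identity \eqref{eq_uuh_yhuyh}, thereby transferring the superconvergence of the auxiliary dual state $z_h(u)$ from Lemma \ref{lemma:step6_conv_rates} directly onto the control error. First I would use the discrete optimality condition \eqref{HDG_full_discrete_e}: since $z_h-\gamma u_h\in W_h$ and this function is $L^2(\Omega)$-orthogonal to every $w_3\in W_h$, taking $w_3=z_h-\gamma u_h$ forces $z_h-\gamma u_h=0$, so the term $(z_h-\gamma u_h,u-u_h)_{\mathcal T_h}$ in \eqref{eq_uuh_yhuyh} vanishes. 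Next I would invoke the continuous optimality condition \eqref{eq_adeq_e}, i.e.\ $\gamma u=z$, to write $z_h(u)-\gamma u=z_h(u)-z$, so the remaining term becomes $(z-z_h(u),u-u_h)_{\mathcal T_h}$. Identity \eqref{eq_uuh_yhuyh} then reduces to
\begin{align*}
\gamma\|u-u_h\|_{\mathcal T_h}^2+\|y_h(u)-y_h\|_{\mathcal T_h}^2=(z-z_h(u),u-u_h)_{\mathcal T_h}.
\end{align*}

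Applying Cauchy--Schwarz and Young's inequality on the right and absorbing $\tfrac{\gamma}{2}\|u-u_h\|_{\mathcal T_h}^2$ into the left yields
\begin{align*}
\tfrac{\gamma}{2}\|u-u_h\|_{\mathcal T_h}^2+\|y_h(u)-y_h\|_{\mathcal T_h}^2\lesssim\|z-z_h(u)\|_{\mathcal T_h}^2.
\end{align*}
Hence both $\|u-u_h\|_{\mathcal T_h}$ and $\|\zeta_y\|_{\mathcal T_h}=\|y_h(u)-y_h\|_{\mathcal T_h}$ are controlled by $\|z-z_h(u)\|_{\mathcal T_h}$, which Lemma \ref{lemma:step6_conv_rates} bounds at the superconvergent rate $h^{k+1+\min\{k,1\}}$; this gives the estimate for $\|u-u_h\|_{\mathcal T_h}$. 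The bound on $\|y-y_h\|_{\mathcal T_h}$ then follows from the triangle inequality $\|y-y_h\|_{\mathcal T_h}\le\|y-y_h(u)\|_{\mathcal T_h}+\|\zeta_y\|_{\mathcal T_h}$, estimating the first term by Lemma \ref{lemma:step3_conv_rates} and the second by the bound just obtained.

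For $\|z-z_h\|_{\mathcal T_h}$ I would again split $\|z-z_h\|_{\mathcal T_h}\le\|z-z_h(u)\|_{\mathcal T_h}+\|\zeta_z\|_{\mathcal T_h}$, handle the first term with Lemma \ref{lemma:step6_conv_rates}, and bound $\|\zeta_z\|_{\mathcal T_h}$ by a stability argument. Testing the error equation \eqref{eq_zh_zhu} with $(\bm r_2,w_2,\mu_2)=(\zeta_{\bm p},\zeta_z,\zeta_{\widehat z})$ and using the energy identity for $\mathscr B_2$ in Lemma \ref{property_B} together with the positivity \eqref{eqn:tau1_condition} gives
\begin{align*}
\|\zeta_{\bm p}\|_{\mathcal T_h}^2+h^{-1}\|P_M\zeta_z-\zeta_{\widehat z}\|_{\partial\mathcal T_h}^2\le-(\zeta_y,\zeta_z)_{\mathcal T_h}\le\|\zeta_y\|_{\mathcal T_h}\,\|\zeta_z\|_{\mathcal T_h}.
\end{align*}
Combining the discrete Poincar\'e inequality \eqref{poin_in} with the adapted key inequality \eqref{nabla_z} yields $\|\zeta_z\|_{\mathcal T_h}\lesssim\|\zeta_{\bm p}\|_{\mathcal T_h}+h^{-1/2}\|P_M\zeta_z-\zeta_{\widehat z}\|_{\partial\mathcal T_h}$, so the displayed bound becomes $\|\zeta_z\|_{\mathcal T_h}^2\lesssim\|\zeta_y\|_{\mathcal T_h}\,\|\zeta_z\|_{\mathcal T_h}$, whence $\|\zeta_z\|_{\mathcal T_h}\lesssim\|\zeta_y\|_{\mathcal T_h}$, which is already superconvergent by the control estimate above. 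This closes the third bound.

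The genuinely delicate point is not any single calculation but the structural mechanism driving the step: superconvergence of the control is inherited \emph{entirely} from that of the auxiliary dual state $z_h(u)$, and this inheritance works only because the two optimality conditions annihilate the would-be leading contributions in \eqref{eq_uuh_yhuyh}. The main technical obstacle is ensuring the $\|\zeta_z\|_{\mathcal T_h}$ estimate closes at the full superconvergent rate; this hinges on the factor $\|\zeta_y\|_{\mathcal T_h}$ on the right already being superconvergent (rather than merely optimal) and on the Poincar\'e/stability machinery of Lemma \ref{property_B}, Lemma \ref{lemma:discr_Poincare_ineq}, and \eqref{nabla_z} being applicable verbatim to the $\zeta$-variables.
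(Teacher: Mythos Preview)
Your argument for $\|u-u_h\|_{\mathcal T_h}$ and $\|y-y_h\|_{\mathcal T_h}$ is exactly the paper's: collapse \eqref{eq_uuh_yhuyh} using the two optimality conditions, apply Cauchy--Schwarz/Young, absorb, and invoke Lemma~\ref{lemma:step6_conv_rates} and Lemma~\ref{lemma:step3_conv_rates}.

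The only divergence is the bound on $\|z-z_h\|_{\mathcal T_h}$. You split $z-z_h=(z-z_h(u))+\zeta_z$ and then run a full stability argument (energy identity for $\mathscr B_2$, discrete Poincar\'e, key inequality) to get $\|\zeta_z\|_{\mathcal T_h}\lesssim\|\zeta_y\|_{\mathcal T_h}$. This is correct---the Poincar\'e and key inequalities apply to the $\zeta$-variables because $\zeta_{\widehat z}=0$ on $\varepsilon_h^\partial$ and the first equation in \eqref{eq_zh_zhu} has the same structure as the one driving Lemma~\ref{nabla_ine}---but it is considerably more work than needed. The paper simply observes that $z=\gamma u$ and $z_h=\gamma u_h$ (the continuous and discrete optimality conditions again), so $\|z-z_h\|_{\mathcal T_h}=\gamma\|u-u_h\|_{\mathcal T_h}$ and the third bound follows immediately from the first. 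Your route buys nothing extra here, but it would be the right template in situations where the control--dual-state relation is not a pointwise identity (e.g.\ control constraints), so it is not without value.
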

\begin{proof}
		The continuous and discretized optimality conditions \eqref{eq_adeq_e} and \eqref{HDG_full_discrete_e} give $ \gamma u = z $ and $ \gamma u_h = z_h $.  Use these equations and the previous lemma to obtain
	\begin{align*}
	\hspace{3em}&\hspace{-3em}   \gamma\|u-u_h\|^2_{\mathcal T_h}+\|\zeta_{ y}\|^2_{\mathcal T_h}\\
	&=( z_h-\gamma u_h,u-u_h)_{\mathcal T_h}-( z_h(u)-\gamma u,u-u_h)_{\mathcal T_h}\\
	&=-( z_h(u)- z,u-u_h)_{\mathcal T_h}\\
	&\le \| z_h(u)- z\|_{\mathcal T_h} \|u-u_h\|_{\mathcal T_h}\\
	&\le\frac{1}{2\gamma}\| z_h(u)- z\|^2_{\mathcal T_h}+\frac{\gamma}{2}\|u-u_h\|^2_{\mathcal T_h}.
	\end{align*}
	By Lemma \ref{lemma:step6_conv_rates}, we have
	\begin{align*}
	\|u-u_h\|_{\mathcal T_h}+\|\zeta_{ y}\|_{\mathcal T_h}&\lesssim h^{k+1+\min\{k,1\}}(|\bm q|_{k+1}+|y|_{k+2}+|\bm p|_{k+1}+|z|_{k+2}).
	\end{align*}
	By the triangle inequality and Lemma \ref{lemma:step3_conv_rates} we obtain
	\begin{align*}
	\|y-y_h\|_{\mathcal T_h}&\lesssim h^{k+1+\min\{k,1\}}(|\bm q|_{k+1}+|y|_{k+2}+|\bm p|_{k+1}+|z|_{k+2}).
	\end{align*}
	Finally, $z = \gamma u $ and $z_h = \gamma u_h$ give
	\begin{align*}
	\|z-z_h\|_{\mathcal T_h}&\lesssim h^{k+1+\min\{k,1\}}(|\bm q|_{k+1}+|y|_{k+2}+|\bm p|_{k+1}+|z|_{k+2}).
	\end{align*}
\end{proof}

\subsubsection{Step 8: Estimate for $\|q-q_h\|_{\mathcal T_h}$ and  $\|p-p_h\|_{\mathcal T_h}$.}

\begin{lemma}
	We have
		\begin{subequations}
	\begin{align}\label{err_Lhu_Lh}
\|\zeta_{\bm q}\|_{\mathcal T_h}&\lesssim h^{k+1+\min\{k,1\}}(|\bm q|_{k+1}+|y|_{k+2}+|\bm p|_{k+1}+|z|_{k+2}),\\
\|\zeta_{\bm p}\|_{\mathcal T_h}&\lesssim h^{k+1+\min\{k,1\}}(|\bm q|_{k+1}+|y|_{k+2}+|\bm p|_{k+1}+|z|_{k+2}).
\end{align}
	\end{subequations}
\end{lemma}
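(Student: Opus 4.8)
The plan is to run the same energy argument as in Steps 2 and 5, applied now to the error equations \eqref{eq_yh}, whose forcing terms are $(u-u_h,\cdot)_{\mathcal T_h}$ and $-(\zeta_y,\cdot)_{\mathcal T_h}$ rather than projection errors. First I would take $(\bm r_1,w_1,\mu_1)=(\zeta_{\bm q},\zeta_y,\zeta_{\widehat y})$ in \eqref{eq_yh_yhu}. By the energy identity for $\mathscr B_1$ in Lemma \ref{property_B},
\begin{align*}
\mathscr B_1(\zeta_{\bm q},\zeta_y,\zeta_{\widehat y};\zeta_{\bm q},\zeta_y,\zeta_{\widehat y}) &= \|\zeta_{\bm q}\|_{\mathcal T_h}^2 + \langle(\tau_1-\tfrac12\bm\beta\cdot\bm n)(\zeta_y-\zeta_{\widehat y}),\zeta_y-\zeta_{\widehat y}\rangle_{\partial\mathcal T_h\backslash\varepsilon_h^\partial}\\
&\quad -\tfrac12(\nabla\cdot\bm\beta\,\zeta_y,\zeta_y)_{\mathcal T_h} + (\text{further nonnegative boundary terms}),
\end{align*}
where \textbf{(A2)} and $\nabla\cdot\bm\beta\le0$ force every term on the right to be nonnegative. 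Since the left-hand side equals $(u-u_h,\zeta_y)_{\mathcal T_h}$ by \eqref{eq_yh_yhu}, dropping the nonnegative terms gives $\|\zeta_{\bm q}\|_{\mathcal T_h}^2\le\|u-u_h\|_{\mathcal T_h}\|\zeta_y\|_{\mathcal T_h}$. Both factors are already controlled at the superconvergent rate: the bound on $\|u-u_h\|_{\mathcal T_h}$ is \eqref{err_yhu_yh}, and the bound $\|\zeta_y\|_{\mathcal T_h}\lesssim h^{k+1+\min\{k,1\}}(\cdots)$ was established in the proof of the Step 7 theorem. Taking a square root then yields the first estimate.

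For the second estimate I would repeat the argument with $\mathscr B_2$, taking $(\bm r_2,w_2,\mu_2)=(\zeta_{\bm p},\zeta_z,\zeta_{\widehat z})$ in \eqref{eq_zh_zhu}. The energy identity for $\mathscr B_2$ together with \eqref{eqn:tau1_condition} again renders all boundary terms nonnegative, so that $\|\zeta_{\bm p}\|_{\mathcal T_h}^2\le|(\zeta_y,\zeta_z)_{\mathcal T_h}|\le\|\zeta_y\|_{\mathcal T_h}\|\zeta_z\|_{\mathcal T_h}$. The factor $\|\zeta_y\|_{\mathcal T_h}$ is already at the superconvergent rate, so the only new ingredient needed is a bound for $\|\zeta_z\|_{\mathcal T_h}$. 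This I would obtain by the triangle inequality $\|\zeta_z\|_{\mathcal T_h}\le\|z-z_h(u)\|_{\mathcal T_h}+\|z-z_h\|_{\mathcal T_h}$, combining the bound on $\|z-z_h(u)\|_{\mathcal T_h}$ from Lemma \ref{lemma:step6_conv_rates} with the bound on $\|z-z_h\|_{\mathcal T_h}$ established in Step 7; both are $O(h^{k+1+\min\{k,1\}}(\cdots))$. A square root then gives the estimate for $\|\zeta_{\bm p}\|_{\mathcal T_h}$.

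There is no genuine analytical obstacle here: in each case the forcing term is a product of two quantities already known to converge at the rate $h^{k+1+\min\{k,1\}}$, so the same rate is recovered after taking the square root. The one point requiring care is to avoid circularity when bounding $\|\zeta_z\|_{\mathcal T_h}$. One should resist extracting it from the very energy identity that already carries $\|\zeta_{\bm p}\|_{\mathcal T_h}$ (which would couple the two unknowns through the discrete Poincar\'e inequality of Lemma \ref{lemma:discr_Poincare_ineq} and close no estimate), and instead feed in the previously proven bounds on $\|z-z_h(u)\|_{\mathcal T_h}$ and $\|z-z_h\|_{\mathcal T_h}$ via the triangle inequality, as above.
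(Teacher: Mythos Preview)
Your proposal is correct and follows essentially the same route as the paper: apply the energy identity in Lemma~\ref{property_B} to the error equations \eqref{eq_yh} to get $\|\zeta_{\bm q}\|_{\mathcal T_h}^2\le\|u-u_h\|_{\mathcal T_h}\|\zeta_y\|_{\mathcal T_h}$ and $\|\zeta_{\bm p}\|_{\mathcal T_h}^2\le\|\zeta_y\|_{\mathcal T_h}\|\zeta_z\|_{\mathcal T_h}$, then feed in the Step~7 bounds. Your explicit justification of the bound on $\|\zeta_z\|_{\mathcal T_h}$ via the triangle inequality (and your remark on avoiding circularity) is in fact more careful than the paper, which simply asserts the final rate without spelling out where the $\|\zeta_z\|_{\mathcal T_h}$ estimate comes from.
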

\begin{proof}
	By Lemma \ref{property_B} and the error equation \eqref{eq_yh_yhu}, we have
	\begin{align*}
	\|\zeta_{\bm q}\|^2_{\mathcal T_h} &\lesssim  \mathscr B_1(\zeta_{\bm q},\zeta_y,\zeta_{\widehat y};\zeta_{\bm q},\zeta_y,\zeta_{\widehat y})\\
	&=( u- u_h,\zeta_{ y})_{\mathcal T_h}\\
	&\le\| u- u_h\|_{\mathcal T_h}\|\zeta_{ y}\|_{\mathcal T_h}\\
	&\lesssim h^{2k+2 + 2\min\{k,1\}}(|\bm q|_{k+1}+|y|_{k+1}+|\bm p|_{k+1}+|z|_{k+1})^2.
	\end{align*}
	Similarly,  by Lemma \ref{property_B} and the error equation \eqref{eq_zh_zhu}, we have
	\begin{align*}
	\|\zeta_{\bm p}\|^2_{\mathcal T_h} &\lesssim  \mathscr B_2(\zeta_{\bm p},\zeta_z,\zeta_{\widehat z};\zeta_{\bm p},\zeta_z,\zeta_{\widehat z})\\
	&=-(\zeta_{ y},\zeta_{ z})_{\mathcal T_h}\\
	&\le\|\zeta_{y}\|_{\mathcal T_h}\|\zeta_{ z}\|_{\mathcal T_h}\\
	&\lesssim  h^{2k+2 + 2\min\{k,1\}}(|\bm q|_{k+1}+|y|_{k+1}+|\bm p|_{k+1}+|z|_{k+1})^2.
	\end{align*}
\end{proof}

The above lemma along with the triangle inequality, Lemma \ref{lemma:step3_conv_rates}, and Lemma \ref{lemma:step6_conv_rates} complete the proof of the main result:
\begin{theorem}
	We have
	\begin{subequations}
		\begin{align}
		\|\bm q-\bm q_h\|_{\mathcal T_h}&\lesssim h^{k+1}(|\bm q|_{k+1}+|y|_{k+2}+|\bm p|_{k+1}+|z|_{k+2}),\label{err_q}\\
		\|\bm p-\bm p_h\|_{\mathcal T_h}&\lesssim h^{k+1}(|\bm q|_{k+1}+|y|_{k+2}+|\bm p|_{k+1}+|z|_{k+2})\label{err_p}.
		\end{align}
	\end{subequations}
\end{theorem}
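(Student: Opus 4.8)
The plan is to obtain the two flux bounds purely by the triangle inequality, routing the error through the auxiliary HDG solution $(\bm q_h(u),\bm p_h(u))$ whose accuracy has already been quantified. For the state flux I would write
\begin{align*}
\|\bm q - \bm q_h\|_{\mathcal T_h} \le \|\bm q - \bm q_h(u)\|_{\mathcal T_h} + \|\bm q_h(u) - \bm q_h\|_{\mathcal T_h} = \|\bm q - \bm q_h(u)\|_{\mathcal T_h} + \|\zeta_{\bm q}\|_{\mathcal T_h},
\end{align*}
so that the first term is the distance between the exact flux and part~1 of the auxiliary problem, and the second is exactly the quantity controlled at the start of Step~8.

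For the first term I would invoke the bound \eqref{error_qu} in Lemma~\ref{lemma:step3_conv_rates}, giving the optimal rate $O(h^{k+1})$ for every $k\ge 0$; for the second I would use \eqref{err_Lhu_Lh}, which supplies $\|\zeta_{\bm q}\|_{\mathcal T_h}=O(h^{k+1+\min\{k,1\}})$. Since $\min\{k,1\}\ge 0$ the second contribution is never larger than the first, so the sum is $O(h^{k+1})$ and \eqref{err_q} follows. The dual flux bound \eqref{err_p} is obtained by the identical decomposition, replacing Lemma~\ref{lemma:step3_conv_rates} by Lemma~\ref{lemma:step6_conv_rates} and $\zeta_{\bm q}$ by $\zeta_{\bm p}$.

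I do not expect a genuine obstacle at this stage: all of the analytic work — the energy identities of Lemma~\ref{property_B}, the adjoint-type cancellation of Lemma~\ref{identical_equa}, and above all the two duality arguments of Steps~3 and~6 — has already been absorbed into the cited lemmas. The only point worth flagging is the bookkeeping that explains the rate: the bound on $\|\bm q - \bm q_h(u)\|_{\mathcal T_h}$ contains the projection term $\|\delta^{\bm q}\|_{\mathcal T_h}=O(h^{k+1})$ from \eqref{classical_ine} together with $\|\varepsilon_h^{\bm q}\|_{\mathcal T_h}=O(h^{k+1})$ from Lemma~\ref{lemma:step2_main_lemma}, and neither can be sharpened by duality. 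This is precisely why the fluxes are limited to $O(h^{k+1})$ while the states and control inherit the superconvergent rate $O(h^{k+1+\min\{k,1\}})$, and confirming that this slower flux term is the one that dominates the triangle inequality is the entire content of the final step.
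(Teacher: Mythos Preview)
Your proposal is correct and matches the paper's own argument essentially verbatim: the paper states that the theorem follows from the triangle inequality together with Lemma~\ref{lemma:step3_conv_rates}, Lemma~\ref{lemma:step6_conv_rates}, and the bounds \eqref{err_Lhu_Lh} on $\zeta_{\bm q}$ and $\zeta_{\bm p}$. Your additional remark explaining why the projection term $\|\delta^{\bm q}\|_{\mathcal T_h}$ limits the flux rate to $O(h^{k+1})$ is accurate and a helpful clarification that the paper leaves implicit.
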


\section{Numerical Experiments}
\label{sec:numerics}

To illustrate our convergence results, we consider two examples on a square domain $\Omega = [0,1]\times [0,1] \subset \mathbb{R}^2$ from our previous work \cite{HuShenSinglerZhangZheng_HDG_Dirichlet_control5}.  We first take $\gamma = 1$ and choose the exact state, dual state, and function $\bm \beta$.  Then we generate the data $f$, $ g $, and $y_d$ using the optimality system \eqref{eq_adeq}.

Table \ref{table_1}--Table \ref{table_4} show the computed errors and convergence rates for $ k = 0 $ and $ k = 1 $ for the two examples.  The computational results match the theory.

\begin{example}\label{example1}
	$\bm{\beta} = [1,1]$, state $ y(x_1,x_2) = \sin(\pi x_1) $, dual state $ z(x_1,x_2) = \sin(\pi x_1)\sin(\pi x_2)$
	\begin{table}
		\begin{center}
			\begin{tabular}{|c|c|c|c|c|c|}
				\hline
				$h/\sqrt 2$ &$1/16$& $1/32$&$1/64$ &$1/128$ & $1/256$ \\
				\hline
				$\norm{\bm{q}-\bm{q}_h}_{0,\Omega}$&1.7274e-01   &9.7054e-02   &5.2507e-02   &2.7509e-02   &1.4111e-02 \\
				\hline
				order&- & 0.83   &0.89   &0.93   &0.96\\
				\hline
				$\norm{\bm{p}-\bm{p}_h}_{0,\Omega}$& 2.5783e-01   &1.4468e-01   &7.7818e-02   &4.0586e-02   &2.0763e-02 \\
				\hline
				order&-&  0.833  &0.89   &0.94   &0.97 \\
				\hline
				$\norm{{y}-{y}_h}_{0,\Omega}$&2.4430e-02   &1.4046e-02   &7.8371e-03   &4.1908e-03   &2.1744e-03\\
				\hline
				order&-& 0.80   &0.84   &0.90   &0.95\\
				\hline
				$\norm{{z}-{z}_h}_{0,\Omega}$& 2.8132e-02   &1.8225e-02   &1.0659e-02   &5.8061e-03   &3.0363e-03 \\
				\hline
				order&-& 0.63   &0.77  &0.88   &0.94 \\
				\hline
			\end{tabular}
		\end{center}
		\caption{Example \ref{example1}: Errors for the state $y$, adjoint state $z$, and the fluxes $\bm q$ and $\bm p$ when $k=0$.}\label{table_1}
	\end{table}

	\begin{table}
		\begin{center}
			\begin{tabular}{|c|c|c|c|c|c|}
				\hline
				$h/\sqrt 2$ &$1/8$& $1/16$&$1/32$ &$1/64$ & $1/128$ \\
				\hline
				$\norm{\bm{q}-\bm{q}_h}_{0,\Omega}$&1.1365e-02   &3.0743e-03   &8.0051e-04   &2.0438e-04   &5.1648e-05 \\
				\hline
				order&-& 1.89   &1.94   &1.97   &1.98\\
				\hline
				$\norm{\bm{p}-\bm{p}_h}_{0,\Omega}$& 2.6923e-02   &6.9736e-03   &1.7764e-03   &4.4849e-04   &1.1269e-04\\
				\hline
				order&-&  1.95&1.97 &1.99 & 2.00 \\
				\hline
				$\norm{{y}-{y}_h}_{0,\Omega}$&1.9986e-03   &2.8351e-04   &3.7918e-05   &4.9101e-06   &6.2497e-07\\
				\hline
				order&-& 2.82  &2.90   &2.95   &2.97 \\
				\hline
				$\norm{{z}-{z}_h}_{0,\Omega}$& 3.8753e-03   &5.3846e-04   &7.1154e-05   &9.1544e-06   &1.1613e-06 \\
				\hline
				order&- & 2.85    &2.92   &2.96   &2.98 \\
				\hline
			\end{tabular}
		\end{center}
		\caption{Example \ref{example1}: Errors for the state $y$, adjoint state $z$, and the fluxes $\bm q$ and $\bm p$ when $k=1$.}\label{table_2}
	\end{table}
\end{example}

\begin{example}\label{example2}
	$\bm{\beta} = [x_2,x_1]$, state $ y(x_1,x_2) = \sin(\pi x_1) $, dual state $ z(x_1,x_2) = \sin(\pi x_1)\sin(\pi x_2)$
	\begin{table}
		\begin{center}
			\begin{tabular}{|c|c|c|c|c|c|}
				\hline
				$h/\sqrt 2$ &$1/16$& $1/32$&$1/64$ &$1/128$ & $1/256$ \\
				\hline
				$\norm{\bm{q}-\bm{q}_h}_{0,\Omega}$ &1.7074e-01   &9.5848e-02   &5.1838e-02   &2.7156e-02   &1.3929e-02 \\
				\hline
				order&-& 0.83   &0.89   &0.93   &0.96\\
				\hline
				$\norm{\bm{p}-\bm{p}_h}_{0,\Omega}$& 2.5679e-01   &1.4404e-01   &7.7454e-02   &4.0391e-02   &2.0661e-02 \\
				\hline
				order&-& 083   &0.90   &0.94   &0.97 \\
				\hline
				$\norm{{y}-{y}_h}_{0,\Omega}$&2.4537e-02   &1.4150e-02   &7.9032e-03   &4.2273e-03   &2.1935e-03\\
				\hline
				order&-&0.79   &0.84   &0.90 &0.95\\ 
				\hline
				$\norm{{z}-{z}_h}_{0,\Omega}$& 2.8293e-02   &1.8369e-02   &1.0747e-02   &5.8549e-03   &3.0618e-03 \\
				\hline
				order&-& 0.62&0.77&0.88& 0.94\\
				\hline
			\end{tabular}
		\end{center}
		\caption{Example \ref{example2}: Errors for the state $y$, adjoint state $z$, and the fluxes $\bm q$ and $\bm p$ when $k=0$.}\label{table_3}
	\end{table}

	\begin{table}
		\begin{center}
			\begin{tabular}{|c|c|c|c|c|c|}
				\hline
				$h/\sqrt 2$ &$1/8$& $1/16$&$1/32$ &$1/64$ & $1/128$ \\
				\hline
				$\norm{\bm{q}-\bm{q}_h}_{0,\Omega}$&1.0144e-02   &2.7469e-03   &7.1555e-04   &1.8271e-04   &4.6174e-05 \\
				\hline
				order&-& 1.88& 1.94  &1.97& 1.98\\
				\hline
				$\norm{\bm{p}-\bm{p}_h}_{0,\Omega}$ & 2.6378e-02   &6.8203e-03   &1.7358e-03   &4.3805e-04   &1.1004e-04 \\
				\hline
				order&-& 1.95&1.97&1.99 & 1.99 \\
				\hline
				$\norm{{y}-{y}_h}_{0,\Omega}$&1.8869e-03   &2.6762e-04   &3.5771e-05   &4.6297e-06   &5.8909e-07\\
				\hline
				order&-& 2.82&2.90&2.95 & 2.97 \\
				\hline
				$\norm{{z}-{z}_h}_{0,\Omega}$& 3.8001e-03   &5.2896e-04   &6.9919e-05   &8.9948e-06   &1.1409e-06 \\
				\hline
				order&-& 2.84&2.92&2.96& 2.98 \\
				\hline
			\end{tabular}
		\end{center}
		\caption{Example \ref{example2}: Errors for the state $y$, adjoint state $z$, and the fluxes $\bm q$ and $\bm p$ when $k=1$.}\label{table_4}
	\end{table}
\end{example}

\section{Conclusions}

In our earlier work \cite{HuShenSinglerZhangZheng_HDG_Dirichlet_control5}, we considered an HDG method with degree $ k $ polynomials for all variables to approximate the solution of an optimal distributed control problems for an elliptic convection diffusion equation.  We proved optimal convergence rates for all variables in \cite{HuShenSinglerZhangZheng_HDG_Dirichlet_control5} when $ \bm \beta $ is divergence free; however, we did not obtain superconvergence.  In this work, we considered the same control problem and approximated the solution using a different HDG method with degree $ k + 1 $ polynomials for the flux variables and degree $ k $ polynomials for the other variables.  When $ k > 0 $ and $ \nabla \cdot \bm \beta \leq 0 $, we obtained superconvergence for the control, state, and dual state, and optimal convergence rates for the fluxes.  We plan to consider HDG methods for more complicated optimal control problems for PDEs in the future.


\section*{Acknowledgements}  W.\ Hu was supported in part by a postdoctoral fellowship for the annual program on Control Theory and its Applications at the Institute for Mathematics and its Applications (IMA) at the University of Minnesota.  J.\ Singler and Y.\ Zhang were supported in part by National Science Foundation grant DMS-1217122.  J.\ Singler and Y.\ Zhang thank the IMA for funding research visits, during which some of this work was completed.  X.\ Zheng thanks Missouri University of Science and Technology for hosting him as a visiting scholar; some of this work was completed during his research visit.  The authors thank Weifeng Qiu for many helpful discussions.


\bibliographystyle{spmpsci}
\bibliography{yangwen_ref_papers,yangwen_ref_books}
\end{document}